\newcommand{\mathbbm}[1]{\text{\usefont{U}{bbm}{m}{n}#1}}
\newcommand{\be}{\beta}
\newcommand{\1}{{\bf 1}}
\newcommand{\xiti}{\tilde{\xi}}
\newcommand{\etati}{\tilde{\eta}}
\newcommand{\lati}{\tilde{\la}}
\newcommand{\yti}{\tilde{y}}
\newcommand{\Hti}{\tilde{H}}
\newcommand{\R}{\mathbb R}
\newcommand{\ca}{\mathcal A}
\newcommand{\cac}{\mathcal C}
\newcommand{\cd}{\mathcal D}
\newcommand{\cf}{\mathcal F}
\newcommand{\cj}{\mathcal J}
\newcommand{\cp}{\mathcal P}
\newcommand{\cw}{\mathcal W}
\newcommand{\al}{\alpha}
\newcommand{\ga}{\gamma}
\newcommand{\gga}{\Gamma}
\newcommand{\ka}{\kappa}
\newcommand{\la}{\lambda}
\newcommand{\si}{\sigma}
\newcommand{\vp}{\varphi}
\newtheorem{theorem}{Theorem}[section]
\newtheorem{corollary}[theorem]{Corollary}
\newtheorem{definition}[theorem]{Definition}
\newtheorem{lemma}[theorem]{Lemma}
\newtheorem{proposition}[theorem]{Proposition}
\theoremstyle{remark}
\newtheorem{remark}[theorem]{Remark}
    \pgfmathsetlength{\pgf@xb}{\pgfkeysvalueof{/pgf/outer xsep}}%  
    \pgfmathsetlength{\pgf@yb}{\pgfkeysvalueof{/pgf/outer ysep}}%  
\colorlet{symbols}{blue!90!black}
\colorlet{testcolor}{green!60!black}
\def\symbol#1{\textcolor{symbols}{#1}}
\def\1{\mathbf{\symbol{1}}}
\def\drawx{\draw[-,solid] (-3pt,-3pt) -- (3pt,3pt);\draw[-,solid] (-3pt,3pt) -- (3pt,-3pt);}
\tikzset{
	root/.style={circle,fill=testcolor,inner sep=0pt, minimum size=2mm},
	dot/.style={circle,fill=black,inner sep=0pt, minimum size=1mm},
	var/.style={circle,fill=black!10,draw=black,inner sep=0pt, minimum size=2mm},
	dotred/.style={circle,fill=black!50,inner sep=0pt, minimum size=2mm},
	generic/.style={semithick,shorten >=1pt,shorten <=1pt},
	dist/.style={ultra thick,draw=testcolor,shorten >=1pt,shorten <=1pt},
	testfcn/.style={ultra thick,testcolor,shorten >=1pt,shorten <=1pt,<-},
	testfcnx/.style={ultra thick,testcolor,shorten >=1pt,shorten <=1pt,<-,
		postaction={decorate,decoration={markings,mark=at position 0.6 with {\drawx}}}},
	kprime/.style={semithick,shorten >=1pt,shorten <=1pt,densely dashed,->},
	kprimex/.style={semithick,shorten >=1pt,shorten <=1pt,densely dashed,->,
		postaction={decorate,decoration={markings,mark=at position 0.4 with {\drawx}}}},
	kernel/.style={semithick,shorten >=1pt,shorten <=1pt,->},
	multx/.style={shorten >=1pt,shorten <=1pt,
		postaction={decorate,decoration={markings,mark=at position 0.5 with {\drawx}}}},
	kernelx/.style={semithick,shorten >=1pt,shorten <=1pt,->,
		postaction={decorate,decoration={markings,mark=at position 0.4 with {\drawx}}}},
	kernel1/.style={->,semithick,shorten >=1pt,shorten <=1pt,postaction={decorate,decoration={markings,mark=at position 0.45 with {\draw[-] (0,-0.1) -- (0,0.1);}}}},
	kernel2/.style={->,semithick,shorten >=1pt,shorten <=1pt,postaction={decorate,decoration={markings,mark=at position 0.45 with {\draw[-] (0.05,-0.1) -- (0.05,0.1);\draw[-] (-0.05,-0.1) -- (-0.05,0.1);}}}},
	kernelBig/.style={semithick,shorten >=1pt,shorten <=1pt,decorate, decoration={zigzag,amplitude=1.5pt,segment length = 3pt,pre length=2pt,post length=2pt}},
	rho/.style={dotted,semithick,shorten >=1pt,shorten <=1pt},
	renorm/.style={shape=circle,fill=white,inner sep=1pt},
	labl/.style={shape=rectangle,fill=white,inner sep=1pt},
	xi/.style={circle,fill=symbols!10,draw=symbols,inner sep=0pt,minimum size=1.2mm},
	xix/.style={crosscircle,fill=symbols!10,draw=symbols,inner sep=0pt,minimum size=1.2mm},
	xib/.style={circle,fill=symbols!10,draw=symbols,inner sep=0pt,minimum size=1.6mm},
	xibx/.style={crosscircle,fill=symbols!10,draw=symbols,inner sep=0pt,minimum size=1.6mm},
	not/.style={circle,fill=symbols,draw=symbols,inner sep=0pt,minimum size=0.5mm},
	>=stealth,
	}
\def\DeclareSymbol#1#2#3{\expandafter\gdef\csname MH@symb@#1\endcsname{\tikz[baseline=#2,scale=0.15,draw=symbols]{#3}}\expandafter\gdef\csname MH@symb@#1s\endcsname{\scalebox{0.7}{\tikz[baseline=#2,scale=0.15,draw=symbols]{#3}}}}
\def\<#1>{\csname MH@symb@#1\endcsname}
\newcommand{\luxor}{\mathbf{\Psi}}
\newcommand{\cherry}{\mathbf{\Psi}^{\mathbf{2}}}
\date{\today}
\title{Study of a fractional stochastic heat equation}
\numberwithin{equation}{section}
\begin{document}

\begin{center}
{\large\textbf{
Study of a fractional stochastic heat equation}}\\~\\ Nicolas Schaeffer\footnote{Institut \'Elie Cartan, Universit\' e de Lorraine, BP 70239, 54506 Vandoeuvre-l\`es-Nancy, France. }
\end{center}

\smallskip
{\small \noindent {\bf Abstract.}}
In this article, we study a $d$-dimensional stochastic  nonlinear heat equation (SNLH) with a quadratic nonlinearity, forced by a fractional space-time white noise:
\begin{equation*}
\left\{
\begin{array}{l}
\partial_t u-\Delta u= \rho^2 u^2 + \dot B \, , \quad  t\in [0,T] \, , \, x\in \R^d \, ,\\
u_0=\phi\, .
\end{array}
\right.
\end{equation*}
Two types of regimes are exhibited, depending on the ranges of the Hurst index $H=(H_0,...,H_d)$ $\in (0,1)^{d+1}$. In particular, we show that the local well-posedness of (SNLH) resulting from the Da Prato-Debussche trick, is easily obtained when $2H_0+\sum_{i=1}^{d}H_i >d$. On the contrary, (SNLH) is much more difficult to handle when $2H_0+\sum_{i=1}^{d}H_i \leq d$. In this case, the model has to be interpreted in the Wick sense, thanks to a time-dependent renormalization. Helped with the regularising effect of the heat semigroup, we establish local well-posedness results for (SNLH) for all dimension $d\geq1.$

\tableofcontents
\section{Introduction and main results}

\
\subsection{General introduction}
This paper is devoted to the study of a heat equation with a quadratic nonlinearity, driven by an additive fractional space-time white noise forcing. More precisely, we consider the following nonlinear stochastic heat model:
\begin{equation}\label{dim-d}
\left\{
\begin{array}{l}
 \partial_t u-\Delta u= \rho^2 u^2 + \dot B \, , \quad  t\in [0,T] \, , \, x\in \R^d \, ,\\
u(0,.)=\phi\, ,
\end{array}
\right.
\end{equation}
where $\phi$ is a (deterministic) initial condition living in some appropriate Sobolev space and $\rho$ : $\mathbb{R}^d \rightarrow~ \mathbb{R}$ is a smooth compactly-supported function fixed once and for all. Before going any further, let us specify the roughness of the stochastic term, namely, $\dot{B}:=\partial_t \partial_{x_1}...\partial_{x_d}B$ where $B=B^H$ is a space-time fractional Brownian motion of Hurst index $H=(H_0,H_1,...,H_d) ~\in (0,1)^{d+1}.$ The definition of this process is the following:

\smallskip

\begin{definition}\label{white}
Fix $d \geq 1$ a space dimension, $T\geq 0$ a positive time and $(\Omega,\mathcal{F},\mathbb{P})$ a complete filtered probability space. On this space, and for any fixed $H=(H_0,H_1,...,H_d) \in (0,1)^{d+1}$,  a centered Gaussian process $B:\Omega \times ([0,T]\times \R^d) \to \R$ is said to be a space-time fractional Brownian motion of Hurst index $H$ if its covariance function is given by the formula: 
$$\mathbb{E} \big[ B(s,x_1,...,x_d)B(t,y_1,...,y_d)\big] = R_{H_0}(s,t)\prod_{i=1}^{d} R_{H_i}(x_i,y_i) \, , \quad  s,t\in [0,T] \, , \ x,y\in \R^d \, ,$$
where
$$R_{H_i}(x,y):=\frac12 (|x|^{2H_i}+|y|^{2H_i}-|x-y|^{2H_i}) \ .$$
\end{definition}

Stochastic partial differential equations (SPDEs) perturbed by fractional noise terms have become increasingly popular in the recent years. Indeed, these equations can be interpreted as a more flexible alternative to classical SPDEs driven by a space-time white noise, and consequently they can be used to model  more complex physical phenomena which are subjects to random perturbations. For instance, fractional noises appear in lots of practical situations, whether in biophysics (see \cite{Kou}), in the study of financial time series (see \cite{bayraktar}) or simply in electrical
engineering (see \cite{denk}).

The major motivation of this work is that equation (\ref{dim-d}) is both a physical and mathematically challenging model. In fact, it can be seen as a stochastic reaction diffusion model with $p=2$. Recall that the equation:
\begin{equation}\label{diffusion}
 \partial_t u=\Delta u + u^p, \quad p>1
\end{equation}
 is known under the name of Fujita model and has been the subject of many questions. It has already been established (see \cite{brezis,weiss}) that when $\phi \in L^q(\mathbb{R}^d)$ with $q\geq1$ and $q > \frac{d(p-1)}{2}$, there exists a constant $T = T(\phi) > 0$ and a unique
function $u \in \mathcal{C}([0, T], L^q(\mathbb{R}^d))$ that is a classical solution to (1.1) on $[0, T]\times \mathbb{R}^d$ whereas when $q < \frac{d(p-1)}{2}$, there is no general theory of existence. In this work, we introduce a cut-off function $\rho$ to bring back computations to compact domains, a fractional random perturbation $\dot B$ and we are interested in the well-posedness of the associated equation (\ref{dim-d}).

Two classical obstacles have to be taken over. First of all, as often in the theory of SPDEs, a standard problem is the roughness of the fractional noise which prevents us from solving directly the associated linear part of the equation, namely:
\begin{equation}
 \left\{
    \begin{array}{ll}
  \partial_t\<Psi>-\Delta \<Psi> = \dot B, \hskip 0.3 cm t \in [0,T], \hspace{0,2cm} x \in \mathbb{R}^d,   \\
 \  \<Psi>(0,.)=0.
    \end{array}
\right.
\end{equation}
Then, the quadratic nonlinearity will require us to give a precise meaning to the notion of solution. Before going into details, let us recall that, over the last decade, we have seen numerous developments in the study of singular stochastic PDEs, in particular in the parabolic setting. Remember that there are three main components to consider: the space dimension $d$, the nature of the nonlinearity and the irregularity of the noise. For example, polynomial nonlinearities \cite[Section 3]{E} have been progressively replaced by sinusoidal and exponential ones. As an application of the theory of regularity structures, Hairer-Shen \cite{hairer-shen} and Chandra-Hairer-Shen \cite{chandra} studied the following parabolic sine-Gordon model on $\mathbb{T}^2$:

\begin{equation}\label{sine-Gordon}
\partial_t u = \frac{1}{2}\Delta u +\sin(\be u)+  \xi,
\end{equation}
where $\xi$ is a space-time white noise and proved that the local well-posedness depends essentially on the value of the key parameter $\be ^2 >0$.
Oh, Robert and Wand \cite{oh} focused on the parabolic equation in dimension $2$: 
\begin{equation}
\partial_t u +\frac{1}{2}(1-\Delta) u +\frac{1}{2}\lambda \be e^{\be u}=  \xi,
\end{equation}
where $\be, \lambda \in \mathbb{R} \backslash \{0\}$ and they proved that the local well-posedness depends again sensitively on the value of $\be^2>0$ as well as the sign of $\lambda$.
To end with, resorting to a paracontrolled approach, Oh and Okamoto \cite{oh-okamoto-2} worked on a stochastic heat equation with a quadratic nonlinearity but forced by a more irregular noise:
\begin{equation}\label{frac}
\partial_t u + (1-\Delta)u +u^2=\langle\nabla\rangle^{\alpha} \xi  ,
\end{equation}
where $\langle\nabla\rangle^{\alpha} \xi$ denotes a $\alpha$-derivative (in space) of a (Gaussian) space-time white noise on $\mathbb{T}^2\times \mathbb{R^+}$.
\smallskip
Our objective is to go one step further that is to study a stochastic heat equation on $\mathbb{R}^d$ (instead of $\mathbb{T}^d$) driven by the derivative (in space {\it and in time}) of a space-time fractional Brownian motion for $d\geq 1$.
Helped with the regularising effect of the heat semigroup which allows a gain of two derivatives in the fractional Sobolev setting, we will be able to derive existence results about (\ref{dim-d}). 

Our results can be summed up in the following way: 
 
\begin{theorem} 
Suppose that $d\geq 1$ and set $\alpha_H:=\big(2H_0+\sum_{i=1}^{d}H_i\big)-d$.
The picture below holds true:

\smallskip

\noindent
$(i)$ \textbf{Case $\alpha_H >0$.} The equation~\eqref{dim-d} is almost surely locally well-posed in $\mathcal{W}^{\beta,p}(\R^d)$ for some $\beta>0$ and $p\geq2$. 

\smallskip

\noindent
$(ii)$ \textbf{Case $\alpha_H \leq 0$.}  If $\alpha_H >-\frac{1}{4},$   then the equation~\eqref{dim-d}   can be interpreted in the Wick (renormalized) meaning and  it is    almost surely locally well-posed in $\mathcal{W}^{-\beta,p}(\R^d)$ for some $\beta>0$ and $p\geq2$. 
\end{theorem}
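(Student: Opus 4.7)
The plan is to use the Da Prato--Debussche decomposition $u=\luxo+v$, where $\luxo$ solves the linear stochastic heat equation
\[
\partial_t \luxo - \Delta \luxo = \dot B, \qquad \luxo(0,\cdot)=0,
\]
and $v$ is expected to be a more regular remainder. Then $v$ satisfies
\[
\partial_t v - \Delta v = \rho^2\lp \luxo^2 + 2\luxo\, v + v^2\rp
\]
in case (i), or the analogous equation with $\luxo^2$ replaced by the Wick square $\cherry$ in case (ii). The strategy is, in both cases, to give meaning to the purely stochastic object ($\luxo^2$ or $\cherry$), then close a fixed point for the mild formulation of the equation on $v$.

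First, using the explicit covariance of $B^H$ and the kernel of the heat semigroup, I would compute, for each multi-index $H$, the fractional Sobolev regularity of $\luxo$. The outcome should be that $\luxo(t,\cdot)\in\mathcal{W}^{s,p}(\R^d)$ almost surely on bounded intervals, with an exponent $s$ governed linearly by $\alpha_H$: positive when $\alpha_H>0$ and negative when $\alpha_H\leq 0$. In case (i), $\luxo^2$ is then defined pathwise via the algebra/Sobolev embedding properties of $\mathcal{W}^{\beta,p}$ for $\beta>d/p$, and one gets $\rho^2\luxo^2\in L^\infty_t \mathcal{W}^{\beta,p}_x$ with compact spatial support thanks to $\rho$. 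In case (ii), one introduces the time-dependent renormalization constant $\sigma_t:=\mathbb{E}[\luxo(t,x)^2]$ (which in the fractional setting genuinely depends on $t$) and defines $\cherry:=\luxo^2-\sigma_t$. A second-chaos computation combined with a Kolmogorov/Garsia-type argument should show $\cherry\in \mathcal{C}([0,T],\mathcal{W}^{-\beta,p}(\R^d))$ almost surely, for some $\beta>0$ as small as one needs provided $\alpha_H>-1/4$.

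Next I would rewrite the equation on $v$ in its mild form:
\[
v(t)=e^{t\Delta}\phi + \int_0^t e^{(t-s)\Delta}\lc \rho^2 \cherry(s) + 2\rho^2 \luxo(s) v(s) + \rho^2 v(s)^2\rc ds,
\]
and set up a Picard iteration in $X_T:=\mathcal{C}([0,T],\mathcal{W}^{\gamma,p}(\R^d))$ for a suitable $\gamma$. The heat semigroup gains essentially two derivatives in the fractional Sobolev scale (Schauder-type estimates), so $\int_0^t e^{(t-s)\Delta} \rho^2 \cherry(s)\,ds$ can be placed in $\mathcal{W}^{\gamma,p}$ with $\gamma\approx 2-\beta$ at the cost of a small positive power of $T$. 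The quadratic term $v^2$ closes by a standard algebra estimate once $\gamma>d/p$. The delicate term is the product $\luxo\, v$: in case (ii), $\luxo$ only lives in $\mathcal{W}^{-\beta,p}$, so this product has to be controlled via a multiplicative inequality of the form $\|fg\|_{\mathcal{W}^{-\beta,p}}\lesssim\|f\|_{\mathcal{W}^{-\beta,p}}\|g\|_{\mathcal{W}^{\gamma,p}}$, valid for $\gamma>\beta+d/p$ up to adjusting $p$. Choosing $\gamma$ and $p$ to satisfy simultaneously the algebra constraint, the Schauder gain constraint and the product constraint is where the assumption $\alpha_H>-1/4$ becomes essential.

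The main obstacle is therefore threefold and concentrated in case (ii): constructing $\cherry$ with the time-dependent renormalization and controlling it in a $p$-integrable sense (this will require a stochastic estimate on the second-chaos component using the explicit covariance of $\dot B^H$), then propagating enough regularity through the Schauder-type smoothing of the heat semigroup, and finally handling the mixed product $\rho^2\luxo v$ via a paraproduct/multiplication inequality. Once the regularity indices are tuned so that all three constraints are met under $\alpha_H>-1/4$, a standard contraction argument on a small time interval $[0,T]$ (with $T$ chosen a posteriori depending on the norm of the initial datum $\phi$ and on the random norms of $\luxo$ and $\cherry$) delivers the almost sure local existence and uniqueness of $v$, hence of the Wick solution $u=\luxo+v$.
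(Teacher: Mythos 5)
Your overall strategy coincides with the paper's: Da Prato--Debussche decomposition $u=\luxo+v$, construction of the linear object and of its Wick square in the second chaos (via the harmonizable representation of $B^H$ together with a Kolmogorov/Garsia--Rodemich--Rumsey argument), then a Picard iteration exploiting the Schauder gain of the heat semigroup. You also correctly locate the origin of the $-\frac14$ barrier in the stochastic estimate for the Wick square rather than in the deterministic analysis; in the paper this is the condition $4\alpha<1$ needed to integrate $\{1+\rho^2(1-r)^2\}^{-2\alpha}$ near $r=1$ in the proof of Proposition~\ref{sto1}, while the deterministic fixed point by itself would tolerate larger $\alpha$. (A small imprecision: the Wick square only lands in $\mathcal{W}^{-\beta,p}$ for $\beta>-2\alpha_H$, not for ``$\beta$ as small as one needs''.)

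There is, however, one step in case $(ii)$ that fails as written: the choice of the fixed-point space $X_T=\mathcal{C}([0,T];\mathcal{W}^{\gamma,p}(\R^d))$ with $\gamma>0$. The initial datum $\phi$ lies only in a negative-order space $\mathcal{W}^{-\beta,p}(\R^d)$, so $e^{t\Delta}\phi$ blows up in $\mathcal{W}^{\gamma,p}$ as $t\to 0$ and does not belong to your $X_T$; the iteration cannot even start. Moreover the terms $\int_0^t e^{(t-\tau)\Delta}(\rho^2 v_\tau^2)\,d\tau$ and $\int_0^t e^{(t-\tau)\Delta}(\rho v_\tau\cdot\luxo_\tau)\,d\tau$ require $v_\tau$ to have positive regularity for $\tau>0$, while only a negative-order bound can hold uniformly down to $\tau=0$. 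The paper's fix is the two-tier space $X^{\alpha,\beta,p}(T)=\mathcal{C}([0,T];\mathcal{W}^{-\alpha,p})\cap\mathcal{C}((0,T];\mathcal{W}^{\beta,p})$ equipped with the weighted seminorm $\sup_{0<t\leq T}t^{\frac{\beta+\alpha}{2}}\|v(t)\|_{\mathcal{W}^{\beta,p}}$: the weight turns the small-time singularity into an integrable factor $\tau^{-\frac{\beta+\alpha}{2}}$ in the Duhamel integrals and still yields the positive power of $T$ needed for the contraction (Proposition~\ref{last}). You should also record the compatibility constraints $\alpha<\beta<\min\big(2-\alpha-\frac{d}{p},\,2-2\alpha\big)$ under which the product lemma, the Schauder gain and the time-weight bookkeeping close simultaneously. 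With this amendment your argument matches the paper's proofs of Theorems~\ref{resu} and~\ref{resu1}.
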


Let us now come back to the analysis of equation (\ref{dim-d}). We first rewrite it under the mild formulation, that is the (formal) equation
\begin{equation}
    u_t=e^{t\Delta} \phi +\int_0^t e^{(t-\tau)\Delta}(\rho^2 u_{\tau}^2)d\tau +\<Psi>_t
\end{equation}
where $e^{t\Delta}$ stands for the heat semigroup, and (morally) $\<Psi>_t:=\int_0^t e^{(t-\tau)\Delta}(\dot B_{\tau})d\tau.$ Here, we adopt Hairer's convention (see \cite{hai-14}) to denote the stochastic terms by trees; the vertex $\<circle>$ in $\<Psi>$ represents the random noise $\dot B$ and the edge corresponds to the Duhamel integral operator:
$$\mathcal{I}=(\partial_t -\Delta)^{-1}.$$
Now, in order to isolate the more irregular term $\<Psi>$, we resort to the so-called Da Prato-Debussche trick to rewrite the equation under the form:
\begin{multline}
v_t=e^{t\Delta}\phi+\int_0^t e^{(t-\tau)\Delta}(\rho^2 v_{\tau}^2)d\tau+2\int_0^t e^{(t-\tau)\Delta}((\rho v_{\tau})\cdot(\rho \<Psi>_{\tau}))\, d\tau\\
+\int_0^t e^{(t-\tau)\Delta}(\rho^2 \<Psi>_{\tau}^2)\, d\tau\, , \quad t\in [0,T] \, ,
\end{multline}
where $v:=u-\<Psi>$.
Our first and main objective is to define properly the solution $\<Psi>$ related to the linear equation:
\begin{equation}\label{linear}
 \left\{
    \begin{array}{ll}
  \partial_t\<Psi>-\Delta \<Psi> = \dot B, \hskip 0.3 cm t \in [0,T], \hspace{0,2cm} x \in \mathbb{R}^d,   \\
 \  \<Psi>(0,.)=0.
    \end{array}
\right.
\end{equation}
Our strategy is to consider $(\dot B_n)_{n  \geq 0}$ a smooth approximation of $\dot B$ and to show that the associated solutions $(\<Psi>_n)_{n  \geq 0}$ verifying
\begin{equation}
 \left\{
    \begin{array}{ll}
  \partial_t\<Psi>_n-\Delta \<Psi>_n = \dot B_n\, , \hskip 0.3 cm t \in [0,T]\, , \hspace{0,2cm} x \in \mathbb{R}^d\, ,   \\
 \  \<Psi>_n(0,.)=0\, ,
    \end{array}
\right.    
\end{equation}
form a Cauchy sequence in a convenient subspace (see Section \ref{stochastic} for more details). This leads us to the construction of the first order stochastic process $\<Psi>$.

\begin{proposition}\label{sto}
Suppose that $d\geq 1$. For all $(H_0,H_1,...,H_d)\in (0,1)^{d+1}$ and every  test (i.e., smooth compactly-supported) function $\chi:\mathbb{R}^d$ $\rightarrow$ $\mathbb{R}$, almost surely the sequence $(\chi \<Psi>_n)_{n  \geq 0}$ converges in the space $\mathcal{C}([0,T]; \mathcal{W}^{-\al,p}(\mathbb{R}^d))$ as soon as $2\leq p\leq \infty$ and
\begin{equation}\label{assump-gene-al}
\alpha > d-\bigg(2H_0+\sum_{i=1}^{d}H_i\bigg)=-\al_H\, .
\end{equation}
The limit of this sequence will be denoted by $\chi \<Psi>$.
\end{proposition}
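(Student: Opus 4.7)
The strategy is the now-classical one for first-chaos constructions: show that $(\chi\<Psi>_n)_{n\geq 0}$ is Cauchy in $L^q(\Omega;\mathcal{C}([0,T];\mathcal{W}^{-\alpha,p}(\R^d)))$ for all finite $q\geq 2$, and then pass to an almost-sure statement by Borel--Cantelli along an exponentially growing subsequence. Since $\chi(\<Psi>_n-\<Psi>_m)(t,x)$ is a linear functional of $\dot B$, it belongs to the first Wiener chaos for every $(t,x)$; by Gaussian hypercontractivity (Kahane--Khintchine) one has $\mathbb E|Y|^{q}\lesssim_q(\mathbb E|Y|^2)^{q/2}$, so the whole problem reduces to a sharp second-moment bound (obtained by spectral methods) together with a matching time-increment estimate that can be fed into Kolmogorov's continuity criterion.

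For the variance I use the spectral representation of the noise: $\dot B$ is realized as the Wiener integral
\[
\dot B(t,x)=c_H\int_{\R^{d+1}} e^{i(t\tau+x\cdot\xi)}\,|\tau|^{\frac{1-2H_0}{2}}\prod_{i=1}^{d}|\xi_i|^{\frac{1-2H_i}{2}}\,\widehat W(d\tau,d\xi)
\]
against a complex white noise $\widehat W$ on $\R^{d+1}$. Solving the linear equation pointwise in $\xi$ yields the spatial Fourier transform of $\<Psi>_n(t,\cdot)$ as a Wiener integral with kernel $K_t(\tau,\xi)\psi_n(\tau,\xi)$, where $\psi_n$ is the Fourier cutoff and $K_t(\tau,\xi)=(e^{it\tau}-e^{-t|\xi|^2})/(i\tau+|\xi|^2)$ obeys $|K_t(\tau,\xi)|^2\lesssim(\tau^2+|\xi|^4)^{-1}$; together with $\int_{\R}|\tau|^{1-2H_0}(\tau^2+|\xi|^4)^{-1}d\tau\asymp|\xi|^{-4H_0}$, this integrates out the time-frequency. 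Multiplying by $\chi$ and applying Plancherel then reduces $\mathbb E\|\chi\<Psi>_n(t,\cdot)\|_{\mathcal W^{-\alpha,2}}^{2}$ to a template integral of the form
\[
\int_{\R^{2d}}(1+|\eta|^2)^{-\alpha}|\hat\chi(\eta-\xi)|^{2}|\xi|^{-4H_0}\prod_{i=1}^{d}|\xi_i|^{1-2H_i}\,d\xi\,d\eta,
\]
which, thanks to the Schwartz decay of $\hat\chi$ effectively freezing $\eta\sim\xi$, is finite if and only if $\alpha>d-(2H_0+\sum_{i}H_i)=-\alpha_H$. Cauchyness in $L^2(\Omega;\mathcal W^{-\alpha,2})$ is then immediate from $(\psi_n-\psi_m)^2\to0$ and dominated convergence.

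To pass to general $p\geq 2$, I work inside $\mathbb E\|g\|_{\mathcal W^{-\alpha,p}}^{p}=\int_{\R^d}\mathbb E|(1-\Delta)^{-\alpha/2}g(x)|^{p}dx$: hypercontractivity applied \emph{pointwise in $x$} reduces this to $\int_{\R^d}(\mathbb E|(1-\Delta)^{-\alpha/2}g(x)|^{2})^{p/2}dx$, and the compact support of $\chi$ combined with the integrability and rapid decay of the Bessel kernel keeps the $x$-integral bounded (a small loss of regularity here is absorbed in the strict inequality \eqref{assump-gene-al}). For time regularity, the same spectral machinery applied to $\chi(\<Psi>_n(t,\cdot)-\<Psi>_n(s,\cdot))$ produces an extra factor $|t-s|^{\gamma}$ for some $\gamma>0$, and Kolmogorov's continuity criterion in the Banach space $\mathcal W^{-\alpha,p}(\R^d)$ then delivers convergence in $\mathcal C([0,T];\mathcal W^{-\alpha,p}(\R^d))$. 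The main technical hurdle will be the sharp analysis of the template integral above: the weight $\prod_{i}|\xi_i|^{1-2H_i}$ is non-radial and, for $H_i>1/2$, singular on each coordinate hyperplane $\{\xi_i=0\}$, so a careful region decomposition (low versus high frequency, and a localization near each hyperplane) is needed to obtain the exact threshold $\alpha>-\alpha_H$ rather than a cruder sufficient condition, and similarly to track the low-frequency behaviour of $K_t$ so as to retrieve a positive Hölder exponent in $t$.
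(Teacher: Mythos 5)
Your plan follows essentially the same route as the paper's proof in Section~\ref{subsec:proof-sto}: the harmonizable representation produces the kernel $\gamma_t(\xi,r)=(e^{\imath\xi t}-e^{-r^2t})/(r^2+\imath\xi)$ with $|\gamma_t|^2\lesssim (r^4+\xi^2)^{-1}$, the time-frequency integration $\int_{\R}|\xi|^{1-2H_0}|\gamma_{s,t}(\xi,r)|^2\,d\xi$ is exactly Corollary~\ref{tec}, the $\chi$-localized spectral integral (handled by a hyperspherical change of variables despite the non-radial weight $\prod_i|\eta_i|^{1-2H_i}$) yields the threshold $\al>-\al_H$, and pointwise hypercontractivity, Kolmogorov/Garsia--Rodemich--Rumsey and Borel--Cantelli conclude just as in the paper. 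The only item you do not address is the endpoint $p=\infty$, which the paper recovers from the Sobolev embedding $\cw^{-\al+\frac{d}{p}+\eta,p}(\R^d)\subset\cw^{-\al,\infty}(\R^d)$ for small $\eta>0$.
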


It is now clear that the sign of $\al_H$ will play a major role in the resolution of ($\ref{dim-d}$). Indeed, when $\al_H >0$, $\chi \<Psi>(t)$ will be seen as a function defined on $\mathbb{R}^d$ whereas, when $\al_H<0$, $\chi \<Psi>(t)$ will be seen as a spatial distribution. It is important to remark that the precise nature of $\<Psi>$ is known up to multiplication by $\chi \in \cac_c^\infty(\R^d)$. To put it differently, the stochastic process $\<Psi>(t)$ is (almost surely) only a distribution on $\mathbb{R}^d$ but we have the more refined result : $\chi \<Psi> \in \mathcal{C}([0,T]; \mathcal{W}^{-\al,p}(\mathbb{R}^d))$ (see section \ref{subsec:glob-def} for more details). In the study of our equation, we will thus take $\chi=\rho$.

\smallskip

\subsubsection{The regular case}

\

\smallskip

In the following, we will call \emph{regular case} the situation where 
\begin{equation}\label{cond-regu-case}\tag{\textbf{H1}}
2H_0+\sum_{i=1}^{d}H_i >d \, .
\end{equation}
When this hypothesis is realized, $\al <0$ will be picked such that condition~\eqref{assump-gene-al} is satisfied, and therefore, resorting to Proposition~\ref{sto}, every element $\chi \<Psi>$ ($\chi\in \cac^\infty_c(\R^d)$) will be considered as a function of both time and space variables (with probability one) and consequently $\rho^2\<Psi>^2(t)$ will perfectly make sense as a classical function defined on $\mathbb{R}^d$. We thus propose the following natural interpretation of the model:

\begin{definition}\label{defi:sol-regu}
Let $d\geq 1$ and suppose that condition~\eqref{cond-regu-case} is verified. Remember that for all test function $\chi:\R^d \to \R$, $\chi\<Psi>$ is the process provided by Proposition~\ref{sto}.

\smallskip

A stochastic process $(u(t,x))_{t \in [0,T], x \in \mathbb{R}^d}$ is said to be a solution (on $[0,T]$) of equation~\eqref{dim-d} if, almost surely, the process $v:=u-\<Psi>$ is a solution of the mild equation
\begin{multline}
v_t=e^{t\Delta}\phi+\int_0^t e^{(t-\tau)\Delta}(\rho^2 v_{\tau}^2)d\tau+2\int_0^t e^{(t-\tau)\Delta}((\rho v_{\tau})\cdot(\rho \<Psi>_{\tau}))\, d\tau\\
+\int_0^t e^{(t-\tau)\Delta}(\rho^2 \<Psi>_{\tau}^2)\, d\tau\, , \quad t\in [0,T] \, .
\end{multline}
\end{definition}

\smallskip

We are now in a position to state our first existence result which will be a consequence of a standard fixed-point theorem:

\begin{theorem}[\textbf{Local well-posedness under~\eqref{cond-regu-case}}]\label{resu}
Suppose that $ d \geq 1$ and that condition~\eqref{cond-regu-case} is satisfied. Let $p\geq2$ and $\be$ be such that $0< \beta < 2H_0+\sum_{i=1}^{d}H_i-d$ and $\frac{d}{2p}<1+\frac{\be}{2}$. Finally, assume that $\phi \in \mathcal{W}^{\beta,p}(\mathbb{R}^d)$. In  this case, the assertions below hold true:

\smallskip

\noindent
$(i)$ Almost surely, there exists a time $T_0(\omega) >0$ such that equation~\eqref{dim-d} admits a unique solution~$u$ (in the meaning of Definition~\ref{defi:sol-regu}) in the subset 
$$\mathcal{S}_{T_0}:= \<Psi> + X^{\beta,p}(T_0),\hspace{0,5cm} \text{where} \ X^{\beta,p}(T_0):=\mathcal{C}([0,T_0]; \mathcal{W}^{\be,p}(\mathbb{R}^d)).$$

\smallskip

\noindent
$(ii)$ For any $n\geq 1$, let us note $u_n$ the \emph{smooth} solution of~\eqref{dim-d}, that is $u_n$ is the solution (in the meaning of Definition~\ref{defi:sol-regu}) related to $\rho \<Psi>_n$. Then, for all 
$$0< \beta < 2H_0+\sum_{i=1}^{d}H_i-d$$
and for any test function $\chi: \R^d \to \R$, the sequence $(\chi u_{n})_{n\geq 1}$ converges almost surely in $\mathcal{C}([0,T_0];\mathcal{W}^{\beta,p}(\mathbb{R}^d))$ to $\chi u$, where $u$ is the solution from item $(i)$.
\end{theorem}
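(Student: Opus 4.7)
The plan is to recast the problem as a fixed-point equation for $v := u - \<Psi>$ and run a Picard contraction argument on the Banach space $X^{\beta,p}(T_0) = \mathcal{C}([0,T_0];\mathcal{W}^{\beta,p}(\R^d))$. Concretely, I would introduce the map
$$\Gamma(v)_t := e^{t\Delta}\phi + \int_0^t e^{(t-\tau)\Delta}\bigl(\rho^2 v_\tau^2\bigr)\,d\tau + 2\int_0^t e^{(t-\tau)\Delta}\bigl((\rho v_\tau)(\rho \<Psi>_\tau)\bigr)\,d\tau + \int_0^t e^{(t-\tau)\Delta}\bigl(\rho^2 \<Psi>_\tau^2\bigr)\,d\tau.$$
Under~\eqref{cond-regu-case} and the constraint $0<\beta<\al_H$, Proposition~\ref{sto} (applied with $-\al=\beta$) yields, on an event of full probability, a finite random variable $M(\omega) := \sup_{t\in[0,T]}\|\rho\<Psi>_t\|_{\mathcal{W}^{\beta,p}}$. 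The inhomogeneous contribution is controlled uniformly in $t$ via $\|e^{t\Delta}\phi\|_{\mathcal{W}^{\beta,p}}\lesssim \|\phi\|_{\mathcal{W}^{\beta,p}}$.

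The three Duhamel terms are all handled by the same two ingredients: the heat-smoothing estimate $\|e^{s\Delta}g\|_{\mathcal{W}^{\beta,p}}\lesssim s^{-\mu}\|g\|_{L^q}$ with $\mu = \tfrac{d}{2}(\tfrac{1}{q}-\tfrac{1}{p})+\tfrac{\beta}{2}$, and the Sobolev embedding $\mathcal{W}^{\beta,p}\hookrightarrow L^{p^\ast}$ where $\tfrac{1}{p^\ast}=\tfrac{1}{p}-\tfrac{\beta}{d}$. The embedding places each of the quadratic products $v^2$, $v\cdot(\rho\<Psi>)$ and $(\rho\<Psi>)^2$ in $L^{p^\ast/2}$ via H\"older, using that multiplication by the smooth compactly supported function $\rho$ is bounded both on $\mathcal{W}^{\beta,p}$ and on every $L^{q}$. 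Choosing $q=p^\ast/2$ gives $\mu = \tfrac{d}{2p}-\tfrac{\beta}{2}$, so that $s^{-\mu}$ is integrable on $[0,T_0]$ precisely under the hypothesis $\tfrac{d}{2p}<1+\tfrac{\beta}{2}$, producing a time prefactor of order $T_0^{1-\mu}$. Assembling the resulting bounds one obtains
$$\|\Gamma(v)\|_{X^{\beta,p}(T_0)} \le \|\phi\|_{\mathcal{W}^{\beta,p}} + C\, T_0^{1-\mu}\bigl(\|v\|_{X^{\beta,p}(T_0)}^{2} + M\|v\|_{X^{\beta,p}(T_0)} + M^{2}\bigr),$$
together with a matching Lipschitz bound for $\Gamma(v)-\Gamma(w)$. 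Setting $R := 2\|\phi\|_{\mathcal{W}^{\beta,p}}+2M$ and choosing $T_0(\omega)>0$ sufficiently small (depending on $\omega$ through $M$), the map $\Gamma$ becomes a strict contraction of the closed ball of radius $R$ into itself, yielding the unique fixed point $v$ and hence the solution $u=v+\<Psi>$ claimed in $(i)$.

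For item $(ii)$, I would run exactly the same contraction estimates with $\<Psi>_n$ in place of $\<Psi>$ and track the difference. A standard stability argument for Picard iterations gives, on the common interval $[0,T_0(\omega)]$,
$$\|v_n-v\|_{X^{\beta,p}(T_0)} \lesssim \|\rho\<Psi>_n-\rho\<Psi>\|_{\mathcal{C}([0,T_0];\mathcal{W}^{\beta,p})},$$
and Proposition~\ref{sto} delivers the almost-sure convergence of the right-hand side to zero. Writing $\chi u_n = \chi v_n + \chi\<Psi>_n$ and invoking Proposition~\ref{sto} one more time for the stochastic component then yields $\chi u_n \to \chi u$ in $\mathcal{C}([0,T_0];\mathcal{W}^{\beta,p}(\R^d))$.

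The main obstacle, in my view, is the second step: producing a product estimate compatible with the heat smoothing in the full range allowed by $(\beta,p)$, in particular when $\beta p<d$ so that $\mathcal{W}^{\beta,p}$ is not an algebra. The trade-off between the integrability loss coming from the embedding into $L^{p^\ast/2}$ and the derivative gain recovered by the semigroup is exactly what produces the threshold $\tfrac{d}{2p}<1+\tfrac{\beta}{2}$ appearing in the hypothesis, showing in passing that this condition is essentially sharp for the scheme.
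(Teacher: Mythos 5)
Your proposal is correct and follows the same architecture as the paper: the Da Prato--Debussche decomposition $v=u-\<Psi>$, a Picard contraction for the map $\Gamma$ in $X^{\be,p}(T_0)=\mathcal{C}([0,T_0];\mathcal{W}^{\be,p}(\R^d))$ with a small power of $T_0$ in front of the quadratic terms, and, for item $(ii)$, the Lipschitz dependence of $\Gamma$ on $\rho\<Psi>$ combined with the almost sure convergence of $\chi\<Psi>_n$ from Proposition~\ref{sto}. The one place where you genuinely deviate is the product estimate (the paper's Lemma~\ref{techlemm}): the paper applies the Kato--Ponce fractional Leibniz rule to keep the product $f_1f_2$ at regularity $\be$, measured in $\mathcal{W}^{\be,q}$ with $\frac1q=\frac1p+\frac1r$ and $\mathcal{W}^{\be,p}\hookrightarrow L^r$, and then only uses the integrability-improving bound $\|e^{s\Delta}\cdot\|_{\mathcal{W}^{\be,p}}\lesssim s^{-\frac{d}{2}(\frac1q-\frac1p)}\|\cdot\|_{\mathcal{W}^{\be,q}}$ of Proposition~\ref{estimate}; you instead discard the derivatives entirely, place the product in $L^{p^\ast/2}$ by H\"older and Sobolev embedding, and recover the $\be$ derivatives from the parabolic smoothing at the extra cost $s^{-\be/2}$. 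The two bookkeepings give the identical singularity exponent $\frac{d}{2p}-\frac{\be}{2}=\frac{d}{2}\cdot\frac{1}{p^\ast}$ and hence the same threshold $\frac{d}{2p}<1+\frac{\be}{2}$; yours is marginally more elementary in that it avoids the fractional Leibniz rule (though you still implicitly need that multiplication by the smooth compactly supported $\rho$ is bounded on $\mathcal{W}^{\be,p}$, which is of the same nature), while the paper's version is the one that transfers directly to the rough case of Section~\ref{sec:irreg-case-det}, where the regularising effect of the semigroup must be reserved for upgrading negative regularity. The only minor points to tidy are the degenerate case $\be p\geq d$ of the embedding (where $p^\ast$ should be read as any finite exponent) and the justification that all $u_n$ live on a common interval $[0,T_0(\omega)]$ for $n$ large, which follows from the uniform bounds once $\rho\<Psi>_n\to\rho\<Psi>$; neither affects the validity of the argument.
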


\subsubsection{The rough case}

\

\smallskip

Let us now focus on the second situation, where 
\begin{equation}
2H_0+\sum_{i=1}^{d}H_i \leq d \, .
\end{equation}

\noindent
In this situation, $\rho \<Psi>(t)$ is only a distribution and not a function anymore. So the classical difficulty to properly define $\rho^2 \<Psi>^2(t)$ appears. A Wick-renormalization procedure permits to overcome this issue. To begin with, let us introduce the Wick-renormalized product 
\begin{equation}\label{defi:cerise}
\<Psi2>_n(t,x) := \<Psi>_n(t,x)^2-\sigma_n(t,x) \quad \text{where} \ \sigma_n(t,x):=\mathbb{E}\big[\<Psi>_n(t,x)^2\big] \, .
\end{equation}

\noindent
Before looking for a convenient subspace in which $(\rho^2 \<Psi2>_n)_{n\geq0}$ would be a Cauchy sequence, let us have a look at the renormalization constant.

\begin{proposition}\label{prop:renorm-cstt}
Let $d\geq 1$ and suppose that $2H_0+\sum_{i=1}^{d}H_i\leq d$. Then, the value of $\mathbb{E}\big[\<Psi>_n(t,x)^2\big] $ does not depend on $x$. Setting $\sigma_n(t):=\mathbb{E}\big[\<Psi>_n(t,x)^2\big]$, the asymptotic equivalence property below is obtained: for every $(t,x)\in (0,T]\times \R^d$,
\begin{equation}\label{estim-sigma-n}
    \sigma_n(t,x):=\mathbb{E}\big[\<Psi>_n(t,x)^2\big] \underset{n \rightarrow \infty}\sim  \left\{
    \begin{array}{ll}
		c_H^1\, n & \mbox{if} \ 2H_0+\sum_{i=1}^{d}H_i=d ,\\
        c_H^2\, 2^{2n(d-[2H_0+\sum_{i=1}^{d}H_i])} & \mbox{if} \ 2H_0+\sum_{i=1}^{d}H_i<d \, ,  
    \end{array}
\right.
\end{equation}
where $c_H^1,c_H^2>0$ are two constants.
\end{proposition}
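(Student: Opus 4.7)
The strategy is to compute $\sigma_n(t,x)$ explicitly via the spectral representation of the noise, which will simultaneously make the $x$-independence manifest and produce an integral amenable to asymptotic analysis.

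First, I would exploit the Fourier/spectral representation of $\dot B^H$: up to a normalising constant $c(H)>0$,
\begin{equation*}
\dot B(t,x) = \int_{\R\times\R^d} e^{it\lambda + i\langle x,\xi\rangle}\, |\lambda|^{1/2-H_0} \prod_{i=1}^d |\xi_i|^{1/2-H_i}\, \widehat{W}(d\lambda,d\xi),
\end{equation*}
with $\widehat W$ a complex Gaussian noise. Passing this representation through the heat semigroup via Duhamel (and computing $\int_0^t e^{i\tau\lambda-(t-\tau)|\xi|^2}\,d\tau$ explicitly), the regularised process obtained from $\dot B_n$, encoded by some Fourier cutoff $\chi_n$ concentrated on the parabolic frequency region $|\xi_i|\lesssim 2^n$, $|\lambda|\lesssim 2^{2n}$, reads
\begin{equation*}
\<Psi>_n(t,x) = \int e^{i\langle x,\xi\rangle}\, \frac{e^{it\lambda}-e^{-t|\xi|^2}}{i\lambda+|\xi|^2}\, \chi_n(\lambda,\xi)\, |\lambda|^{1/2-H_0} \prod_{i=1}^d |\xi_i|^{1/2-H_i}\, \widehat W(d\lambda,d\xi).
\end{equation*}
The Wiener--It\^o isometry then delivers
\begin{equation*}
\sigma_n(t,x) = c(H)\int_{\R\times \R^d} \left|\frac{e^{it\lambda}-e^{-t|\xi|^2}}{i\lambda+|\xi|^2}\right|^{2} |\chi_n(\lambda,\xi)|^2\, |\lambda|^{1-2H_0}\prod_{i=1}^{d} |\xi_i|^{1-2H_i}\, d\lambda\, d\xi,
\end{equation*}
whose integrand is independent of $x$; this settles the first assertion.

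For the asymptotics, I would establish the two-sided bound $\bigl|(e^{it\lambda}-e^{-t|\xi|^2})/(i\lambda+|\xi|^2)\bigr|^2 \asymp (\lambda^2+|\xi|^4)^{-1}$ on the high-frequency set $\{t|\xi|^2\gtrsim 1\}$ (upper bound from $|e^{it\lambda}-e^{-t|\xi|^2}|\le 2$; lower bound from the reverse triangle inequality together with $e^{-t|\xi|^2}\le e^{-1}$, giving $|e^{it\lambda}-e^{-t|\xi|^2}|\ge 1-e^{-1}$). The complementary low-frequency contribution is $n$-independent, so the problem reduces to the asymptotic size of
\begin{equation*}
I_n := \int_{D_n} \frac{|\lambda|^{1-2H_0}\prod_{i=1}^d |\xi_i|^{1-2H_i}}{\lambda^2+|\xi|^4}\, d\lambda\, d\xi, \qquad D_n=\{|\lambda|\lesssim 2^{2n},\ |\xi_i|\lesssim 2^n\}.
\end{equation*}
The parabolic substitution $\lambda=|\xi|^2\mu$ integrates out $\lambda$ to $|\xi|^{-4H_0}\int_{\R}\tfrac{|\mu|^{1-2H_0}}{\mu^2+1}\,d\mu$, a positive finite constant $c(H_0)$ thanks to $H_0\in(0,1)$ (the $\mu$-truncation becomes harmless once $|\xi|\lesssim 2^n$).

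It remains to evaluate $I_n\asymp c(H_0)\int_{|\xi_i|\lesssim 2^n} |\xi|^{-4H_0}\prod_{i=1}^d |\xi_i|^{1-2H_i}\,d\xi$. Decomposing this integral into dyadic shells $|\xi|\sim 2^k$ and rescaling $\xi=2^k\eta$ on each, the shell integral equals $c\cdot 2^{2k(d-2H_0-\sum_i H_i)}=c\cdot 2^{-2k\al_H}$ for a positive $H$-dependent constant $c$. Summing over $0\le k\le n$ gives
\begin{equation*}
I_n \sim \begin{cases} c_H^1\, n & \text{if } \al_H=0, \\ c_H^2\, 2^{-2n\al_H} = c_H^2\, 2^{2n(d-[2H_0+\sum_{i=1}^d H_i])} & \text{if } \al_H<0, \end{cases}
\end{equation*}
which is exactly~\eqref{estim-sigma-n}. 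The main technical point is the clean two-sided control of the kernel ratio uniformly in $n$ together with the verification that all subdominant contributions (the low-frequency region and the $\mu$-truncation tails) remain negligible with respect to the diverging dyadic sum; once this is ensured, the parabolic dyadic scaling delivers the logarithmic-versus-polynomial dichotomy automatically.
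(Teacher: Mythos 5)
Your starting point coincides with the paper's: both routes pass through the harmonizable representation to write $\sigma_n(t)$ as the explicit integral of
$|\gamma_t(\xi,|\eta|)|^2=\frac{1-2\cos(\xi t)e^{-|\eta|^2t}+e^{-2|\eta|^2t}}{|\eta|^4+\xi^2}$
against $|\xi|^{1-2H_0}\prod_{i}|\eta_i|^{1-2H_i}$ over the truncated frequency domain (which settles the $x$-independence), and both then reduce to a radial integral by hyperspherical coordinates and a parabolic scaling of the time frequency. The divergence comes at the asymptotic step, and that is where there is a genuine gap. The proposition asserts an asymptotic equivalence $\sigma_n(t)\sim c_H^1\,n$ (resp. $\sim c_H^2\,2^{2n\kappa}$), not a two-sided estimate. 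Your key reduction replaces the kernel by $(\lambda^2+|\xi|^4)^{-1}$ via the bounds $|e^{it\lambda}-e^{-t|\xi|^2}|\le 2$ and $\ge 1-e^{-1}$; this only yields $\sigma_n(t)\asymp I_n$ with an upper-to-lower constant ratio of order $4/(1-e^{-1})^2$, hence $\sigma_n(t)\asymp n$ (resp. $\asymp 2^{2n\kappa}$) and no asymptotic equivalence for $\sigma_n$ itself, however precisely you then evaluate the model integral $I_n$. To close the gap you must show that the numerator $1-2\cos(t\lambda)e^{-t|\xi|^2}+e^{-2t|\xi|^2}$ tends to $1$ on the region that actually carries the divergence and that the complement contributes $o$ of the main term. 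This is exactly what the paper does: an exact change of variables $(r,\xi)\mapsto(r/\xi,\xi)$ followed by dominated convergence when $\kappa>0$, and, when $\kappa=0$, an explicit computation showing that the derivative of the rescaled integral satisfies $f'(T)\sim c/T$, whence $f(T)\sim c\ln T$.

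A second, more localized inaccuracy concerns your claim that the $\mu$-truncation is harmless. In the geometric case $\kappa>0$ the integral is dominated by the top shell $|\eta|\sim 2^n$, where the time-frequency cutoff $|\xi|\le 4^n$ sits exactly at scale $|\eta|^2$; after the substitution $\lambda=|\eta|^2\mu$ the $\mu$-integral on that shell is truncated at $|\mu|\lesssim 1$ and does not converge to the full constant $c(H_0)$. This changes the constant of the dominant shell (though not the growth rate $2^{2n\kappa}$), so your shell constants must be allowed to depend on $n-k$ and the geometric sum re-examined; as written, the argument silently assigns the wrong constant precisely to the shells that matter. Both defects are repairable within your framework, but neither repair is present in the proposal, and together they are the entire content of the equivalence claimed in \eqref{estim-sigma-n}.
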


\begin{remark}
It is interesting to note that the nature of the equivalent (that is linear when $ 2H_0+\sum_{i=1}^{d}H_i=d$ and geometric when $2H_0+\sum_{i=1}^{d}H_i<d $) is the same as in the wave setting (see \cite{deya-wave}) and in the Schr{\"o}dinger setting (see \cite{deya3}). Let us add that, in this case, the term in the right hand-side of (\ref{estim-sigma-n}) has no dependence on $t$. In fact, this results from a dissipative phenomenon of the heat equation (see the proof of Proposition \ref{equi}) contrary to an oscillating one in the case of the Schr{\"o}dinger equation (see \cite{deya3}). This kind of phenomenon is well-known in the parabolic setting and has already been observed in \cite{hoshino} for instance in its study of the KPZ equation with fractional derivatives of white noise.
\end{remark}

\noindent
Our definition of $\<Psi2>$, the second order stochastic term, is the following:
\

\begin{proposition}\label{sto1}
Fix $d\geq 1$ and $(H_0,H_1,...,H_d)\in (0,1)^{d+1}$ such that
\begin{equation}\label{cond-hurst-psi-2}\tag{\textbf{H2}}
d-\frac{1}{4}< 2H_0+\sum_{i=1}^{d}H_i \leq d\, .
\end{equation}
Then, for all test function $\chi:\mathbb{R}^d\rightarrow\mathbb{R}$, almost surely the sequence $(\chi^2\<Psi2>_n)_{n \geq 1}$ (defined by \eqref{defi:cerise}) converges in the space $\mathcal{C}([0,T]; \mathcal{W}^{-2\al,p}(\mathbb{R}^d))$ as soon as $2\leq p\leq \infty$ and $\al$ satisfying \eqref{assump-gene-al}.

\smallskip

The limit of this sequence will be denoted by $\chi^2 \<Psi2>$.
\end{proposition}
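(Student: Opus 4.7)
The approach I would take is to prove that, for all sufficiently large even integers $q$, the sequence $(\chi^2 \<Psi2>_n)_{n\geq 1}$ is Cauchy in $L^q(\Omega;\mathcal{C}([0,T];\mathcal{W}^{-2\alpha,p}(\R^d)))$, and then upgrade this to almost-sure convergence via a Borel--Cantelli argument along a dyadic subsequence. The first key reduction is that, at each fixed $(t,x)$, the random variable
$$\<Psi2>_n(t,x) - \<Psi2>_m(t,x) = \bigl(\<Psi>_n(t,x)^2 - \sigma_n(t)\bigr) - \bigl(\<Psi>_m(t,x)^2 - \sigma_m(t)\bigr)$$
lies in the second homogeneous Wiener chaos generated by $\dot B$. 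By Nelson's hypercontractivity inequality, all its $L^q(\Omega)$ norms are controlled, up to a constant depending only on $q$, by the $L^2(\Omega)$ norm. I would then use the Bessel-potential characterization $\|f\|_{\mathcal{W}^{-2\alpha,p}}=\|(1-\Delta)^{-\alpha}f\|_{L^p}$ together with Minkowski's inequality and Fubini to reduce the claim to a uniform-in-$(t,x)$ bound on
$$\mathbb{E}\bigl[\bigl|(1-\Delta)^{-\alpha}\bigl(\chi^2(\<Psi2>_n - \<Psi2>_m)\bigr)(t,x)\bigr|^{2}\bigr]$$
with a quantitative rate of decay as $n,m\to\infty$. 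Since $\chi$ is compactly supported, the $x$-integration is effectively restricted to a bounded region, so a uniform-in-$x$ second-moment estimate suffices.

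The heart of the argument is the resulting diagrammatic bound. Using the spectral representation of $\dot B$ together with the Wick/isometry identity
$$\mathbb{E}\bigl[\<Psi2>_n(t,x)\,\<Psi2>_n(t,y)\bigr] = 2\,\mathbb{E}\bigl[\<Psi>_n(t,x)\,\<Psi>_n(t,y)\bigr]^{2},$$
the second moment reduces to a convolution integral whose Fourier-side weights combine the noise covariance factors $|\tau|^{1-2H_0}\prod_{i=1}^{d}|\xi_i|^{1-2H_i}$ (appearing twice), two heat-kernel damping factors, the smooth convolution by $\widehat{\chi^2}$, and a Bessel multiplier of order $-2\alpha$. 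I expect condition (\textbf{H2}), i.e.\ $2H_0+\sum_i H_i > d-\tfrac14$, to be precisely the sharp integrability threshold that makes this integral finite. For the Cauchy estimate, inserting the difference of the two frequency regularizations introduces a factor decaying as a negative power of $2^{\min(n,m)}$; splitting the domain of integration into low- and high-frequency regions would then yield a summable rate in $(n,m)$.

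To promote pointwise-in-time bounds to continuity in $t$, I would run the analogous computation for $\<Psi2>_n(t)-\<Psi2>_n(s)$, replacing one heat-kernel factor by the increment $K(t-u,\cdot)-K(s-u,\cdot)$ and exploiting standard H\"older-in-time bounds for the semigroup, to obtain
$$\mathbb{E}\bigl[\|\chi^2(\<Psi2>_n(t)-\<Psi2>_n(s))\|_{\mathcal{W}^{-2\alpha,p}}^{q}\bigr] \lesssim |t-s|^{\theta q}$$
for some $\theta>0$. Choosing $q$ large enough that $\theta q > 1$, Kolmogorov's continuity criterion together with Borel--Cantelli along a dyadic subsequence yields the claimed almost-sure convergence in $\mathcal{C}([0,T];\mathcal{W}^{-2\alpha,p}(\R^d))$. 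The main obstacle, as is typical in such renormalization arguments, is the diagrammatic analysis in Step~3: carrying out the frequency-domain bookkeeping carefully enough to identify $-\tfrac14$ as the sharp threshold, a constraint which ultimately stems from the double singularity of the covariance weights near the origin in the Fourier variable.
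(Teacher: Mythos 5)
Your overall architecture coincides with the paper's: reduce to moment estimates for the second-chaos increment $\<Psi2>_{n,m}(t,\cdot)-\<Psi2>_{n,m}(s,\cdot)$, use the Wick/isometry identity together with Gaussian hypercontractivity to pass from $L^2(\Omega)$ to $L^{2p}(\Omega)$, and conclude by Kolmogorov/Garsia--Rodemich--Rumsey plus Borel--Cantelli (with a Sobolev embedding to reach $p=\infty$). This is exactly Steps 1--2 of the paper's proof, so at the level of strategy nothing is off.

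The genuine gap is that the decisive estimate --- the one that actually forces hypothesis (H2) --- is not carried out but only announced as the "main obstacle", and your heuristic for where the threshold $-\tfrac14$ comes from is not the right one. After the Wick formula, the fourth moment reduces to an integral over two frequency pairs $(\xi,\eta)$ and $(\xiti,\etati)$ carrying the Sobolev weight $\{1+|\eta-\etati|^2\}^{-2\al}$, and the crux is that this weight gives no decay on the near-diagonal region where $|\etati|\approx|\eta|$. The paper splits $(\R\times\R^d)^2$ into the off-diagonal set $D_1$ (handled by Cauchy--Schwarz, which reduces matters to the first-order computation of Proposition \ref{sto}) and the diagonal strip $D_2=\{\frac{|\eta|}{2}<|\etati|<\frac{3|\eta|}{2}\}$; on $D_2$ a hyperspherical change of variables produces the factor $\int_{1/2}^{3/2}(1-r)^{-4\al}\,dr$, finite precisely when $\al<\tfrac14$, while the remaining radial integral converges when $\al>d-(2H_0+\sum_i H_i)$. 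The conjunction of these two constraints is exactly (H2). The obstruction is thus a resonance between the two frequency magnitudes $|\eta|$ and $|\etati|$, not a "double singularity of the covariance weights near the origin in the Fourier variable"; without some such diagonal/off-diagonal decomposition the integral cannot be closed, so this is a missing idea rather than routine bookkeeping. A secondary omission: the statement allows any $\al>d-(2H_0+\sum_iH_i)$, including $\al\geq\tfrac14$; the paper covers that range by first proving convergence for some auxiliary $\al'<\tfrac14$ and then using $\cw^{-2\al',p}\subset\cw^{-2\al,p}$, a step your argument would also need to include.
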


\noindent
With the above constructed stochastic processes, the following \emph{Wick interpretation} of the model naturally comes to mind:

\begin{definition}\label{defi:sol}
Let $d\geq 1$ and assume that condition~\eqref{cond-hurst-psi-2} is verified. Remember that for all test function $\chi:\R^d \to \R$, $\chi\<Psi>$ and $\chi^2\<Psi2>$ are the processes defined in Proposition~\ref{sto} and Proposition~\ref{sto1}, respectively.

\smallskip
In this setting, a stochastic process $(u(t,x))_{t \in [0,T], x \in \mathbb{R}^d}$ is said to be a \emph{Wick} solution (on $[0,T]$) of equation~\eqref{dim-d} if, almost surely, the process $v:=u-\<Psi>$ is a solution of the mild equation
\begin{multline}\label{equa-v-rough}
v_t=e^{t\Delta}\phi+\int_0^t e^{(t-\tau)\Delta}(\rho^2 v_{\tau}^2)d\tau+2\int_0^t e^{(t-\tau)\Delta}((\rho v_{\tau})\cdot(\rho \<Psi>_{\tau}))\, d\tau\\
+\int_0^t e^{(t-\tau)\Delta}(\rho^2 \<Psi2>_{\tau})\, d\tau\, , \quad t\in [0,T] \, .
\end{multline}
\end{definition}

In this case, a major difficulty is the treatment of the term $(\rho v) \cdot (\rho \<Psi>)$ that will be understood as the
product of a distribution $\rho \<Psi>$ with a regular enough function $\rho v$. Indeed, should $\rho \<Psi>$ be of
Sobolev regularity $-\alpha$ and $\rho v$ be a function of Sobolev regularity $\beta$ with $\beta>\alpha$, $(\rho v) \cdot (\rho \<Psi>)$ can be defined as a distribution of order $-\alpha$ (see Section \ref{sec:irreg-case-det}  for more details). Again, the regularising effect of the heat semigroup will help us to find a stable subspace. The main result of this paper can be stated in the following way:

\begin{theorem}[\textbf{Local well-posedness under (H2)}]\label{resu1}
Suppose that $d\geq 1$ and $p\geq2$ verifies that $p\geq \frac{2d}{3}.$
Assume that $d-\frac{1}{4}<2H_0+\sum_{i=1}^{d}H_i\leq d$.
Fix $\al>0$ such that
\begin{equation}\label{alpha1}
d-\bigg(2H_0+\sum_{i=1}^{d}H_i\bigg)<\al<\frac{1}{4}.
\end{equation}
 Then the assertions below hold true:

\smallskip

\noindent
$(i)$ One can find $\be>0$ such that almost surely, for every $\phi \in \mathcal{W}^{-\alpha,p}(\mathbb{R}^d)$, there exists a time $T_0(\omega)>0$ for which equation~\eqref{dim-d} admits a unique Wick solution $u$ (in the meaning of Definition~\ref{defi:sol}) in the set
$$\mathcal{S}_{T_0}:= \<Psi> + X^{\alpha,\be}(T_0)\, ,$$
where
$$X^{\alpha,\be}(T_0):=\mathcal{C}([0,T_0]; \mathcal{W}^{-\alpha,p}(\mathbb{R}^d))\cap \mathcal{C}((0,T_0]; \mathcal{W}^{\be,p}(\mathbb{R}^d)) .$$

\smallskip

\noindent
$(ii)$ For any $n\geq 1$, let us note $\widetilde{u}_n$ the \emph{smooth} Wick solution of~\eqref{dim-d}, that is $\widetilde{u}_n$ is the solution (in the meaning of Definition~\ref{defi:sol}) related to the pair $(\rho \<Psi>_n,\rho^2\<Psi2>_n)$. Then, for all $\al$ satisfying~(\ref{alpha1}) and for any test function $\chi:\R^d \to \R$, the sequence $(\chi\widetilde{u}_{n})_{n\geq 1}$ converges almost surely in $\mathcal{C}([0,T_0];\mathcal{W}^{-\al,p}(\mathbb{R}^d))$ to $\chi u$, where $u$ is the Wick solution from item $(i)$.
\end{theorem}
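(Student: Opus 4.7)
The strategy is a contraction mapping argument for the Picard iteration
$$\Gamma(v)_t := e^{t\Delta}\phi + \int_0^t e^{(t-\tau)\Delta}(\rho^2 v_\tau^2)\, d\tau + 2\int_0^t e^{(t-\tau)\Delta}\bigl((\rho v_\tau)(\rho \Psi_\tau)\bigr)\, d\tau + \int_0^t e^{(t-\tau)\Delta}(\rho^2 \Psi^2_\tau)\, d\tau$$
on the closed ball of radius $R$ in $X^{\alpha,\beta}(T_0)$, endowed with the weighted norm
$$\|v\|_{X,T_0} := \sup_{t \in [0,T_0]} \|v(t)\|_{\mathcal{W}^{-\alpha,p}(\mathbb{R}^d)} + \sup_{t \in (0,T_0]} t^{(\alpha+\beta)/2}\|v(t)\|_{\mathcal{W}^{\beta,p}(\mathbb{R}^d)}.$$
The admissible window is dictated by two requirements: $\beta>\alpha$, so that the resonance in the product $(\rho v)(\rho \Psi)$ can be defined; and $\beta<2-2\alpha$, so that the semigroup integral of the purely stochastic source is time-integrable. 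The hypothesis $\alpha<1/4$ from \eqref{alpha1} is exactly what makes the interval $(\alpha,2-2\alpha)$ nonempty.

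I would carry out the four estimates in sequence. For the initial-data term the heat smoothing yields $\|e^{t\Delta}\phi\|_{\mathcal{W}^{-\alpha,p}} \leq \|\phi\|_{\mathcal{W}^{-\alpha,p}}$ and $\|e^{t\Delta}\phi\|_{\mathcal{W}^{\beta,p}} \lesssim t^{-(\alpha+\beta)/2}\|\phi\|_{\mathcal{W}^{-\alpha,p}}$, which already lies inside the ball uniformly in $T_0$. For the pure-stochastic term, Proposition~\ref{sto1} furnishes $\rho^2\Psi^2 \in \mathcal{C}([0,T];\mathcal{W}^{-2\alpha,p})$, so that
$$\Bigl\|\int_0^t e^{(t-\tau)\Delta}(\rho^2 \Psi^2_\tau)\,d\tau \Bigr\|_{\mathcal{W}^{\beta,p}} \lesssim \int_0^t (t-\tau)^{-(2\alpha+\beta)/2}\, d\tau \cdot \sup_{\tau \in [0,T_0]}\|\rho^2\Psi^2_\tau\|_{\mathcal{W}^{-2\alpha,p}},$$
producing a positive power of $T_0$ after multiplication by the weight, and similarly in $\mathcal{W}^{-\alpha,p}$.

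The heart of the proof is the mixed term, where $\rho\Psi_\tau$ is only a distribution. I would expand $(\rho v_\tau)(\rho \Psi_\tau)$ via a Bony paraproduct decomposition and invoke Sobolev multiplication: for $\alpha<\beta$ one obtains
$$\|(\rho v)(\rho \Psi)\|_{\mathcal{W}^{-\alpha,q}} \lesssim \|\rho v\|_{\mathcal{W}^{\beta,p}}\|\rho \Psi\|_{\mathcal{W}^{-\alpha,p}}$$
for a suitable $q \leq p$ controlled by Sobolev embedding of the resonance term. The assumption $p\geq 2d/3$ is precisely what allows this integrability loss to be recovered by the heat improvement $\|e^{t\Delta}\|_{L^q \to L^p}\lesssim t^{-d(1/q-1/p)/2}$ while still keeping the total exponent in $\tau$ strictly greater than $-1$. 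An analogous but simpler bound, using the algebra / multiplication properties for the product of two functions of regularity $\beta$, treats the quadratic Picard term $\int_0^t e^{(t-\tau)\Delta}(\rho^2 v_\tau^2)\, d\tau$. Adding the four contributions produces
$$\|\Gamma(v)\|_{X,T_0} \leq C_\phi + C_\Psi\, T_0^{\eta} + C\, T_0^{\eta}(R + R^2)$$
for some $\eta>0$, and the differences are controlled by the same multilinear estimates, so $\Gamma$ is a strict contraction on $B_R$ for $R$ large and $T_0=T_0(\omega)$ small. Uniqueness inside $\mathcal{S}_{T_0}$ follows by subtracting any two Wick solutions and running the same bounds through a Gronwall argument.

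For assertion (ii), the same contraction is carried out with $(\rho\Psi_n,\rho^2\Psi^2_n)$ in place of $(\rho\Psi,\rho^2\Psi^2)$ on a common time interval; Propositions~\ref{sto} and~\ref{sto1} give the almost-sure convergence of the approximate stochastic data in the ambient Sobolev spaces, and the multilinear bounds established above propagate this convergence to $\chi\widetilde u_n\to\chi u$ in $\mathcal{C}([0,T_0];\mathcal{W}^{-\alpha,p}(\mathbb{R}^d))$. The main obstacle throughout is the mixed term: one must simultaneously absorb a loss in Sobolev smoothness (forcing $\beta>\alpha$) and a loss in integrability (forcing $p\geq 2d/3$) using the single two-derivative budget provided by the heat semigroup, which in turn is what caps $\alpha$ at $1/4$.
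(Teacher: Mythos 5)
Your proposal follows essentially the same route as the paper: a Picard contraction for $\Gamma$ in the weighted space $\mathcal{C}([0,T_0];\mathcal{W}^{-\al,p})\cap\mathcal{C}((0,T_0];\mathcal{W}^{\be,p})$ with the weight $t^{(\al+\be)/2}$, the negative-positive Sobolev multiplication lemma for the mixed term, the heat-semigroup gain of regularity and integrability for the four contributions, and the stability of $\Gamma$ in the stochastic data combined with Propositions~\ref{sto} and~\ref{sto1} for item $(ii)$. The only inaccuracy is your attribution of the cap $\al<\tfrac14$ to the deterministic budget: the fixed point only needs $\al<\be<\min(2-\al-\tfrac{d}{p},\,2-2\al)$ (nonempty once $\al<1-\tfrac{d}{2p}$, which is where $p\geq\tfrac{2d}{3}$ enters), while the $\tfrac14$ threshold comes from the stochastic construction of $\<Psi2>$ in Proposition~\ref{sto1}, as the paper notes in Remark~\ref{rk:limitations}.
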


\

\begin{remark}
Thanks to the regularising effect of the heat semigroup,  we are able to solve equation (\ref{dim-d}) for all dimension $d\geq1$ as soon as $\al<\frac{1}{4}$. This situation highly contrasts with the Schr{\"o}dinger setting (see \cite{deya3}) where the restriction comes from the deterministic part of the equation. Nonetheless, although the restriction in Theorem \ref{resu1} comes from the stochastic constructions, we will be able to improve this result in dimension $d=2.$ See subsection \ref{irreg} below.
\end{remark}

\begin{remark}
Let us go back to the role of the cut-off function $\rho$ in equation (\ref{dim-d}). Contrary to the Schr{\"o}dinger setting where $\rho$ played an essential role to obtain local regularity, in our case, $\rho$ is only needed to measure precisely the regularity of the stochastic processes $\rho \<Psi>(t,.),$ $\rho^2 \<Psi2>(t,.)$ in some $\mathcal{W}^{\al,p}(\mathbb{R}^d)$ spaces.
\end{remark}

\subsubsection{Further study of the model when d=2}\label{irreg}

\

\smallskip

At this point of the analysis, one may wonder whether the restriction $2 H_{0}+\sum_{i=1}^{d} H_{i}>d-\frac{1}{4}$ in \eqref{cond-hurst-psi-2} (that comes from our technical computations) is optimal. The following proposition provides us with a first partial result in this direction.

\begin{proposition}\label{prop:limite-case}
Fix $d\geq1$. Let $(H_0,H_1,...,H_d)\in (0,1)^{d+1}$ be such that 
\begin{equation}\label{constrainte-h-i-opt}
2H_0+H_1+\cdots+H_d \leq \frac{3}{4}d
\end{equation}
and let $\chi:\R^d\to \R$ be a non-zero, smooth and compactly-supported function. Then, for all $\al >0$ and $t>0$, one has $\mathbb{E}\Big[ \|\chi\cdot \<Psi2>_n(t,.)\|_{H^{-2\al}(\R^d)}^2\Big] \to \infty$ as $n\to \infty$.
\end{proposition}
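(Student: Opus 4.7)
\emph{Plan.} My approach is to use the Wiener chaos structure of $\<Psi2>_n$ to rewrite the $L^2$-expected $H^{-2\alpha}$-norm as an explicit deterministic spectral integral, and then to derive its divergence via a dyadic-scaling lower bound on the self-convolution of the spectral density. Since $\<Psi2>_n(t,\cdot)=\<Psi>_n(t,\cdot)^2-\sigma_n(t)$ belongs to the second Wiener chaos, Isserlis' formula gives
\[
\mathbb{E}\bigl[\<Psi2>_n(t,x)\<Psi2>_n(t,y)\bigr]=2\,C_n(t,x,y)^2,\qquad C_n(t,x,y):=\mathbb{E}\bigl[\<Psi>_n(t,x)\<Psi>_n(t,y)\bigr].
\]
Combining this with the representation $\|f\|_{H^{-2\alpha}}^2=\int f(x)f(y)G_{4\alpha}(x-y)\,dx\,dy$, where $G_{4\alpha}$ is the Bessel kernel with Fourier transform $(1+|\xi|^2)^{-2\alpha}$, we obtain
\[
\mathbb{E}\Bigl[\|\chi\<Psi2>_n(t,\cdot)\|_{H^{-2\alpha}}^{2}\Bigr]=2\int_{\R^d}\int_{\R^d}\chi(x)\chi(y)\,G_{4\alpha}(x-y)\,C_n(t,x,y)^2\,dx\,dy.
\]

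\emph{Spectral form.} Choose the approximation $\dot B_n$ translation-invariant, so that $C_n(t,x,y)=D_n(t,x-y)$. Starting from $\widehat{\<Psi>_n}(t,\eta)=\int_0^t e^{-(t-s)|\eta|^2}\widehat{\dot B_n}(s,\eta)\,ds$ and the time-space spectral density $|\omega|^{1-2H_0}\prod_i|\eta_i|^{1-2H_i}$ of $\dot B$, a standard computation (using the scaling $\omega=|\eta|^2 u$ in $\int_\R|\omega|^{1-2H_0}/(|\eta|^4+\omega^2)\,d\omega$) yields
\[
K_n(t,\eta):=\widehat{D_n(t,\cdot)}(\eta)\,\asymp\,\prod_{i=1}^d|\eta_i|^{1-2H_i}\cdot|\eta|^{-4H_0}\,\mathbf{1}_{|\eta|\lesssim 2^n}\qquad\text{for }|\eta|\gtrsim 1.
\]
Substituting $z=x-y$ and applying Plancherel, together with the elementary pointwise bound $\bigl[(1+|\cdot|^2)^{-2\alpha}\ast|\hat\chi|^2\bigr](\zeta)\geq c_\chi(1+|\zeta|^2)^{-2\alpha}$ (a consequence of $\chi\neq 0$, so $|\hat\chi|^2$ has positive integral over any ball around $0$, and of $(1+|\zeta-\eta|^2)\lesssim(1+|\zeta|^2)$ for $|\eta|$ bounded), the identity of the previous step rewrites as
\[
\mathbb{E}\Bigl[\|\chi\<Psi2>_n(t,\cdot)\|_{H^{-2\alpha}}^2\Bigr]\,\gtrsim\,\int_{\R^d}(K_n\ast K_n)(t,\zeta)\,(1+|\zeta|^2)^{-2\alpha}\,d\zeta.
\]

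\emph{Dyadic-shell lower bound and conclusion.} Set $\gamma:=d-4H_0-2\sum_{i=1}^d H_i$, so $K_n$ is anisotropically homogeneous of total degree $\gamma$ on $\{1\lesssim|\eta|\lesssim 2^n\}$. For each dyadic scale $2^k$ with $1\leq k\lesssim n$, restricting the convolution to $\eta_1$ with $|\eta_1|\sim 2^k$ and each coordinate $|\eta_{1,i}|\sim 2^k$ (so that $K_n(\eta_1)\asymp(2^k)^\gamma$, and $K_n(\zeta-\eta_1)\asymp(2^k)^\gamma$ as well whenever $|\zeta|\leq 1$) yields
\[
(K_n\ast K_n)(t,\zeta)\,\gtrsim\,\sum_{k=1}^{\lfloor n/2\rfloor}(2^k)^{2\gamma+d}\qquad\text{for }|\zeta|\leq 1,
\]
the different shells contributing on disjoint regions. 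Inserting this and retaining only the contribution of the ball $|\zeta|\leq 1$ (on which $(1+|\zeta|^2)^{-2\alpha}\geq 2^{-2\alpha}$ for every $\alpha>0$), we conclude
\[
\mathbb{E}\Bigl[\|\chi\<Psi2>_n(t,\cdot)\|_{H^{-2\alpha}}^2\Bigr]\,\gtrsim\,\sum_{k=1}^{\lfloor n/2\rfloor}(2^k)^{2\gamma+d}.
\]
Under hypothesis~\eqref{constrainte-h-i-opt}, $2\gamma+d=3d-8H_0-4\sum_iH_i\geq 0$, so the right-hand side diverges: geometrically as $(2^n)^{2\gamma+d}$ when the inequality is strict, and logarithmically as $\sim n$ at equality. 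In all cases, the expectation tends to $\infty$ as $n\to\infty$, uniformly in $\alpha>0$.

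\emph{Main obstacle.} The crux is the anisotropic lower bound on $K_n\ast K_n$: one must locate a region of frequencies on which both $K_n(\eta_1)$ and $K_n(\zeta-\eta_1)$ simultaneously reach the expected size $(2^k)^\gamma$, despite the coordinatewise factors $|\eta_{1,i}|^{1-2H_i}$ possibly vanishing on the coordinate hyperplanes. Equally delicate is the endpoint case $2H_0+\sum_i H_i=\tfrac{3d}{4}$, in which $2\gamma+d=0$ so no single dyadic shell contributes anything growing, and it is the summation across \emph{all} intermediate dyadic scales that produces the required (merely logarithmic) divergence.
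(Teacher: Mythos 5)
Your proof is correct and follows essentially the same route as the paper's: the Wick/Isserlis formula reduces the expectation to the self-convolution of the spectral density of $\<Psi>_n(t,\cdot)$, the cut-off $\chi$ is handled by the same Peetre-type pointwise bound, and the divergence is extracted from the same lower bound $\gga^{H_0,n}_{t}(r)\gtrsim r^{-4H_0}$ (valid for $1\leq r\leq 2^n$ and fixed $t>0$) on the region where the two interacting frequencies are comparable. The only difference is bookkeeping: you organize the final lower bound as a sum over dyadic shells $\sum_{k}(2^k)^{2\gamma+d}$, whereas the paper reaches the equivalent radial integral $\int_2^{2^n} r^{2\gamma+d-1}\,dr$ through explicit changes of variables and hyperspherical coordinates.
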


Based on the latter explosion phenomenon for $\<Psi2>_n$, we can at least conclude that the condition \eqref{cond-hurst-psi-2} in Proposition \ref{sto1} and Theorem \ref{resu1} is optimal when $d=1$.

When $d \geq 2$, one can observe that the situation where $\frac{3}{4} d<2 H_{0}+\sum_{i=1}^{d} H_{i} \leq d-\frac{1}{4}$ is not covered by the above results. In fact, we must admit that we are not able, for the moment, to offer a complete picture of the situation, that is a general well-posedness result for every $d \geq 2$ and for all indexes such that $2 H_{0}+\sum_{i=1}^{d} H_{i}>\frac{3}{4}d.$

However, we propose to make a first step toward this ambitious challenge by going further with the analysis in the two-dimensional setting $d=2.$ In this case, it turns out that several of our arguments and estimates behind the construction of $\<Psi2>$ can be slightly refined, leading us to the following statement:

\begin{proposition}\label{prop:stoch-constr}
Let $(H_0,H_1,H_2)\in (0,1)^{3}$ be such that 
\begin{equation}\label{constraint-h-i}
0<H_1<\frac34 \quad , \quad 0<H_2< \frac34 \quad , \quad \frac{3}{2} < 2H_0+H_1+H_2 \leq  \frac74 \ .
\end{equation}
Then, for all smooth compactly-supported function $\chi:\R^2\to \R$, $T>0$, $2\leq p\leq \infty$ and 
\begin{equation}\label{regu-2}
\al >2- (2H_0+H_1+H_2)\ ,
\end{equation}
 the sequence $(\chi^2\<Psi2>_n)_{n \geq 1}$ (defined by \eqref{defi:cerise}) converges almost surely in the space $\mathcal{C}([0,T]; \mathcal{W}^{-2\al,p}(\mathbb{R}^2))$.
\smallskip

We still denote the limit of this sequence by  $\chi^2 \<Psi2>$.
\end{proposition}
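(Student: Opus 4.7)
The plan is to follow the same Wiener-chaos scheme as in the proof of Proposition~\ref{sto1}, but to replace the global second-moment bound by a more refined Fourier-side estimate that exploits the two individual constraints $H_1,H_2<\tfrac34$ in addition to the sum condition on $2H_0+H_1+H_2$. Since $\chi^2\<Psi2>_n(t,\cdot)$ belongs to the second Wiener chaos of the underlying Gaussian noise, Nelson's hypercontractivity reduces the control of $L^q(\Omega;\mathcal W^{-2\al,p}(\R^2))$-norms of the differences $\chi^2(\<Psi2>_n-\<Psi2>_m)(t,\cdot)$ to that of their second moments. After exchanging expectation and norm via Minkowski, the task reduces to controlling, uniformly in $\xi$ and weighted by $\langle\xi\rangle^{-4\al}$, the quantity
$$I_{n,m}(t,\xi):=\mathbb E\big[|\mathcal F(\chi^2(\<Psi2>_n-\<Psi2>_m)(t,\cdot))(\xi)|^2\big],$$
together with its time increments.

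Writing $\<Psi>_n$ as a Wiener integral against the fractional noise (with spectral density $\mu(\eta_0,\eta)=|\eta_0|^{1-2H_0}|\eta_1|^{1-2H_1}|\eta_2|^{1-2H_2}$) and applying the It\^o isometry to the double Wiener integral representation of the Wick square, $I_{n,m}(t,\xi)$ takes the form $\int\!\!\int |\widehat{\chi^2}(\xi-\eta-\eta')|^2\, K_{n,m}(t,\eta,\eta')\, d\eta\, d\eta'$, where $K_{n,m}$ is the product of the two spatial spectral densities, a heat-type factor of order $(|\eta|^2+|\eta'|^2)^{-(2-2H_0+\delta)}$ (for arbitrarily small $\delta>0$) obtained after integrating out the time variable thanks to the parabolic dissipation mechanism already used in Proposition~\ref{prop:renorm-cstt}, and a smooth cutoff at frequencies $\sim 2^n, 2^m$. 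Integrating $\xi$ against $\langle\xi\rangle^{-4\al}$ using the Schwartz decay of $\widehat{\chi^2}$ reduces the problem to showing the convergence (and the geometric decay of the differences) of
$$\int_{\R^2}\!\!\int_{\R^2} \frac{\langle\eta+\eta'\rangle^{-4\al}\,|\eta_1|^{1-2H_1}|\eta_2|^{1-2H_2}|\eta'_1|^{1-2H_1}|\eta'_2|^{1-2H_2}}{(|\eta|^2+|\eta'|^2)^{2-2H_0+\delta}}\, d\eta\, d\eta',$$
under the precise condition $\al>2-(2H_0+H_1+H_2)$.

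The key refinement compared to Proposition~\ref{sto1} lies in this last integral: instead of bounding the product of spatial spectral densities globally, one splits the singularities coordinate by coordinate. For each $i\in\{1,2\}$, the assumption $H_i<\tfrac34$ gives $1-2H_i>-\tfrac12$, which is precisely what keeps the one-dimensional slices near $\eta_i=0$ (and $\eta'_i=0$) integrable when squared; the large-frequency decay is then governed by the combined exponent $2-(2H_0+H_1+H_2)$, and a direct change of variables, separating the $\eta$- and $\eta'$-integrations and using that $\langle\eta+\eta'\rangle^{-4\al}$ provides exactly the needed gain, yields finiteness under~\eqref{regu-2}. Reproducing the same computation for the time increments with an extra factor $|t-s|^\ep$ coming from the difference of heat propagators, together with Kolmogorov's continuity criterion, upgrades the Cauchy property in $L^q(\Omega)$ to almost-sure convergence in $\mathcal C([0,T];\mathcal W^{-2\al,p}(\R^2))$. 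The main obstacle is precisely this spatial Fourier integral: the global estimate of Proposition~\ref{sto1} costs an extra $\tfrac14$ because it does not distinguish coordinate directions, so the entire improvement hinges on performing a sharp coordinate-wise analysis that handles all the singularities simultaneously under the single constraint $\al>2-(2H_0+H_1+H_2)$.
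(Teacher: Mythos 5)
Your overall architecture coincides with the paper's: reduce, via the second‑Wiener‑chaos structure and hypercontractivity, to a second‑moment estimate; let the compactly supported cutoff $\chi$ produce the weight $\{1+|\eta+\etati|^2\}^{-2\al}$ after the $x$‑integration (this is exactly Lemma~\ref{lem:handle-chi-correc}); and then establish finiteness of a deterministic double frequency integral by treating the singularities $|\eta_i|^{1-2H_i}$ coordinate by coordinate --- which is indeed where $H_1,H_2<\tfrac34$ enters, since near the diagonal $\eta\approx\etati$ one effectively faces $|\eta_i|^{2-4H_i}$, locally integrable iff $H_i<\tfrac34$ --- while the combination $2H_0+H_1+H_2$ governs the decay at infinity. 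This is precisely the content of the paper's Lemmas~\ref{lem:bou-l-h} and~\ref{lem:tech-ordre-deux}, and your treatment of the time increments and the Kolmogorov/Garsia--Rodemich--Rumsey step is also the paper's.

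There is, however, a concrete quantitative error in your key displayed integral: the factor obtained by integrating out the time frequencies is not $(|\eta|^2+|\etati|^2)^{-(2-2H_0+\delta)}$. Since $|\gamma_t(\xi,r)|^2=\frac{1-2\cos(\xi t)e^{-r^2t}+e^{-2r^2t}}{r^4+\xi^2}$, Corollary~\ref{tec} gives $\int_{\R}|\gamma_t(\xi,r)|^2|\xi|^{1-2H_0}\,d\xi\lesssim \{1+r^{4H_0-4\ep}\}^{-1}$, and because $\mathbb{E}\big[\<Psi2>_n(t,y)\overline{\<Psi2>_n(t,\tilde y)}\big]$ is (up to a constant) the \emph{square} of the covariance of $\<Psi>_n$, the correct dissipation factor is the product $\{1+|\eta|^{4H_0-4\ep}\}^{-1}\{1+|\etati|^{4H_0-4\ep}\}^{-1}$, i.e.\ the denominators of the paper's kernels $K^{H}(\eta)K^{H}(\etati)$. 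With your exponent the large‑frequency power count yields the condition $\al>1+H_0-H_1-H_2$, which does not coincide with~\eqref{regu-2} and is strictly stronger whenever $H_0>\tfrac13$; so the claim that your integral is finite exactly under~\eqref{regu-2} fails as written. With the corrected factor the count gives exactly $\al>2-(2H_0+H_1+H_2)$ and your argument aligns with Lemma~\ref{lem:tech-ordre-deux}. A secondary caveat: ``separating the $\eta$‑ and $\etati$‑integrations'' cannot be done globally because of the coupling $\{1+|\eta-\etati|^2\}^{-2\al}$; one must split into an off‑diagonal region (where Cauchy--Schwarz decouples the variables, as in the proof of Proposition~\ref{sto1}) and a near‑diagonal region where the genuinely two‑dimensional analysis and the constraints $H_i<\tfrac34$ are used, as in the adaptation of \cite[Lemma 3.3]{deya-wave-2} invoked by the paper.
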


\noindent
According to Proposition \ref{prop:limite-case} with $d=2$, this construction is optimal. We are now in a position to state our more general result when $d=2$:

\begin{theorem}[\textbf{Roughest case}]\label{resu2}
Suppose that $d=2$ and $p\geq2.$
Let $(H_0,H_1,H_2)\in (0,1)^{3}$ be such that 
\begin{equation}
0<H_1<\frac34 \quad , \quad 0<H_2< \frac34 \quad , \quad \frac{3}{2} < 2H_0+H_1+H_2 \leq  \frac74 \ .
\end{equation}
Fix $\al>0$ such that 
\begin{equation}\label{alpha2}
2-(2H_0+H_1+H_2)<\al<\frac{1}{2}.
\end{equation}
Then the assertions below hold true:

\smallskip

\noindent
$(i)$ One can find $\be>0$ such that almost surely, for every $\phi \in \mathcal{W}^{-\alpha,p}(\mathbb{R}^2)$, there exists a time $T_0(\omega)>0$ for which equation~\eqref{dim-d} admits a unique Wick solution $u$ (in the meaning of Definition~\ref{defi:sol}) in the set
$$\mathcal{S}_{T_0}:= \<Psi> + X^{\alpha,\be}(T_0)\, ,$$
where
$$X^{\alpha,\be}(T_0):=\mathcal{C}([0,T_0]; \mathcal{W}^{-\alpha,p}(\mathbb{R}^2))\cap \mathcal{C}((0,T_0]; \mathcal{W}^{\be,p}(\mathbb{R}^2)) .$$

\smallskip

\noindent
$(ii)$ For any $n\geq 1$, let us note $\widetilde{u}_n$ the \emph{smooth} Wick solution of~\eqref{dim-d}, that is $\widetilde{u}_n$ is the solution (in the meaning of Definition~\ref{defi:sol}) related to the pair $(\rho \<Psi>_n,\rho^2\<Psi2>_n)$. Then, for all $\al$ satisfying~(\ref{alpha2}) and for any test function $\chi:\R^2 \to \R$, the sequence $(\chi\widetilde{u}_{n})_{n\geq 1}$ converges almost surely in $\mathcal{C}([0,T_0];\mathcal{W}^{-\al,p}(\mathbb{R}^2))$ to $\chi u$, where $u$ is the Wick solution from item $(i)$.
\end{theorem}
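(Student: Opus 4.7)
The strategy is to mimic the Banach fixed-point argument underlying Theorem~\ref{resu1}, now feeding it with the sharper stochastic input $\rho^2\<Psi2>\in\mathcal{C}([0,T];\mathcal{W}^{-2\al,p}(\R^2))$ furnished by Proposition~\ref{prop:stoch-constr}. Concretely, define the map
\begin{equation*}
\Gamma(v)_t := e^{t\Delta}\phi + \int_0^t e^{(t-\tau)\Delta}(\rho^2 v_\tau^2)\,d\tau + 2\int_0^t e^{(t-\tau)\Delta}\big((\rho v_\tau)(\rho \<Psi>_\tau)\big)\,d\tau + \int_0^t e^{(t-\tau)\Delta}(\rho^2 \<Psi2>_\tau)\,d\tau
\end{equation*}
on the space $X^{\al,\be}(T_0)$ endowed with the weighted norm
\begin{equation*}
\|v\|_{X^{\al,\be}(T_0)} := \sup_{t\in[0,T_0]}\|v_t\|_{\mathcal{W}^{-\al,p}(\R^2)} + \sup_{t\in(0,T_0]} t^{(\al+\be)/2}\,\|v_t\|_{\mathcal{W}^{\be,p}(\R^2)}.
\end{equation*}
The goal is then to prove that, for a suitable $\be>0$, $\Gamma$ stabilises a ball of $X^{\al,\be}(T_0)$ and is a strict contraction on it, provided $T_0(\omega)>0$ is small enough.

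The required estimates split into four pieces, one per term of $\Gamma$. Propagation of the initial datum is governed by the standard parabolic bound $\|e^{t\Delta}\phi\|_{\mathcal{W}^{\be,p}}\lesssim t^{-(\al+\be)/2}\|\phi\|_{\mathcal{W}^{-\al,p}}$, matching the chosen weight exactly. The quadratic term $\rho^2 v_\tau^2$ is controlled via a Sobolev--H\"older combination (or directly by the algebra property of $\mathcal{W}^{\be,p}(\R^2)$ when $\be p>2$), and the two parabolic derivatives recovered by $\int_0^t e^{(t-\tau)\Delta}\cdot\,d\tau$ absorb the singularity. The genuine singular term is $(\rho v_\tau)(\rho \<Psi>_\tau)$: since $\rho\<Psi>_\tau\in\mathcal{W}^{-\al,p}$ while $\rho v_\tau\in\mathcal{W}^{\be,p}$ with $\be>\al$, a Bony-type paraproduct (or direct duality) argument defines the product in a distribution space of order $-\al$, after which the parabolic gain of two derivatives and a Beta-function integration in $\tau$ bring the result back into $\mathcal{W}^{\be,p}(\R^2)$. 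Finally, the stochastic drift is bounded using $\rho^2\<Psi2>\in\mathcal{W}^{-2\al,p}$ and heat smoothing, which requires the gain $2-2\al>0$, that is precisely $\al<\tfrac12$.

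The heart of the argument, and the place where the optimal window~\eqref{alpha2} is genuinely used, is the calibration of $\be$. It has to satisfy simultaneously $\be>\al$ (for the singular product), $\be+\al<1$ (so that the quadratic term is time-integrable against the parabolic kernel), $\be<2-2\al$ (for the stochastic drift), and a mild Sobolev-exponent constraint such as $\be p>2$ to handle $v^2$ through the algebra property. The upper bound $\al<\tfrac12$ from~\eqref{alpha2} is exactly what leaves this window non-empty, while the lower bound $2H_0+H_1+H_2>\tfrac32$ is built in through the validity of Proposition~\ref{prop:stoch-constr}. Once $\be$ is fixed, telescoping estimates produce a contraction constant of the form $C\,T_0^{\varepsilon}$ for some $\varepsilon>0$, giving a unique fixed point $v\in X^{\al,\be}(T_0)$ on a small random interval $[0,T_0(\omega)]$ and hence~(i). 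Item~(ii) is then immediate from the uniform Lipschitz dependence of $\Gamma$ on the stochastic pair $(\chi\rho\<Psi>_n,\chi^2\rho^2\<Psi2>_n)$: the almost-sure convergence delivered by Propositions~\ref{sto} and~\ref{prop:stoch-constr} transfers directly to $\chi\widetilde{u}_n\to \chi u$ in $\mathcal{C}([0,T_0];\mathcal{W}^{-\al,p}(\R^2))$.
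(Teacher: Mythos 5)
Your proposal is correct and follows essentially the same route as the paper: the paper simply factors the fixed-point argument out into the deterministic Theorem~\ref{thm:noregular} (via the estimates of Proposition~\ref{last} in the weighted space $X^{\al,\be,p}(T)$ with the $Y(T)$-seminorm $\sup_t t^{(\al+\be)/2}\|v(t)\|_{\mathcal{W}^{\be,p}}$), checks that $\al<\frac12\leq 1-\frac{d}{2p}$ leaves the window $\al<\be<\min(2-\al-\frac{d}{p},2-2\al)$ non-empty, and feeds it the pair $(\rho\<Psi>,\rho^2\<Psi2>)$ produced by Propositions~\ref{sto} and~\ref{prop:stoch-constr}; item $(ii)$ is obtained exactly as you describe, from the Lipschitz dependence of the fixed-point map on the stochastic data. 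One caveat: you list $\be p>2$ (to invoke the algebra property for $\rho^2v^2$) among the \emph{simultaneous} constraints on $\be$, but for $p=2$ this forces $\be>1$ and is incompatible with $\be+\al<1$, so that route empties your parameter window; the quadratic term must instead be handled as in the paper, via the asymmetric product estimate of Lemma~\ref{lem:product} applied to $\mathcal{W}^{-\al,p}\times\mathcal{W}^{\be,p}\to\mathcal{W}^{-\al,p/2}$ together with the integrability gain of the heat kernel, which requires no lower bound on $\be p$.
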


\begin{remark}\label{rk:limitations}
Again, the restriction comes from the stochastic part of the study (see Proposition~\ref{prop:stoch-constr}) that tackles with the existence of the stochastic element $\<Psi2>$. This situation is analogous to those in the wave setting (see for example ~\cite{deya-wave,gubi-koch-oh,gubi-koch-oh-2,oh-okamoto-2}), where the strategy is limited by the domain of validity of the stochastic constructions.
\end{remark}

The above well-posedness theorem can be viewed as the main novelty of our work. We can sum up our results in the following way ; equation (\ref{dim-d}) is well-posed in some $\mathcal{W}^{\al,p}(\mathbb{R}^2)$ space when $$\frac{3}{2} < 2H_0+H_1+H_2$$
and is ill-posed as soon as 
$$2H_0+H_1+H_2\leq \frac{3}{2}$$
in the sense that the second order stochastic term is not a continuous function of time with values in Sobolev spaces.
These results can be viewed as a generalization of those obtained by Oh and Okamoto in \cite{oh-okamoto-2} where the parameter $\alpha$ (related to the derivatives of their Gaussian noise) has to satisfy analogous constraints to those of the combination $2H_0+H_1+H_2$.

\begin{remark}\label{rk:compari-lin}
It is interesting to compare the above regularity restriction~\eqref{assump-gene-al} for $\<Psi>$ in Proposition \ref{sto} with the analogous results of~\cite{deya-wave} in the wave setting and of~\cite{deya3} in the Schr{\"o}dinger setting, that is when replacing $\partial_t\<Psi>-\Delta \<Psi>$ with $\partial^2_t\<Psi>-\Delta \<Psi>$ (respectively $\imath\partial_t\<Psi>-\Delta \<Psi>$) . In these situations, the conditions on $\al$ are
$$
\alpha_{\text{wave}} > d-\frac12-\sum_{i=0}^{d}H_i \quad {and} \quad \alpha_{\text{schr{\"od}}} > d+1-\bigg(2H_0+\sum_{i=1}^{d}H_i\bigg) .
$$
In fact, the method presented in this paper would allow us to define the linear solution $\<Psi>$ of several other fractional problems. Let us briefly describe some results that could be obtained along the same procedure (see Section \ref{stochastic}).
First of all, for fractional Schr{\"o}dinger-type  equations of the form:
$$i \partial_{t} \<Psi>(t, x)+|\nabla|^{\sigma} \<Psi>(t, x)=0,$$
where $|\nabla|^{\sigma}$ is the Fourier multiplier by $|\xi|^{\sigma}$ and $\sigma \in (0,+\infty)$, that generalize the classical one (take $\sigma=2$), one could prove that the associated condition on $\al$ becomes 
$$ \alpha_{\text{ frac schr{\"od}}} > d+\frac{\sigma}{2}-\bigg(\sigma H_0+\sum_{i=1}^{d}H_i\bigg) .
$$
Likewise, we could construct the linear solution $\<Psi>$ for fractional wave-type  equations of the form:
$$ \partial^2_{t} \<Psi>(t, x)+|\nabla|^{2\sigma} \<Psi>(t, x)=0,$$
that generalize the standard one (take $\sigma=1$). In this case, the associated condition on $\al$ would be 
$$ \alpha_{\text{ frac wave}} > d-\frac{\sigma}{2}-\bigg(\sigma H_0+\sum_{i=1}^{d}H_i\bigg) .
$$
To end with, it is possible to construct the first order stochastic process $\<Psi>$ for fractional heat-type  equations of the form:
$$ \partial_{t} \<Psi>(t, x)+|\nabla|^{\sigma} \<Psi>(t, x)=0,$$
as soon as 
$$ \alpha_{\text{ frac heat}} > d-\bigg(\sigma H_0+\sum_{i=1}^{d}H_i\bigg) .$$
These considerations allow us to understand that the key parameter in the construction of the linear solution is the exponent of the Fourier multiplier that  appears in front of $H_0$ (that described the irregularity of $B$ with respect to the time $t$) whereas the combination $\sum_{i=1}^{d}H_i$ stays unchanged.

\end{remark}

\subsection{Notations}\label{subsec:notations}

 Let $d\geq 1$. Throughout the paper, a test function (on $\R^d$) will be any function $\rho:\R^d \to \R$ that is smooth and compactly-supported. Naturally, let us denote by $\mathcal{S}(\R^d)$ the space of Schwartz functions on $\R^d$.

\smallskip

The main functional spaces of this article are the Sobolev spaces defined for all $s\in \mathbb{R}$ and $1\leq p\leq \infty$ as
$$\mathcal{W}^{s,p}(\mathbb{R}^d):=\left\{f\in \mathcal{S}'(\mathbb{R}^d):\ \|f \|_{\mathcal{W}^{s,p}}=\|\mathcal{F}^{-1}(\{1+|.|^2\}^{\frac{s}{2}}\mathcal{F}f) |L^p(\mathbb{R}^d)\| 
  <\infty \right\} \, ,$$
where the Fourier transform $\cf$, resp. the inverse Fourier transform $\cf^{-1}$, is defined with the following convention: for every $f \in \mathcal{S}(\mathbb{R}^d)$ and $x \in \mathbb{R}^d,$
$$\mathcal{F}(f)(x)=\hat{f}(x):=\int_{\mathbb{R}^d} f(y)e^{-\imath \langle x, y \rangle}dy\, , \quad \text{resp.} \ \ \mathcal{F}^{-1}(f)(x):=\frac{1}{(2\pi)^d}\int_{\mathbb{R}^d} f(y)e^{\imath \langle x, y \rangle}dy\, .$$
We also set $H^s(\mathbb{R}^d):=\mathcal{W}^{s,2}(\mathbb{R}^d)$.

\smallskip

Now, as soon as space-time functions (or distributions) are concerned, we may use the following shortcut notation: for every $T\geq 0$, $1\leq p,q\leq \infty$ and $s\in \R$, 
\begin{equation}\label{shortcut-spaces}
L^p_T \cw^{s,q}:=L^p([0,T];\mathcal{W}^{s,q}(\mathbb{R}^d))\, , \quad  \quad \|.\|_{L^p_T \cw^{s,q}}:=\|.\|_{L^p([0,T];\mathcal{W}^{s,q}(\mathbb{R}^d))} \, .
\end{equation}
The notation $\mathcal{C}([0,T];\mathcal{W}^{s,q}(\mathbb{R}^d))$ will refer to the set of continuous functions on $[0,T]$ taking values in $\mathcal{W}^{s,q}(\mathbb{R}^d)$.

\smallskip

\subsection{Outline of the study}

\

\smallskip

The rest of the article is organized as follows. In Section~\ref{sec:regular}, we prove the local well-posedness result in the regular case. Section~\ref{sec:irreg-case-det} is devoted to the analysis in the irregular case. Then, in Section~\ref{stochastic} and Section \ref{rough}, we develop the stochastic constructions at the core of the equation. To end with, in Section \ref{end}, we combined the results from the previous sections to prove Theorem \ref{resu}, Theorem \ref{resu1} and Theorem \ref{resu2}.
\medskip

\textit{Throughout the paper, the notation $A\lesssim B$ will be used to signify that there exists an irrelevant constant $c$ such that $A\leq c B$.}

\section{Analysis of the deterministic equation under condition \textbf{(H1)}}\label{sec:regular}

The aim of this section is to deal with the model in the \emph{regular} case, that is when assumption~\eqref{cond-regu-case} on the Hurst index is verified, and the linear solution $\rho \<Psi>$ (defined by Proposition~\ref{sto}) is a function of both time \emph{and} space. Recall that in this situation, the equation is interpreted through Definition~\ref{defi:sol-regu}. Thus, what we would like to solve in this section is the equation 
\begin{multline}\label{det-0}
v_t=e^{t\Delta}\phi+\int_0^t e^{(t-\tau)\Delta}(\rho^2 v_{\tau}^2)d\tau+2\int_0^t e^{(t-\tau)\Delta}(\rho v_{\tau}\cdot\rho \<Psi>_{\tau})\, d\tau\\
+\int_0^t e^{(t-\tau)\Delta}(\rho^2 \<Psi>_{\tau}^2)\, d\tau\, , \quad t\in [0,T] \, .
\end{multline}

\smallskip

Contrary to the next sections where several probabilistic arguments will be used, our strategy towards a (local) solution $v$ for~\eqref{det-0} will be based on deterministic estimates only. To put it differently, we henceforth consider $\rho\<Psi>$ as a fixed (i.e., non-random) element in the space $\mathcal{C}([0,T]; \mathcal{W}^{\be,p}(\mathbb{R}^d))$
for some appropriate $0<\beta<1$ (where $\beta=-\alpha$ is given by Proposition~\ref{sto}) and $p\geq2$, and try to solve the deterministic equation below: for $\luxor \in \mathcal{C}([0,T]; \mathcal{W}^{\be,p}(\mathbb{R}^d))$,
\begin{multline}\label{det}
v_t=e^{t\Delta}\phi+\int_0^t e^{(t-\tau)\Delta}(\rho^2 v_{\tau}^2)d\tau+2\int_0^t e^{(t-\tau)\Delta}(\rho v_{\tau}\cdot\luxor_{\tau})\, d\tau\\
+\int_0^t e^{(t-\tau)\Delta}(\luxor_{\tau}^2)\, d\tau\, , \quad t\in [0,T] \, .
\end{multline}
Let us present the tools related to the two main operations in~\eqref{det}.

\subsection{$\mathcal{W}^{\al,p}-\mathcal{W}^{\al,q}$ estimate}

\

\smallskip

We recall the well-known integrability property of the heat semigroup resulting from the Riesz-Thorin theorem.  

\begin{proposition}\label{estimate}
Let $1\leq q \leq p \leq \infty$, $\al \in \mathbb{R}$ and $u_0 \in \mathcal{W}^{\al,q}(\mathbb{R}^d).$
Then for all $t>0$,
$$\|e^{t\Delta}u_0\|_{\mathcal{W}^{\al,p}}\leq (4\pi t)^{-\frac{d}{2}(\frac{1}{q}-\frac{1}{p})} \|u_0\|_{\mathcal{W}^{\al,q}}.$$
\end{proposition}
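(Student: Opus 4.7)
The plan is a two-step reduction. First, I would reduce the Sobolev-norm estimate to an $L^q$-$L^p$ bound for the scalar heat semigroup; second, I would obtain that scalar bound by combining Young's convolution inequality with an explicit computation (or interpolation) of the $L^r$-norm of the heat kernel.

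For the first step, recall that the Bessel potential $\langle \nabla \rangle^{\al} := \mathcal{F}^{-1}\big((1+|\cdot|^2)^{\al/2}\mathcal{F}(\cdot)\big)$ and the heat semigroup $e^{t\Delta} = \mathcal{F}^{-1}(e^{-t|\cdot|^2}\mathcal{F}(\cdot))$ are both Fourier multipliers and hence commute on $\mathcal{S}'(\R^d)$. Consequently
$$
\|e^{t\Delta} u_0\|_{\mathcal{W}^{\al,p}} = \|\langle\nabla\rangle^{\al} e^{t\Delta} u_0\|_{L^p} = \|e^{t\Delta}(\langle\nabla\rangle^{\al} u_0)\|_{L^p},
$$
which reduces the claim to the scalar bound
$\|e^{t\Delta} v_0\|_{L^p} \leq (4\pi t)^{-\frac{d}{2}(\frac{1}{q}-\frac{1}{p})}\|v_0\|_{L^q}$
applied to $v_0 := \langle\nabla\rangle^{\al} u_0 \in L^q(\R^d)$.

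For the second step, I would write $e^{t\Delta}v_0 = G_t \ast v_0$ with $G_t(x) := (4\pi t)^{-d/2} e^{-|x|^2/(4t)}$. Choose $r\in[1,\infty]$ determined by $1+\frac{1}{p} = \frac{1}{q}+\frac{1}{r}$; this is admissible because $1\leq q\leq p$ forces $\frac{1}{q}-\frac{1}{p}\in[0,1]$, hence $r\geq 1$. Young's convolution inequality gives $\|G_t \ast v_0\|_{L^p} \leq \|G_t\|_{L^r}\|v_0\|_{L^q}$. The size of the kernel in $L^r$ can be controlled either by the direct Gaussian computation or, more in line with the hint in the statement, by the log-convexity bound
$$
\|G_t\|_{L^r} \leq \|G_t\|_{L^1}^{1/r}\,\|G_t\|_{L^\infty}^{1-1/r},
$$
which is the content of Riesz--Thorin (applied to the identity operator, or equivalently a direct Hölder estimate). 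Combined with the elementary facts $\|G_t\|_{L^1}=1$ and $\|G_t\|_{L^\infty}=(4\pi t)^{-d/2}$, this yields $\|G_t\|_{L^r} \leq (4\pi t)^{-\frac{d}{2}(1-\frac{1}{r})} = (4\pi t)^{-\frac{d}{2}(\frac{1}{q}-\frac{1}{p})}$.

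Chaining the two reductions produces the announced inequality. I do not expect any serious obstacle: every ingredient is classical, and the only verification is that the Young exponent $r$ is $\geq 1$, which is automatic from $q\leq p$.
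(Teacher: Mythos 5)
Your proof is correct, and it is exactly the classical argument the paper alludes to: the paper states Proposition \ref{estimate} without proof, merely attributing it to the Riesz--Thorin theorem, and your chain (commuting the multipliers $\langle\nabla\rangle^{\al}$ and $e^{t\Delta}$, Young's inequality with exponent $1+\frac1p=\frac1q+\frac1r$, and the log-convexity bound $\|G_t\|_{L^r}\leq \|G_t\|_{L^1}^{1/r}\|G_t\|_{L^\infty}^{1-1/r}$ with $\|G_t\|_{L^1}=1$, $\|G_t\|_{L^\infty}=(4\pi t)^{-d/2}$) is precisely the standard way to realize that claim, yielding the stated constant. No gaps.
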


\subsection{Pointwise multiplication}

\

\smallskip

The second tool to deal with equation (\ref{det}) is a refined fractional Leibniz rule that can be found in \cite{grafakos}. 
\begin{lemma}[Kato-Ponce inequality]\label{lem:frac-leibniz}
Let $1\leq r<\infty$ and $1<p_1,p_2,q_1,q_2< \infty$ verifying 
$$\frac{1}{r}=\frac{1}{p_1}+\frac{1}{p_2}=\frac{1}{q_1}+\frac{1}{q_2} \, .$$ 
Let $s\geq 0$.
Then one has
$$\|u\cdot v\|_{\mathcal{W}^{s,r}(\mathbb{R}^d)}\lesssim \|u\|_{\mathcal{W}^{s,p_1}(\mathbb{R}^d)}\|v\|_{L^{p_2}(\mathbb{R}^d)}+\|u\|_{L^{q_1}(\mathbb{R}^d)}\|v\|_{\mathcal{W}^{s,q_2}(\mathbb{R}^d)} \, .$$
\end{lemma}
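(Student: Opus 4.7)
\emph{Plan.} This is the classical Kato-Ponce (fractional Leibniz) inequality, whose proof is textbook material (see \cite{grafakos}); nonetheless, a self-contained sketch proceeds via Bony's paraproduct decomposition and Littlewood-Paley theory. The essential inputs are the square-function characterization $\|f\|_{\mathcal{W}^{s,p}}\sim \bigl\|\bigl(\sum_j 4^{js}|\Delta_j f|^2\bigr)^{1/2}\bigr\|_{L^p}$ for $1<p<\infty$, the H\"ormander-Mikhlin multiplier theorem, Bernstein's inequality for frequency-localized functions, and the Hardy-Littlewood maximal estimate.

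First, I would set up dyadic projectors $\Delta_j$ and low-pass cut-offs $S_j=\sum_{k<j}\Delta_k$ and split
$$uv = T_u v + T_v u + R(u,v), \qquad T_u v = \sum_j S_{j-2}u\cdot\Delta_j v, \qquad R(u,v) = \sum_{|j-k|\leq 2}\Delta_j u\cdot\Delta_k v.$$
For $T_u v$, each summand is frequency-localized in an annulus of radius $\sim 2^j$, so $(1-\Delta)^{s/2}$ behaves essentially as multiplication by $2^{js}$ on it. Combining the square-function characterization with H\"older's inequality in the $(q_1,q_2)$ split and the maximal bound $\sup_j|S_{j-2}u|\lesssim Mu$ then yields $\|T_u v\|_{\mathcal{W}^{s,r}}\lesssim \|u\|_{L^{q_1}}\|v\|_{\mathcal{W}^{s,q_2}}$. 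Symmetrically, $\|T_v u\|_{\mathcal{W}^{s,r}}\lesssim \|u\|_{\mathcal{W}^{s,p_1}}\|v\|_{L^{p_2}}$ via the $(p_1,p_2)$ split, and neither bound is the dominant obstruction.

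The main difficulty is the resonant remainder $R(u,v)$: summands with $|j-k|\leq 2$ are no longer localized at scale $2^j$, since cancellations between the frequencies of $u$ and $v$ can move mass to arbitrarily lower scales, so the naive $2^{js}$ cost of the fractional derivative cannot be used directly. I would rewrite $R(u,v)=\sum_j \Delta_j u\cdot\widetilde{\Delta}_j v$ with $\widetilde{\Delta}_j = \sum_{|k-j|\leq 2}\Delta_k$, insert an outer projector $\Delta_\ell$, and split according to whether $\ell\leq j+C$ or $\ell > j+C$: in the first regime Bernstein's inequality combined with the assumption $s\geq 0$ produces a geometrically summable factor $2^{(\ell-j)s}$, while in the second, Fourier-support considerations force the contribution to vanish. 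A final application of H\"older's inequality, used either with exponents $(p_1,p_2)$ or $(q_1,q_2)$, produces bounds compatible with both pieces of the target estimate, and summing in $\ell$ and $j$ closes the argument. The endpoint $r=1$ would then require additional atomic-$H^1$ considerations, which is precisely the content of the more delicate parts of \cite{grafakos}.
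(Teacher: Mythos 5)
The paper offers no proof of this lemma at all: it is quoted verbatim from Grafakos--Oh \cite{grafakos} as a black-box tool, so there is nothing internal to compare your argument against. Your sketch is the standard paraproduct proof and is essentially sound for the range $1<r<\infty$: the two paraproducts $T_u v$, $T_v u$ are handled exactly as you describe (annulus localization, square-function norm, maximal function, H\"older with the $(q_1,q_2)$ resp.\ $(p_1,p_2)$ split), and your treatment of the resonant part correctly identifies that $\Delta_\ell\big(\Delta_j u\cdot\widetilde{\Delta}_j v\big)$ vanishes for $\ell>j+C$ and that the surviving sum over $j\geq \ell-C$ is controlled by a factor $2^{(\ell-j)s}$. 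Two small caveats. First, that factor is geometrically summable only for $s>0$; at $s=0$ your remainder argument as written does not close, but the statement there is just H\"older's inequality, so you should carve out that case separately rather than invoke ``$s\geq 0$'' for summability. Second, you are right that the endpoint $r=1$ (where the Littlewood--Paley characterization of $\mathcal{W}^{s,r}$ and the vector-valued maximal estimate both fail) is genuinely outside the scope of this sketch; since the paper only ever applies the lemma with $r=q\geq 1$ obtained from $\frac1q=\frac1p+\frac1r$ with $p\geq 2$, and in fact with exponents strictly between $1$ and $\infty$, deferring that endpoint to \cite{grafakos} is acceptable, though strictly speaking the lemma as stated includes $r=1$ and your proof does not cover it.
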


\subsection{About the resolution of the deterministic equation}\label{subsec:determ-regu}

\

\smallskip

Our main result regarding equation~\eqref{det} can be formulated in the following way:
\begin{theorem}\label{thm:regular}
Fix $\beta \in (0,1).$ Assume that $d\geq 1$ and $p\geq2$ is such that $\frac{d}{2p}<1+\frac{\be}{2}.$ For every $T>0$, define the space $X^{\beta,p}(T)$ as
\begin{equation}\label{scales-spaces-x}
X^{\be,p}(T):=\mathcal{C}([0,T]; \mathcal{W}^{\be,p}(\mathbb{R}^d)) \, ,
\end{equation}
equipped with the norm
$$\|v\|_{X(T)}:=\|v\|_{X^{\be,p}(T)}=\|v\|_{L^{\infty}_T \mathcal{W}^{\be,p}}.$$
Then for every $\phi \in \mathcal{W}^{\beta,p}(\mathbb{R}^d),$ one can find a time $T_0>0$ such that, for every $\luxor \in X^{\beta,p}(T_0),$ equation~\eqref{det} admits a unique solution in $X^{\beta,p}(T_0)$.
\end{theorem}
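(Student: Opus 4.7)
The plan is to realize $v$ as the unique fixed point of the map $\Gamma:v\mapsto $ RHS of \eqref{det} inside a closed ball of $X^{\be,p}(T_0)$, for some $T_0>0$ small enough, via the Banach contraction principle. The deterministic datum $e^{t\Delta}\phi$ is under immediate control since Proposition \ref{estimate} (applied with $p=q$) yields $\|e^{t\Delta}\phi\|_{\mathcal{W}^{\be,p}}\leq \|\phi\|_{\mathcal{W}^{\be,p}}$ uniformly in $t$. The real work lies in controlling the three bilinear integrals, all of which will be handled by the same scheme (Kato--Ponce inequality for the product, Sobolev embedding to an appropriate Lebesgue space, then heat smoothing to land back in $\mathcal{W}^{\be,p}$), so I describe the procedure for the genuine square $\int_0^t e^{(t-\tau)\Delta}(\rho^2 v_\tau^2)\,d\tau$; the mixed term and the $\luxor_\tau^2$ term are treated identically after replacing one or two copies of $v$ by $\luxor$.

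The key estimate I would aim for is: for the auxiliary exponent $q$ defined by $\frac{1}{q}:=\frac{2}{p}-\frac{\be}{d}$ (modulo the easier subcase $\be p>d$, in which $\mathcal{W}^{\be,p}\hookrightarrow L^\infty$ immediately gives an algebra property with no need of the heat gain below), combining Lemma \ref{lem:frac-leibniz} with the Sobolev embedding $\mathcal{W}^{\be,p}(\R^d)\hookrightarrow L^{q^\ast}(\R^d)$, $\frac{1}{q^\ast}=\frac{1}{p}-\frac{\be}{d}$, and absorbing the smooth compactly-supported factor $\rho^2$, I expect the Sobolev multiplication bound
$$\|\rho^2 v_\tau^2\|_{\mathcal{W}^{\be,q}}\lesssim \|v_\tau\|_{\mathcal{W}^{\be,p}}^{2}.$$
Invoking Proposition \ref{estimate} from $\mathcal{W}^{\be,q}$ to $\mathcal{W}^{\be,p}$ (note that $q\leq p$) then yields
$$\bigg\|\int_0^t e^{(t-\tau)\Delta}(\rho^2 v_\tau^2)\,d\tau\bigg\|_{\mathcal{W}^{\be,p}}\lesssim \int_0^t (t-\tau)^{-\big(\frac{d}{2p}-\frac{\be}{2}\big)}\|v_\tau\|_{\mathcal{W}^{\be,p}}^{2}\,d\tau,$$
and the time singularity has exponent $\frac{d}{2p}-\frac{\be}{2}<1$ precisely because of the hypothesis $\frac{d}{2p}<1+\frac{\be}{2}$; integrating produces a factor $T_0^{\gamma}$ with $\gamma:=1-\frac{d}{2p}+\frac{\be}{2}>0$.

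Repeating the same recipe for the mixed and the purely stochastic integrals yields the overall bound
$$\|\Gamma(v)\|_{X(T_0)}\leq \|\phi\|_{\mathcal{W}^{\be,p}}+C\,T_0^{\gamma}\big(\|v\|_{X(T_0)}+\|\luxor\|_{X(T_0)}\big)^{2},$$
together with the parallel bilinear contraction bound
$$\|\Gamma(v)-\Gamma(v')\|_{X(T_0)}\leq C\,T_0^{\gamma}\big(\|v\|_{X(T_0)}+\|v'\|_{X(T_0)}+\|\luxor\|_{X(T_0)}\big)\|v-v'\|_{X(T_0)}.$$
Choosing $R:=2\|\phi\|_{\mathcal{W}^{\be,p}}+1$ and then $T_0$ small enough (depending on $\|\phi\|_{\mathcal{W}^{\be,p}}$ and $\|\luxor\|_{X(T_0)}$) turns $\Gamma$ into a strict contraction of the closed ball of radius $R$ in $X^{\be,p}(T_0)$, yielding both existence and uniqueness. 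The only genuinely delicate point in this plan is the bookkeeping of Lebesgue/Sobolev indices so that the composition of Kato--Ponce, Sobolev embedding, and heat smoothing produces a time singularity of order strictly less than $1$; and this is exactly what the assumption $\frac{d}{2p}<1+\frac{\be}{2}$ encodes.
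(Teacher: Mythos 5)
Your proposal is correct and follows essentially the same route as the paper: the paper also reduces Theorem~\ref{thm:regular} to a Picard fixed-point argument via the quadratic bounds of Proposition~\ref{control-regular}, whose core is exactly your estimate (Lemma~\ref{techlemm}: Minkowski, then the $\mathcal{W}^{\be,q}$--$\mathcal{W}^{\be,p}$ heat bound of Proposition~\ref{estimate}, then Kato--Ponce combined with the Sobolev embedding $\mathcal{W}^{\be,p}\hookrightarrow L^{r}$), producing the same integrable singularity and the same gain $T^{1-\frac{d}{2p}+\frac{\be}{2}}$ under the hypothesis $\frac{d}{2p}<1+\frac{\be}{2}$. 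The only cosmetic difference is that you fix the auxiliary exponent at the endpoint $\frac{1}{q}=\frac{2}{p}-\frac{\be}{d}$ while the paper lets $q$ range over the admissible interval, which changes nothing in substance.
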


\smallskip

This local well-posedness result will be the consequence of a classical fixed-point argument. In order to implement this procedure, let us introduce the map $\Gamma$ defined by the right-hand side of~\eqref{det}, that is: for all $\phi \in \mathcal{W}^{\beta,p}(\mathbb{R}^d)$, $v, \luxor \in X^{\beta,p}(T)$, $T\geq 0$, set
\begin{multline*}
\Gamma_{T, \luxor}(v)_{t}:=e^{t\Delta}\phi+\int_0^t e^{(t-\tau)\Delta}(\rho^2 v_{\tau}^2)d\tau+2\int_0^t e^{(t-\tau)\Delta}(\rho v_{\tau}\cdot\luxor_{\tau})\, d\tau\\
+\int_0^t e^{(t-\tau)\Delta}(\luxor_{\tau}^2)\, d\tau\, , \quad t\in [0,T] \, .
\end{multline*}

\begin{proposition}\label{control-regular}
In the setting of Theorem~\ref{thm:regular}, the bounds below hold true: one can find $\varepsilon>0$ such that for all $0\leq T \leq 1$, $\phi \in \mathcal{W}^{\beta,p}(\mathbb{R}^d), \luxor_1,\luxor_2, v, v_1, v_2 \in X^{\beta,p}(T):=X(T)$, 
\begin{equation}\label{bound1}
\|\Gamma_{T, \luxor_1}(v)\|_{X(T)}\lesssim \|\phi\|_{\mathcal{W}^{\beta,p}(\mathbb{R}^d)} +T^{\varepsilon}\Big[\|v\|_{X(T)}^2+\| \luxor_1\|_{X(T)}\|v\|_{X(T)}+\|\luxor_1\|_{X(T)}^2\Big] \, ,
\end{equation}
and
\begin{align}
&\|\Gamma_{T, \luxor_1}(v_1)-\Gamma_{T, \luxor_2}(v_2)\|_{X(T)}\nonumber \\
&\lesssim T^{\varepsilon}\Big[\|v_1-v_2\|_{X(T)}\big\{\|v_1\|_{X(T)}+\| v_2\|_{X(T)}\big\}+\|\luxor_1-\luxor_2\|_{X(T)}\| v_1\|_{X(T)}\nonumber\\
&\hspace{1cm}+\|\luxor_2\|_{X(T)}\|v_1-v_2\|_{X(T)}+\|\luxor_1-\luxor_2\|_{X(T)}\big\{\|\luxor_1\|_{X(T)}+\|\luxor_2\|_{X(T)}\big\}\Big]\, ,\label{bound2}   
\end{align}
where the proportional constants only depend on $\be$, $p$ and $\rho$.
\end{proposition}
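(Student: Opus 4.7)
The plan is to bound each of the three nonlinear Duhamel terms in $\Gamma_{T,\luxor_1}(v)-e^{t\Delta}\phi$ by combining the two tools of Section~2: for a product $f\cdot g$, first apply Proposition~\ref{estimate} (between $L^q$ and $L^p$ for some well-chosen $q\leq p$) to convert the $\mathcal{W}^{\be,p}$-norm of the Duhamel integral into an $\mathcal{W}^{\be,q}$-norm of the integrand, then control the latter product norm via the Kato-Ponce inequality (Lemma~\ref{lem:frac-leibniz}). The initial data term $e^{t\Delta}\phi$ is handled directly by Proposition~\ref{estimate} with $q=p$, producing the $\|\phi\|_{\mathcal{W}^{\be,p}}$ contribution of~\eqref{bound1}. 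The cut-off $\rho$ will be absorbed throughout using the standard fact that multiplication by $\rho\in \cac^\infty_c(\R^d)$ is a bounded operator on $\mathcal{W}^{\be,p}(\R^d)$.

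The key technical step is the choice of an admissible intermediate exponent. I will pick $q\in[1,p]$ together with its conjugate $p'\in[p,\infty)$ defined by $\tfrac{1}{p'}:=\tfrac{1}{q}-\tfrac{1}{p}$ so that simultaneously
\begin{equation*}
\max\Big(0,\frac{1}{p}-\frac{\be}{d}\Big)\;\leq\;\frac{1}{p'}\;<\;\frac{2}{d}\, .
\end{equation*}
The left inequality yields the Sobolev embedding $\mathcal{W}^{\be,p}(\R^d)\hookrightarrow L^{p'}(\R^d)$, which is exactly what will allow me to absorb the $L^{p'}$-factor produced by Kato-Ponce into the $X(T)$-norm. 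The right inequality ensures integrability of $\tau\mapsto (t-\tau)^{-d/(2p')}$ on $[0,t]$, producing the gain $T^\varepsilon$ with $\varepsilon:=1-\tfrac{d}{2p'}>0$. Existence of such a $p'$ is equivalent to $\tfrac{d}{2p}<1+\tfrac{\be}{2}$, so this is the single place where the standing hypothesis enters the argument.

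Once the exponent has been fixed, each nonlinear contribution is treated by the same three-line computation, illustrated on the pure-$v$ term: write $\rho^2 v_\tau^2=(\rho v_\tau)\cdot(\rho v_\tau)$, apply Kato-Ponce with the split $\tfrac{1}{q}=\tfrac{1}{p}+\tfrac{1}{p'}$, use the Sobolev embedding above to replace the $L^{p'}$-factor by $\|\cdot\|_{\mathcal{W}^{\be,p}}$, and use the multiplier bound $\|\rho w\|_{\mathcal{W}^{\be,p}}\lesssim \|w\|_{\mathcal{W}^{\be,p}}$; combined with Proposition~\ref{estimate} and integration in $\tau$ this produces $T^\varepsilon\|v\|_{X(T)}^2$. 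The identical argument applied to $(\rho v_\tau)\cdot\luxor_{1,\tau}$ and to $\luxor_{1,\tau}^2$ yields, respectively, the contributions $T^\varepsilon \|\luxor_1\|_{X(T)}\|v\|_{X(T)}$ and $T^\varepsilon \|\luxor_1\|_{X(T)}^2$, completing~\eqref{bound1}. The Lipschitz bound~\eqref{bound2} will be obtained by the same scheme after the elementary telescoping identities
\begin{equation*}
v_1^2-v_2^2=(v_1-v_2)(v_1+v_2)\, ,\quad \rho v_1\luxor_1-\rho v_2\luxor_2=\rho(v_1-v_2)\luxor_2+\rho v_1(\luxor_1-\luxor_2)\, ,
\end{equation*}
and $\luxor_1^2-\luxor_2^2=(\luxor_1-\luxor_2)(\luxor_1+\luxor_2)$. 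Beyond the exponent-balancing performed in the middle paragraph there is no real obstacle; all that remains is a careful but routine bookkeeping of the nine bilinear pieces entering~\eqref{bound2}.
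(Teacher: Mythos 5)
Your proposal is correct and follows essentially the same route as the paper: the paper's proof reduces \eqref{bound1}--\eqref{bound2} to a single bilinear estimate (Lemma~\ref{techlemm}) proved exactly by Minkowski's inequality, the $L^q\to L^p$ smoothing of Proposition~\ref{estimate}, the Kato--Ponce inequality with the split $\tfrac1q=\tfrac1p+\tfrac1r$, and the Sobolev embedding $\mathcal{W}^{\be,p}\hookrightarrow L^r$, with the hypothesis $\tfrac{d}{2p}<1+\tfrac{\be}{2}$ entering precisely to make the exponent window for $r$ (your $p'$) nonempty and the time singularity integrable. Your exponent bookkeeping and the telescoping for the difference bound match the paper's argument.
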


Before we focus on the proof of this proposition, let us briefly remember that, once endowed~\eqref{bound1}-\eqref{bound2}, the result of Theorem~\ref{thm:regular} follows from a classical application of the Picard fixed-point theorem. Precisely, using~\eqref{bound1}, we can first show that for all $T=T(\phi,\luxor)>0$ small enough, there exists a ball in $X(T)$ that is stable through the action of $\Gamma_{T, \luxor}$. Then, helped with~\eqref{bound2} (used with $\luxor_1=\luxor_2=\luxor$), the fact that $\Gamma_{T, \luxor}$ is actually a contraction on this ball is easily deduced (for $T>0$ possibly even smaller), which completes the proof of the theorem.

\smallskip

Let us remark that the continuity of $\Gamma_{T, \luxor}$ with respect to $\luxor$ (a direct consequence of~\eqref{bound2}) will be a major ingredient toward item $(ii)$ of Theorem~\ref{resu}.

\smallskip
We begin by establishing some estimates.
\begin{lemma}\label{techlemm}
Fix $\beta \in (0,1).$ Assume that $d\geq 1$ and $p\geq2$ is such that $\frac{d}{2p}<1+\frac{\be}{2}.$ There exists $\varepsilon>0$ such that for all $0\leq T\leq 1,$ $f_i=\rho v$ or $f_i=\rho\<Psi>,$
$$\Big\|\int_0^t e^{(t-\tau)\Delta}(f_1 \cdot f_2(\tau))d\tau\Big\|_{X^{\be,p}(T)}\lesssim T^{\varepsilon}\|f_1\|_{X^{\be,p}(T)}\|f_2\|_{X^{\be,p}(T)}.$$
\end{lemma}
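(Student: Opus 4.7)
The plan is to combine the gain of integrability provided by the heat semigroup (Proposition~\ref{estimate}) with the fractional Leibniz rule (Lemma~\ref{lem:frac-leibniz}), matched through a carefully chosen auxiliary Lebesgue exponent $q\in [1,p)$. Concretely, I would first apply Proposition~\ref{estimate} to obtain, for $0\leq \tau <t\leq T$,
$$\|e^{(t-\tau)\Delta}(f_1 f_2)(\tau)\|_{\mathcal{W}^{\be,p}} \lesssim (t-\tau)^{-\frac{d}{2}(\frac{1}{q}-\frac{1}{p})}\|(f_1f_2)(\tau)\|_{\mathcal{W}^{\be,q}}.$$
Then, setting $\frac{1}{p_2}:=\frac{1}{q}-\frac{1}{p}$ and applying Kato-Ponce with the symmetric choice $(p_1,p_2)=(q_2,q_1)=(p,p_2)$, I would bound $\|(f_1f_2)(\tau)\|_{\mathcal{W}^{\be,q}}$ by $\|f_1(\tau)\|_{\mathcal{W}^{\be,p}}\|f_2(\tau)\|_{L^{p_2}}+\|f_1(\tau)\|_{L^{p_2}}\|f_2(\tau)\|_{\mathcal{W}^{\be,p}}$. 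Finally, the Sobolev embedding $\mathcal{W}^{\be,p}(\R^d)\hookrightarrow L^{p_2}(\R^d)$, valid as soon as $\frac{1}{p_2}\geq \frac{1}{p}-\frac{\be}{d}$, absorbs the remaining $L^{p_2}$-norms into $\|f_i\|_{\mathcal{W}^{\be,p}}$.

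Putting these three steps together gives
$$\Big\|\int_0^t e^{(t-\tau)\Delta}(f_1 f_2)(\tau)\,d\tau\Big\|_{\mathcal{W}^{\be,p}} \lesssim \|f_1\|_{X^{\be,p}(T)}\|f_2\|_{X^{\be,p}(T)}\int_0^t (t-\tau)^{-\frac{d}{2}(\frac{1}{q}-\frac{1}{p})}d\tau,$$
so that a factor $T^{\varepsilon}$ with $\varepsilon>0$ appears precisely when $\frac{d}{2}(\frac{1}{q}-\frac{1}{p})<1$. Gathering the constraints, I need to pick $q$ in the window
$$\max\!\Big(\frac{1}{p},\,\frac{2}{p}-\frac{\be}{d}\Big)\;\leq\;\frac{1}{q}\;<\;\frac{1}{p}+\frac{2}{d},$$
and this interval is non-empty exactly when $\frac{1}{p}<\frac{\be+2}{d}$, i.e.\ when $\frac{d}{2p}<1+\frac{\be}{2}$. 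This is precisely the assumption of the lemma, which therefore enters the argument in a sharp way.

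The continuity in $t$ required by the definition $X^{\be,p}(T)=\mathcal{C}([0,T];\mathcal{W}^{\be,p})$ (rather than merely the $L^\infty_T\mathcal{W}^{\be,p}$-bound) I would deduce from the strong continuity of the heat semigroup on $\mathcal{W}^{\be,p}(\R^d)$ together with dominated convergence, the dominating function being the integrable bound $(t-\tau)^{-\frac{d}{2}(\frac{1}{q}-\frac{1}{p})}\|f_1\|_{X}\|f_2\|_{X}$ obtained above.

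The real content of the argument is the joint feasibility of two competing conditions on $q$: Sobolev embedding forces $1/q$ to stay above $\max(1/p,\,2/p-\be/d)$, while the temporal singularity $(t-\tau)^{-d(1/q-1/p)/2}$ has to remain integrable, which forces a strict upper bound on $1/q$. Verifying that the hypothesis $\frac{d}{2p}<1+\frac{\be}{2}$ makes this window non-empty is the main (and only non-routine) obstacle; once an admissible $q$ has been fixed, the remaining calculation is essentially mechanical.
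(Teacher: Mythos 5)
Your argument is correct and follows essentially the same route as the paper: Minkowski's inequality, the $\mathcal{W}^{\be,q}\to\mathcal{W}^{\be,p}$ smoothing of the heat semigroup (Proposition~\ref{estimate}), the Kato--Ponce inequality, and the Sobolev embedding $\mathcal{W}^{\be,p}\hookrightarrow L^{p_2}$, with the same feasibility window for the auxiliary exponent (the paper takes the lower bound strict so that all Kato--Ponce exponents stay finite, which you should do as well). The only addition is your explicit treatment of time-continuity, which the paper leaves implicit.
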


\begin{proof}
Let $0\leq t \leq T, 1\leq q<p$ and $1\leq r <\infty$ be such that $\frac{1}{q}=\frac{1}{p}+\frac{1}{r}.$
Using successively Minkowsky inequality, Proposition \ref{estimate} and Lemma \ref{lem:frac-leibniz}, we get

\begin{eqnarray*}
\Big\|\int_0^t e^{(t-\tau)\Delta}(f_1 \cdot f_2(\tau))d\tau\Big\|_{\mathcal{W}^{\be,p}}&\leq& \int_0^t \|e^{(t-\tau)\Delta}(f_1 \cdot f_2(\tau))\|_{\mathcal{W}^{\be,p}}d\tau\\
&\lesssim&\int_0^t (t-\tau)^{-\frac{d}{2}(\frac{1}{q}-\frac{1}{p})}\|f_1 \cdot f_2(\tau)\|_{\mathcal{W}^{\be,q}}d\tau \\
&\lesssim&\int_0^t(t-\tau)^{-\frac{d}{2}(\frac{1}{q}-\frac{1}{p})}[\|f_1(\tau)\|_{\mathcal{W}^{\be,p}}\|f_2(\tau)\|_{L^r}+\|f_2(\tau)\|_{\mathcal{W}^{\be,p}}\|f_1(\tau)\|_{L^r}]d\tau.
\end{eqnarray*}
Now, as soon as $\frac{1}{q}>\max\big(\frac{2}{p}-\frac{\be}{d},\frac{1}{p})$, observe that $\be \geq d(\frac1p-\frac1r)$ and consequently we have the Sobolev embedding $
\mathcal{W}^{\be,p}(\mathbb{R}^d)\hookrightarrow L^{r}(\mathbb{R}^d)$ that leads to
\begin{eqnarray*}
\Big\|\int_0^t e^{(t-\tau)\Delta}(f_1 \cdot f_2(\tau))d\tau\Big\|_{\mathcal{W}^{\be,p}}&\lesssim&\int_0^t(t-\tau)^{-\frac{d}{2r}}\|f_1(\tau)\|_{\mathcal{W}^{\be,p}}\|f_2(\tau)\|_{\mathcal{W}^{\be,p}}d\tau\\
&\lesssim& T^{1-\frac{d}{2r}}\|f_1\|_{X^{\be,p}(T)}\|f_2\|_{X^{\be,p}(T)}\\
&\lesssim& T^{1-\frac{d}{2p}+\frac{\be}{2}}\|f_1\|_{X^{\be,p}(T)}\|f_2\|_{X^{\be,p}(T)}.
\end{eqnarray*}
Taking the supremum over $t \in [0,T],$ we obtain the desired conclusion:
$$\Big\|\int_0^t e^{(t-\tau)\Delta}(f_1 \cdot f_2(\tau))d\tau\Big\|_{X^{\be,p}(T)}\lesssim T^{1-\frac{d}{2p}+\frac{\be}{2}}\|f_1\|_{X^{\be,p}(T)}\|f_2\|_{X^{\be,p}(T)}.$$
\end{proof}

\smallskip

\begin{proof}[Proof of Proposition~\ref{control-regular}]
Let us bound each of the four terms in the expression of $\Gamma_{T, \luxor}$:
\begin{multline*}
\Gamma_{T, \luxor}(v)_{t}:=e^{t\Delta}\phi+\int_0^t e^{(t-\tau)\Delta}(\rho^2 v_{\tau}^2)d\tau+2\int_0^t e^{(t-\tau)\Delta}(\rho v_{\tau}\cdot\luxor_{\tau})\, d\tau\\
+\int_0^t e^{(t-\tau)\Delta}(\luxor_{\tau}^2)\, d\tau\, , \quad t\in [0,T] \, .
\end{multline*}
On the one hand, for all $0\leq t \leq T$, using the expression of the heat semigroup, it holds that $\|e^{t\Delta}\phi\|_{\mathcal{W}^{\be,p}(\mathbb{R}^d)}\lesssim \|\phi\|_{\mathcal{W}^{\be,p}(\mathbb{R}^d)},$ yielding to
\begin{equation}\label{est1}
\|e^{t\Delta}\phi\|_{X^{\be,p}(T)}\lesssim \|\phi\|_{\mathcal{W}^{\be,p}(\mathbb{R}^d)}.
\end{equation}
On the other hand, the bound concerning the three other terms is a straight consequence from Lemma \ref{techlemm} and provides us with the bound (\ref{bound1}) we are looking for. The second one (\ref{bound2}) can be obtained with quite the same arguments.
\end{proof}

\section{Analysis of the deterministic equation under condition \textbf{(H2)}}\label{sec:irreg-case-det}

The objective of this section is to cope with the wellposedness issue in the rough situation, that is when condition~\eqref{cond-hurst-psi-2} on the Hurst indexes is verified. We recall that in this rough case, the model is understood in the meaning of Definition~\ref{defi:sol}, that is as
\begin{multline}\label{ccl-sto}
v_t=e^{t\Delta}\phi+\int_0^t e^{(t-\tau)\Delta}(\rho^2 v_{\tau}^2)d\tau+2\int_0^t e^{(t-\tau)\Delta}(\rho v_{\tau}\cdot\rho \<Psi>_{\tau})\, d\tau\\
+\int_0^t e^{(t-\tau)\Delta}(\rho^2 \<Psi2>_{\tau})\, d\tau\, , \quad t\in [0,T] \, .
\end{multline}
where the processes $\rho \<Psi>$ and $\rho^2 \<Psi2>$ are those defined in Proposition~\ref{sto} and Proposition~\ref{sto1}.

\smallskip

In order to handle~\eqref{ccl-sto}, we intend to follow the same deterministic strategy as in Section~\ref{sec:regular}. To put it differently, we will consider the pair $(\rho \<Psi>,\rho^2 \<Psi2>)$ as a fixed element in the subspace
\begin{equation}\label{defi-space-e-al}
\mathcal{R}_{\al,p}:=L^\infty\big([0,T];\mathcal{W}^{-\alpha,p}(\mathbb{R}^d)\big)\times L^\infty\big([0,T];\mathcal{W}^{-2\alpha,p}(\mathbb{R}^d)\big) \, ,
\end{equation}
where $0<\al<\frac{1}{4}$ (coming from Propositions~\ref{sto} and~\ref{sto1}), $p\geq 2$ and then the aim will be to solve the more general deterministic equation: for $(\luxor,\cherry) \in \mathcal{R}_{\alpha,p}$,

\begin{multline}\label{ccl}
v_t=e^{t\Delta}\phi+\int_0^t e^{(t-\tau)\Delta}(\rho^2 v_{\tau}^2)d\tau+2\int_0^t e^{(t-\tau)\Delta}(\rho v_{\tau}\cdot\luxor_{\tau})\, d\tau\\
+\int_0^t e^{(t-\tau)\Delta}(\cherry_{\tau})\, d\tau\, , \quad t\in [0,T] \, .
\end{multline}

The main originality of the situation (compared to Section~\ref{sec:regular}) is the lack of regularity of $\luxor_\tau$ and $\cherry_\tau$, that can only be seen as negative-order Sobolev terms (remark indeed that $\al >0$ in~\eqref{defi-space-e-al}).

\subsection{Regularising effect of the heat semigroup}

\

\smallskip

The use of the fractional Sobolev spaces allows us to quantify the regularising effect of the heat semigroup (see \cite{pazy}).

\begin{proposition}\label{heat}
Let $1<p<\infty$. For any $s_1<s_2$, there exists a constant $C(s_1,s_2)<\infty$ such that
$$\|e^{t\Delta}\phi\|_{\mathcal{W}^{s_2,p}(\mathbb{R}^d)}\leq C(s_1,s_2) (1+t^{-\frac{s_2-s_1}{2}})\|\phi\|_{\mathcal{W}^{s_1,p}(\mathbb{R}^d)}$$
holds for all $\phi \in \mathcal{W}^{s_1,p}(\mathbb{R}^d)$ and $t>0$.
\end{proposition}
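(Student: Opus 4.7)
The plan is to reduce the inequality to a pure $L^p \to L^p$ bound for the operator $(1-\Delta)^{\alpha/2} e^{t\Delta}$ with $\alpha := s_2 - s_1 > 0$, and then to treat the small-time and large-time regimes separately, using semigroup theory in the former and a direct Fourier-multiplier argument in the latter.

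First, recall that the Bessel potential $(1-\Delta)^{s/2}$ and the heat semigroup $e^{t\Delta}$ are both Fourier multipliers and therefore commute, and that by definition $(1-\Delta)^{s/2}$ acts as an isometry from $\mathcal{W}^{s,p}(\R^d)$ onto $L^p(\R^d)$. Writing $\psi := (1-\Delta)^{s_1/2}\phi$ (so that $\|\psi\|_{L^p} = \|\phi\|_{\mathcal{W}^{s_1,p}}$), I would use
$$\|e^{t\Delta}\phi\|_{\mathcal{W}^{s_2,p}} \, = \, \|(1-\Delta)^{s_2/2} e^{t\Delta} \phi\|_{L^p} \, = \, \|(1-\Delta)^{\alpha/2} e^{t\Delta} \psi\|_{L^p} ,$$
so that it suffices to establish the operator bound
$$\bigl\|(1-\Delta)^{\alpha/2} e^{t\Delta} \bigr\|_{L^p \to L^p} \, \lesssim \, 1 + t^{-\alpha/2}, \qquad t > 0.$$

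For $0 < t \leq 1$, I would exploit the shift identity $e^{t\Delta} = e^{t}\, e^{-t(1-\Delta)}$ to rewrite
$$(1-\Delta)^{\alpha/2} e^{t\Delta} \, = \, e^{t}\, (1-\Delta)^{\alpha/2} e^{-t(1-\Delta)}.$$
The operator $B := 1-\Delta$ is sectorial on $L^p(\R^d)$ for every $1 < p < \infty$ and its spectrum lies in $[1,\infty)$, so $0$ belongs to its resolvent set. The classical fractional-power estimate for analytic semigroups generated by such an operator (see Pazy, Chap.~2, Theorem 6.13) gives $\|B^{\alpha/2} e^{-tB}\|_{L^p \to L^p} \lesssim t^{-\alpha/2}$, and since $e^{t} \leq e$ on $[0,1]$, this yields the desired $t^{-\alpha/2}$ bound in this regime.

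For $t \geq 1$, I would instead use the factorization
$$(1-\Delta)^{\alpha/2} e^{t\Delta} \, = \, \bigl[(1-\Delta)^{\alpha/2} e^{\Delta}\bigr] \circ e^{(t-1)\Delta}.$$
The second factor is convolution against the heat kernel $p_{t-1}$, whose $L^1$-norm equals one, and so is a contraction on $L^p$. The first factor is the Fourier multiplier with symbol $m_1(\xi) := (1+|\xi|^2)^{\alpha/2} e^{-|\xi|^2}$, which is a Schwartz function on $\R^d$; its inverse Fourier transform is therefore Schwartz as well, in particular integrable, and Young's inequality furnishes an $L^p \to L^p$ bound depending only on $\alpha$ and $d$. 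Concatenating this with the contractivity of $e^{(t-1)\Delta}$ gives a uniform-in-$t$ bound on $[1,\infty)$. Combining with the previous paragraph closes the proof. The only delicate point I foresee is the appeal to the abstract fractional-power estimate for the sectorial operator $1-\Delta$ on $L^p$; everything else is elementary Fourier analysis.
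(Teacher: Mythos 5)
Your proof is correct and is essentially the route the paper intends: the paper gives no argument of its own and simply cites Pazy, and your key step --- the bound $\|(1-\Delta)^{\alpha/2}e^{-t(1-\Delta)}\|_{L^p\to L^p}\lesssim t^{-\alpha/2}$ for the analytic semigroup generated by $-(1-\Delta)$ with $0$ in the resolvent set --- is exactly the fractional-power estimate from that reference, supplemented by the routine reduction via Bessel potentials and the elementary large-time multiplier bound. The only detail worth making explicit is that the abstract fractional power of $1-\Delta$ on $L^p$ coincides with the Fourier multiplier $(1+|\xi|^2)^{\alpha/2}$ used to define $\mathcal{W}^{s,p}(\mathbb{R}^d)$, which is standard.
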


\subsection{Pointwise multiplication}\label{subsec:point-mult-inter}

\

\smallskip

The solution we propose to deal with the product $(\rho v_{\tau})\cdot(\luxor_{\tau})$ in~\eqref{ccl} will be to resort to the following general multiplication property in Sobolev spaces (see e.g.~\cite[Section 4.4.3]{prod} for a proof of this result) which garantees that $(\rho v_{\tau})\cdot(\luxor_{\tau})$ exists and inherits the worst regularity. More precisely:

\begin{lemma}\label{lem:product}
Fix $d\geq 1$. Let $\al,\be >0$ and $1 \leq p,p_1,p_2< \infty$ be such that
$$\frac{1}{p}= \frac{1}{p_1}+\frac{1}{p_2} \quad \text{and} \quad 0<\al<\be  \, .$$
If $f\in \cw^{-\al,p_1}(\R^d)$ and $g\in \cw^{\be,p_2}(\R^d)$, then $f\cdot g\in \cw^{-\al,p}(\R^d)$ and
$$
\| f\cdot g\|_{\cw^{-\al,p}} \lesssim \|f\|_{\cw^{-\al,p_1}} \| g\|_{\cw^{\be,p_2}} \ .$$

\end{lemma}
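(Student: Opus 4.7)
My approach would rely on Littlewood-Paley theory combined with Bony's paraproduct decomposition, the natural tool for estimating products of distributions in negative-order Sobolev scales. Let $\{\Delta_j\}_{j \geq -1}$ be a standard dyadic resolution of unity with partial sums $S_j:=\sum_{k\leq j-1}\Delta_k$, and recall the Triebel-Lizorkin characterization
$$\|h\|_{\cw^{s,q}} \sim \Big\|\Big(\sum_j 4^{js}|\Delta_j h|^2\Big)^{1/2}\Big\|_{L^q} \qquad (1<q<\infty,\ s\in \R).$$
Bony's decomposition splits the product as $fg = T_f g + T_g f + R(f,g)$, where $T_f g := \sum_j S_{j-2}f \cdot \Delta_j g$, $T_g f := \sum_j S_{j-2}g \cdot \Delta_j f$, and $R(f,g):=\sum_{|j-k|\leq 1}\Delta_j f\cdot \Delta_k g$. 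The plan is then to control each of the three pieces separately in $\cw^{-\al,p}$.

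For the high-low paraproduct $T_g f$, each summand is spectrally supported in an annulus $|\xi|\sim 2^j$, so by the characterization above, H\"older's inequality, and the vector-valued Fefferman-Stein maximal inequality,
$$\|T_g f\|_{\cw^{-\al,p}} \lesssim \Big\|\Big(\sum_j 4^{-j\al}|S_{j-2}g|^2|\Delta_j f|^2\Big)^{1/2}\Big\|_{L^p} \lesssim \Big\|\sup_j|S_{j-2}g|\Big\|_{L^{p_2}}\cdot \|f\|_{\cw^{-\al,p_1}},$$
and the supremum is controlled by the Hardy-Littlewood maximal function $Mg$, which gives $\|g\|_{L^{p_2}} \lesssim \|g\|_{\cw^{\be,p_2}}$ since $\be>0$. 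The symmetric low-high piece $T_f g$ is treated by pulling the negative-order weight onto $f$: one uses $\|\sup_j 4^{-j\al/2}|S_{j-2}f|\|_{L^{p_1}}\lesssim \|f\|_{\cw^{-\al,p_1}}$ and pairs it with $\big\|(\sum_j 4^{j\al}|\Delta_j g|^2)^{1/2}\big\|_{L^{p_2}}\lesssim \|g\|_{\cw^{\al,p_2}}$, then invokes $\al<\be$ to bound the latter by $\|g\|_{\cw^{\be,p_2}}$.

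The resonant term $R(f,g)$ is the delicate one, since its summands are merely spectrally supported in balls $|\xi|\lesssim 2^j$ rather than annuli. Applying $\Delta_\ell$, using Bernstein's inequality and H\"older yields
$$\|\Delta_\ell R(f,g)\|_{L^p}\lesssim \sum_{j\geq \ell-N} \|\Delta_j f\|_{L^{p_1}}\|\widetilde{\Delta}_j g\|_{L^{p_2}},$$
where $\widetilde{\Delta}_j := \Delta_{j-1}+\Delta_j+\Delta_{j+1}$. Multiplying by $2^{-\ell\al}$, inserting and redistributing a factor $2^{j(\be-\al)}\cdot 2^{-j(\be-\al)}$, and applying Cauchy-Schwarz in $j$ together with the strict inequality $\al<\be$ (which ensures convergence of the geometric tail $\sum_j 2^{-j(\be-\al)}$), one recovers after taking the $\ell^2$ norm in $\ell$ the desired bound $\|f\|_{\cw^{-\al,p_1}}\|g\|_{\cw^{\be,p_2}}$.

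The main obstacle is neither paraproduct but the resonant term: the gap condition $\al<\be$ enters precisely there, and without it the dyadic summation in $j$ diverges. A secondary technical point is that the Triebel-Lizorkin characterization of $\cw^{s,p}$ requires $1<p<\infty$, so the endpoint case $p=1$ allowed in the statement would need a separate treatment (e.g.\ via Besov interpolation); in all applications of this lemma in Section~\ref{sec:irreg-case-det}, one has $p\geq 2$, which is safely in range. An entirely equivalent route---the one followed in the reference \cite{prod} cited in the paper---is to encode the whole argument in Besov-space language via the general pointwise-multiplication tables developed there, and given the narrow use of the result here I would simply quote that reference in the final write-up.
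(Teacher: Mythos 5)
Your proposal is correct in substance, but note that the paper itself contains no proof of Lemma~\ref{lem:product}: it simply invokes the multiplication tables of Runst--Sickel \cite[Section 4.4.3]{prod}, i.e.\ exactly the shortcut you mention in your closing sentence. What you supply is the standard self-contained Bony-decomposition proof of that cited result, and it has the genuine merit of exposing where the hypothesis $0<\al<\be$ is really used, namely only in the resonant term $R(f,g)$, the two paraproducts being bounded in $\cw^{-\al,p}$ for any $\al,\be>0$. Two small repairs would be needed in a full write-up. First, in the low--high piece $T_f g$ your weights do not match: the squared factors $\big(4^{-j\al/2}\big)^2\cdot 4^{j\al}=1$ reproduce no weight at all rather than the required $4^{-j\al}$; you should either pair $\sup_j 4^{-j\al}|S_{j-2}f|$ with the $\cw^{\al,p_2}$ square function of $g$, or pair $\sup_j 2^{-j\al}|S_{j-2}f|$ with the unweighted square function (hence $\|g\|_{L^{p_2}}$) --- both suprema are controlled by $\|f\|_{\cw^{-\al,p_1}}$ since $\al>0$, as you indicate. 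Second, taking the $\ell^2$ norm in $\ell$ of $2^{-\ell\al}\|\Delta_\ell R(f,g)\|_{L^p}$ yields the Besov norm $B^{-\al}_{p,2}$, not directly the Sobolev (Triebel--Lizorkin $F^{-\al}_{p,2}$) norm; this is harmless because your computation actually places $R(f,g)$ in $B^{\be-\al}_{p,\infty}$ with $\be-\al>-\al$, and a Besov embedding with strict regularity gain lands in $\cw^{-\al,p}$. Your caveat about the endpoints $p=1$ or $p_i=1$ (where the maximal and square-function estimates fail) is well taken and applies equally to the statement as quoted from \cite{prod}; in all uses of the lemma in Section~\ref{sec:irreg-case-det} one has $p, p_1, p_2\in(1,\infty)$, so nothing is lost.
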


\subsection{About the resolution of the auxiliary deterministic equation}\label{subsec:solving-irreg}

\

\smallskip

For all $T\geq 0$, $\al,\be>0$ and $p\geq 2$, introduce the space 
\begin{equation}\label{scale-spaces-x}
X^{\alpha,\be, p}(T):=\mathcal{C}([0,T]; \mathcal{W}^{-\alpha,p}(\mathbb{R}^d))\cap \mathcal{C}((0,T]; \mathcal{W}^{\be,p}(\mathbb{R}^d)) \, ,
\end{equation}
equipped with the norm
$$\|v\|_{X(T)}:=\|v\|_{X^{\al,\be,p}(T)}=\|v\|_{L^{\infty}_T \mathcal{W}^{-\al,p}}+\|v\|_{Y(T)}$$
where the $Y(T)$-seminorm is given by
$$\|v\|_{Y(T)}=\underset{0<t\leq T}{\sup} t^{\frac{\be+\al}{2}}\|v(t)\|_{\mathcal{W}^{\be,p}(\mathbb{R}^d)}.$$
Also, remember that the subspace $\mathcal{R}_{\al,p}$ has been defined in~\eqref{defi-space-e-al}.

\smallskip

To end with, we are in a position to formulate (and prove) the main result of this section:

\begin{theorem}\label{thm:noregular}
Suppose that $d\geq 1$ and $p\geq2$ is such that $\frac{d}{2p}<1.$
In addition, assume that $\al, \be >0$ verify
\begin{equation}\label{condi-al}
\al<\be<\min\bigg(2-\al-\frac{d}{p}, 2-2\al\bigg) .
\end{equation}
Then for every $\phi \in \mathcal{W}^{-\alpha,p}(\mathbb{R}^d)$ and $(\luxor,\cherry) \in \mathcal{R}_{\alpha,p}$, one can find a time $T>0$ for which equation~\eqref{ccl} admits a unique solution in the above-defined set $X^{\alpha,\be,p}(T)$.
\end{theorem}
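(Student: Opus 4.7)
My plan is to solve equation~\eqref{ccl} through a classical Picard iteration on a closed ball of $X^{\al,\be,p}(T)$, applied to the map $\Gamma_{T,\luxor,\cherry}$ defined by the right-hand side of~\eqref{ccl}. Mirroring the strategy of Section~\ref{sec:regular}, everything will reduce to establishing two estimates: a self-bound of the form
\begin{equation*}
\|\Gamma_{T,\luxor,\cherry}(v)\|_{X(T)}\lesssim \|\phi\|_{\cw^{-\al,p}}+\|\cherry\|_{L^\infty_T\cw^{-2\al,p}}+ T^\ep\Big[\|v\|_{X(T)}^2+\|\luxor\|_{L^\infty_T\cw^{-\al,p}}\|v\|_{X(T)}+\|\luxor\|_{L^\infty_T\cw^{-\al,p}}^2\Big],
\end{equation*}
together with an analogous Lipschitz bound on $\Gamma_{T,\luxor,\cherry}(v_1)-\Gamma_{T,\luxor,\cherry}(v_2)$, both valid for some uniform $\ep>0$. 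Once these are at hand, picking $T$ small produces a stable ball and a strict contraction, hence a unique fixed point.

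Each of the four terms must be controlled in both pieces of the $X(T)$-norm. The linear contribution $e^{t\Delta}\phi$ is handled directly by Proposition~\ref{heat} with $s_1=-\al$, $s_2=\be$, which supplies precisely the weight $t^{-(\be+\al)/2}$ needed to absorb the $Y(T)$-seminorm. The second-order term $\int_0^t e^{(t-\tau)\Delta}\cherry_\tau\,d\tau$ is treated again by Proposition~\ref{heat}: lifting from $\cw^{-2\al,p}$ to $\cw^{\be,p}$ costs $(t-\tau)^{-(\be+2\al)/2}$, which is integrable in $\tau$ precisely under the second bound $\be<2-2\al$ in~\eqref{condi-al} and yields a factor $T^\ep$. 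The purely quadratic contribution $\int_0^t e^{(t-\tau)\Delta}(\rho^2 v_\tau^2)\,d\tau$ is controlled by combining the Kato--Ponce inequality (Lemma~\ref{lem:frac-leibniz}) with the Sobolev embedding $\cw^{\be,p}\hookrightarrow L^r$ for a suitable $r$ (compatible with $\frac{d}{2p}<1$) to bound $\|\rho^2 v_\tau^2\|_{\cw^{\be,q}}\lesssim \|v_\tau\|_{\cw^{\be,p}}^2$ for some $q<p$, and with Proposition~\ref{estimate} to recover the $\cw^{\be,p}$ scale.

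The genuinely delicate contribution is the mixed term $\int_0^t e^{(t-\tau)\Delta}(\rho v_\tau\cdot \luxor_\tau)\,d\tau$, for which one exploits Lemma~\ref{lem:product} with $p_1=p_2=p$ and the assumption $\al<\be$ to get
\begin{equation*}
\|\rho v_\tau\cdot \luxor_\tau\|_{\cw^{-\al,p/2}}\lesssim \|\rho v_\tau\|_{\cw^{\be,p}}\|\luxor_\tau\|_{\cw^{-\al,p}}\lesssim \tau^{-\frac{\be+\al}{2}}\|v\|_{X(T)}\|\luxor\|_{L^\infty_T\cw^{-\al,p}}.
\end{equation*}
Combining Propositions~\ref{estimate} and~\ref{heat} then lifts $\cw^{-\al,p/2}$ to $\cw^{\be,p}$ at a total cost $(t-\tau)^{-\frac{d}{2p}-\frac{\be+\al}{2}}$, and a Beta-integral computation produces a factor $t^{1-\frac{d}{2p}-(\be+\al)}$; after multiplication by the $Y(T)$-weight $t^{(\be+\al)/2}$ this becomes $t^{1-\frac{d}{2p}-\frac{\be+\al}{2}}$, which is of the form $T^\ep$ precisely under the first bound $\be<2-\al-\frac{d}{p}$ in~\eqref{condi-al}. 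The same estimate, used with $s_1=s_2=-\al$ in Proposition~\ref{estimate}, simultaneously controls the $L^\infty_T\cw^{-\al,p}$-piece under the same constraint.

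The main obstacle is precisely this mixed term: one must simultaneously pay a smoothing cost $(t-\tau)^{-(\be+\al)/2}$, an integrability cost $(t-\tau)^{-d/(2p)}$ arising from the fact that the product $\rho v_\tau\cdot\luxor_\tau$ lies only in the worse-integrability space $\cw^{-\al,p/2}$, and absorb the blow-up $\tau^{-(\be+\al)/2}$ of the $Y(T)$-seminorm near $0$. The triple condition~\eqref{condi-al} is exactly what makes all three effects compatible while still leaving a positive power of $T$. The Lipschitz bound on $\Gamma(v_1)-\Gamma(v_2)$ is obtained by replicating these same chains of estimates on differences, and a standard Picard argument then concludes.
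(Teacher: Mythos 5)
Your overall architecture (Picard iteration on a ball of $X^{\al,\be,p}(T)$, with a self-bound and a Lipschitz bound for $\Gamma_{T,\luxor,\cherry}$, each of the four terms controlled separately in the $L^\infty_T\cw^{-\al,p}$ and $Y(T)$ pieces) is exactly the paper's, and your treatment of the linear term, of the $\cherry$-term, and of the mixed term $\int_0^t e^{(t-\tau)\Delta}(\rho v_\tau\cdot\luxor_\tau)\,d\tau$ coincides with the proof of Proposition~\ref{last}. There is, however, a genuine gap in your treatment of the purely quadratic term $\int_0^t e^{(t-\tau)\Delta}(\rho^2 v_\tau^2)\,d\tau$. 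You propose to bound $\|\rho^2 v_\tau^2\|_{\cw^{\be,q}}\lesssim\|v_\tau\|_{\cw^{\be,p}}^2$ via Kato--Ponce and the embedding $\cw^{\be,p}\hookrightarrow L^r$. But the space $X(T)$ only controls $\|v_\tau\|_{\cw^{\be,p}}$ through the weighted seminorm, i.e. $\|v_\tau\|_{\cw^{\be,p}}\lesssim \tau^{-\frac{\be+\al}{2}}\|v\|_{Y(T)}$; and since $\cw^{-\al,p}$ with $\al>0$ embeds into no Lebesgue space, every factor sent through Kato--Ponce must be paid for at the $\cw^{\be,p}$ level. Your bound therefore carries a singularity $\tau^{-(\be+\al)}$, which is integrable near $\tau=0$ only if $\be+\al<1$ --- a condition \emph{not} implied by~\eqref{condi-al}: for instance $d=1$, $p=2$, $\al=1/5$ allows $\be=6/5$, hence $\be+\al=7/5$. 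So your argument for this term breaks down on a nonempty part of the admissible parameter range.

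The repair is to treat the quadratic term exactly as you (correctly) treat the mixed term: apply Lemma~\ref{lem:product} rather than Kato--Ponce, with one factor of $\rho v_\tau$ measured in $\cw^{\be,p}$ and the other in $\cw^{-\al,p}$, so that
$\|\rho^2 v_\tau^2\|_{\cw^{-\al,p/2}}\lesssim\|v_\tau\|_{\cw^{\be,p}}\|v_\tau\|_{\cw^{-\al,p}}\lesssim \tau^{-\frac{\be+\al}{2}}\|v\|_{Y(T)}\|v\|_{L^\infty_T\cw^{-\al,p}}$,
leaving only a single, integrable singularity; the split semigroup $e^{(t-\tau)\Delta}=e^{\frac{t-\tau}{2}\Delta}e^{\frac{t-\tau}{2}\Delta}$ then lifts $\cw^{-\al,p/2}$ to $\cw^{\be,p}$ at cost $(t-\tau)^{-\frac{d}{2p}-\frac{\be+\al}{2}}$, and the same Beta-integral count as for your mixed term yields $T^{1-\frac{d}{2p}-\frac{\be+\al}{2}}\|v\|_{X(T)}^2$. (Two harmless inaccuracies aside: your self-bound contains a spurious $\|\luxor\|_{L^\infty_T\cw^{-\al,p}}^2$ term that corresponds to no term of~\eqref{ccl}, and the paper actually obtains the $\cherry$-contribution with a factor $T^{1-\frac{\al}{2}}$ inside the bracket; neither affects the fixed-point argument.)
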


In the same way as in Section~\ref{subsec:determ-regu}, the proof of Theorem~\ref{thm:noregular} is of course a direct consequence of the estimates below for the map $\Gamma_{T, \luxor,\cherry}$ defined for all $T\geq 0$ and $(\luxor,\cherry)\in \mathcal{R}_{\al,p}$ by
\begin{multline}
\Gamma_{T, \luxor,\cherry}(v):=e^{t\Delta}\phi+\int_0^t e^{(t-\tau)\Delta}(\rho^2 v_{\tau}^2)d\tau+2\int_0^t e^{(t-\tau)\Delta}(\rho v_{\tau}\cdot\luxor_{\tau})\, d\tau\\
+\int_0^t e^{(t-\tau)\Delta}(\cherry_{\tau})\, d\tau\, , \quad t\in [0,T] \, .
\end{multline}

\begin{proposition}\label{last}
Suppose that $d\geq 1$ and $p\geq2$ is such that $\frac{d}{2p}<1.$ In addition, assume that $\al, \be >0$ verify condition \eqref{condi-al}. Then there exists $\varepsilon >0$ such that, setting $X(T):=X^{\alpha,\be,p}(T)$, the following bounds hold true: for all $0\leq T\leq 1$, $\phi \in \mathcal{W}^{-\alpha,p}(\mathbb{R}^d), (\luxor_1,\cherry_1) \in \mathcal{R}_{\alpha,p},  (\luxor_2,\cherry_2) \in \mathcal{R}_{\alpha,p}$ and $v, v_1, v_2 \in X(T)$,

\smallskip

\begin{equation}
\|\Gamma_{T, \luxor_1,\cherry_1}(v)\|_{X(T)}\lesssim  \|\phi\|_{\mathcal{W}^{-\alpha,p}}+T^{\varepsilon}\Big[\|v\|_{X(T)}^2+\|\luxor_1\|_{L^\infty_T \mathcal{W}^{-\alpha,p}} \|v\|_{X(T)}+\|\cherry_1\|_{L^\infty_T \mathcal{W}^{-2\alpha,p}}\Big]\, , \label{bound7}
\end{equation}
and
\begin{align}
&\|\Gamma_{T, \luxor_1, \cherry_1}(v_1)-\Gamma_{T, \luxor_2, \cherry_2}(v_2)\|_{X(T)}
\nonumber \\
&\lesssim \ T^{\varepsilon}\Big[\|v_1-v_2\|_{X(T)}\{\|v_1\|_{X(T)}+\|v_2\|_{X(T)}\}+\|\luxor_1-\luxor_2\|_{L^\infty_T \mathcal{W}^{-\alpha,p}}\|v_1\|_{X(T)}\nonumber\\
&\hspace{4cm}+\|\luxor_2\|_{L^\infty_T \mathcal{W}^{-\alpha,p}}\| v_1-v_2\|_{X(T)}+\|\cherry_1-\cherry_2\|_{L^\infty_T \mathcal{W}^{-2\alpha,p}}\Big]\label{bound8} \, , 
\end{align}
where the proportional constants depend only on $\rho$, $p$, $\al$ and $\be$.
\end{proposition}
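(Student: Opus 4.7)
The plan is to estimate each of the four summands in $\Gamma_{T,\luxor,\cherry}(v)$ separately, in both components of the $X(T)$-norm (the $L^{\infty}_T\mathcal{W}^{-\al,p}$ piece and the weighted $Y(T)$-piece with weight $t^{(\be+\al)/2}$), and then deduce (\ref{bound8}) from the bilinear/linear structure of the nonlinearities. For the initial-condition term $e^{t\Delta}\phi$, both bounds follow directly from Proposition~\ref{heat}; in particular a gain of exactly $\be+\al$ derivatives yields $t^{(\be+\al)/2}\|e^{t\Delta}\phi\|_{\mathcal{W}^{\be,p}}\lesssim\|\phi\|_{\mathcal{W}^{-\al,p}}$ uniformly in $t\in(0,T]$.

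For the quadratic term $\int_0^t e^{(t-\tau)\Delta}(\rho^2 v_\tau^2)\,d\tau$, the argument of Lemma~\ref{techlemm} applies \emph{mutatis mutandis}: Kato-Ponce (Lemma~\ref{lem:frac-leibniz}) bounds $\|\rho^2 v_\tau^2\|_{\mathcal{W}^{\be,q}}$ by terms of the form $\|v_\tau\|_{\mathcal{W}^{\be,p}}\|v_\tau\|_{L^r}$ for suitable $q\leq p$ and $r$, Sobolev embedding absorbs the $L^r$ factor into $\|v_\tau\|_{\mathcal{W}^{\be,p}}$ (using $d/(2p)<1$), Proposition~\ref{heat} lifts the result to either $\mathcal{W}^{-\al,p}$ or $\mathcal{W}^{\be,p}$, and integrating the resulting $(t-\tau)^{-\gamma}\tau^{-(\be+\al)}$ kernel produces a factor $T^{\varepsilon}\|v\|_{X(T)}^2$. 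The genuinely new step, compared with Section~\ref{sec:regular}, is the mixed product $\int_0^t e^{(t-\tau)\Delta}(\rho v_\tau\cdot\luxor_\tau)\,d\tau$: since $\luxor_\tau$ is only a distribution of negative order, I would invoke Lemma~\ref{lem:product}---whose applicability rests on the first inequality $\al<\be$ of (\ref{condi-al})---to obtain
\[\|\rho v_\tau\cdot\luxor_\tau\|_{\mathcal{W}^{-\al,q}}\lesssim\|\luxor_\tau\|_{\mathcal{W}^{-\al,p}}\|\rho v_\tau\|_{\mathcal{W}^{\be,p}}\]
for $q<p$ with $1/q=2/p$, and then combine Proposition~\ref{heat} with Proposition~\ref{estimate} (to pass from $L^q$ to $L^p$) in order to reach both target spaces.

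The inhomogeneous term $\int_0^t e^{(t-\tau)\Delta}\cherry_\tau\,d\tau$ is handled by Proposition~\ref{heat} alone: the natural kernel is $(t-\tau)^{-(s+2\al)/2}$ when measuring in $\mathcal{W}^{s,p}$, which is integrable for free when $s=-\al$ (since $\al<1/4$), and integrable---with a positive power of $T$ to spare after multiplication by $t^{(\be+\al)/2}$---exactly when $\be<2-2\al$, i.e.\ under the second constraint of (\ref{condi-al}). The Lipschitz bound (\ref{bound8}) is then immediate by rerunning these same three estimates on the factored differences $v_1^2-v_2^2=(v_1-v_2)(v_1+v_2)$, $\rho v_1\luxor_1-\rho v_2\luxor_2=\rho(v_1-v_2)\luxor_1+\rho v_2(\luxor_1-\luxor_2)$, and $\cherry_1-\cherry_2$. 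The main technical obstacle lies in the mixed-product step: the Sobolev exponents must be chosen so that (i) Lemma~\ref{lem:product} applies, (ii) the Sobolev embedding on $\rho v_\tau$ is admissible, and (iii) the time kernel $(t-\tau)^{-\delta}\tau^{-(\be+\al)/2}$ remains integrable with a positive power of $T$ left over; the upper bound $\be<2-\al-d/p$ in (\ref{condi-al}) is precisely what reconciles these three compatibility conditions simultaneously.
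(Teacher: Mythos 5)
Your overall architecture (term-by-term estimates in both components of the $X(T)$-norm, then the Lipschitz bound by bilinearity) is the paper's, and your treatment of $e^{t\Delta}\phi$, of the mixed product $\rho v_\tau\cdot\luxor_\tau$ via Lemma~\ref{lem:product}, and of $\int_0^t e^{(t-\tau)\Delta}\cherry_\tau\,d\tau$ all match the actual proof. However, there is a genuine gap in your handling of the quadratic term $\int_0^t e^{(t-\tau)\Delta}(\rho^2 v_\tau^2)\,d\tau$. You import Lemma~\ref{techlemm} and estimate $\|\rho^2 v_\tau^2\|_{\mathcal{W}^{\be,q}}$ by Kato--Ponce, placing \emph{both} factors in $\mathcal{W}^{\be,p}$ (after Sobolev embedding). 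Since $\|v_\tau\|_{\mathcal{W}^{\be,p}}\lesssim \tau^{-(\be+\al)/2}\|v\|_{Y(T)}$, this produces the singularity $\tau^{-(\be+\al)}$ that you yourself write down, and the integral $\int_0^t(t-\tau)^{-\gamma}\tau^{-(\be+\al)}\,d\tau$ diverges unless $\al+\be<1$. Condition~\eqref{condi-al} does not imply this: e.g.\ $d=2$, $p=4$, $\al=0.2$, $\be=1.2$ is admissible and gives $\al+\be=1.4$. So your argument only covers a strict sub-range of the stated parameters (enough, incidentally, for the existence part of Theorem~\ref{resu1} by taking $\be$ close to $\al$, but not for the proposition as stated).

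The paper avoids this by \emph{not} using Kato--Ponce here: it writes $\rho^2v_\tau^2=(\rho v_\tau)\cdot(\rho v_\tau)$ and applies the same negative-order product Lemma~\ref{lem:product} that you reserve for the mixed term, with one factor measured in $\mathcal{W}^{-\al,p}$ (controlled by $\|v\|_{L^\infty_T\mathcal{W}^{-\al,p}}$, hence with no time singularity) and the other in $\mathcal{W}^{\be,p}$ (contributing only $\tau^{-(\be+\al)/2}$). This yields $\|\rho^2v_\tau^2\|_{\mathcal{W}^{-\al,p/2}}\lesssim\|v_\tau\|_{\mathcal{W}^{\be,p}}\|v_\tau\|_{\mathcal{W}^{-\al,p}}$ and an integrable kernel $(t-\tau)^{-\frac{\be+\al}{2}-\frac{d}{2p}}\tau^{-\frac{\be+\al}{2}}$ under exactly the constraints of~\eqref{condi-al}; the point of working in the space $X^{\al,\be,p}(T)$ is precisely that the quadratic term must be treated asymmetrically, one factor at negative regularity and one at positive regularity, just like the mixed term. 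Replace your Kato--Ponce step by this and the rest of your argument goes through.
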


\begin{proof}[Proof of Proposition~\ref{last}]
Let us bound each of the four terms in the expression of $\Gamma_{T, \luxor,\cherry}$ separately.

\noindent
\textbf{Bound on $e^{t\Delta}\phi$:}
For all $0\leq t \leq T$, using the expression of the heat semigroup, it holds that $\|e^{t\Delta}\phi\|_{\mathcal{W}^{-\al,p}(\mathbb{R}^d)}\lesssim \|\phi\|_{\mathcal{W}^{-\al,p}(\mathbb{R}^d)},$ yielding to
$$\|e^{t\Delta}\phi\|_{L^{\infty}_T \mathcal{W}^{-\al,p}}\lesssim \|\phi\|_{\mathcal{W}^{-\al,p}(\mathbb{R}^d)}.$$
Now, thanks to Proposition \ref{heat}, for any $0<t\leq T$, $\|e^{t\Delta}\phi\|_{\mathcal{W}^{\be,p}(\mathbb{R}^d)}\lesssim t^{-\frac{\be+\al}{2}} \|\phi\|_{\mathcal{W}^{-\al,p}(\mathbb{R}^d)},$ which implies $t^{\frac{\be+\al}{2}}\|e^{t\Delta}\phi\|_{\mathcal{W}^{\be,p}(\mathbb{R}^d)}\lesssim \|\phi\|_{\mathcal{W}^{-\al,p}(\mathbb{R}^d)}$ and
$\|e^{t\Delta}\phi\|_{Y(T)}\lesssim \|\phi\|_{\mathcal{W}^{-\al,p}(\mathbb{R}^d)}$
leading to
\begin{equation}\label{esti1}
\|e^{t\Delta}\phi\|_{X(T)}\lesssim \|\phi\|_{\mathcal{W}^{-\al,p}(\mathbb{R}^d)}.
\end{equation}

\noindent
\textbf{Bound on $\int_0^t e^{(t-\tau)\Delta}(\rho^2 v_{\tau}^2)d\tau$:}
Let $0\leq t \leq T.$ Using successively Minkowsky inequality, Proposition \ref{estimate} and Lemma \ref{lem:product} (since $\al<\be$), we get

\begin{eqnarray*}
\Big\|\int_0^t e^{(t-\tau)\Delta}(\rho^2 v_{\tau}^2)d\tau\Big\|_{\mathcal{W}^{-\al,p}}&\leq&
 \int_0^t \|e^{(t-\tau)\Delta}(\rho^2 v_{\tau}^2)\|_{\mathcal{W}^{-\al,p}}d\tau\\
&\lesssim&\int_0^t\frac{1}{(t-\tau)^{\frac{d}{2p}}}\|\rho^2 v_{\tau}^2\|_{\mathcal{W}^{-\al,\frac{p}{2}}}d\tau \\
&\lesssim&\int_0^t\frac{1}{(t-\tau)^{\frac{d}{2p}}}\|v_{\tau}\|_{\mathcal{W}^{\be,p}}\|v_{\tau}\|_{\mathcal{W}^{-\al,p}}d\tau\\
&\lesssim& \int_0^t (t-\tau)^{-\frac{d}{2p}}\tau^{-\frac{\be+\al}{2}}d\tau\|v\|_{Y(T)}\|v\|_{L^{\infty}_T \mathcal{W}^{-\al,p}}\\
&\lesssim& T^{1-\frac{d}{2p}-\frac{\be+\al}{2}}\|v\|_{X(T)}^2.
\end{eqnarray*}
Taking the supremum over $t \in [0,T],$ we deduce:
$$\Big\|\int_0^t e^{(t-\tau)\Delta}(\rho^2 v_{\tau}^2)d\tau\Big\|_{L^{\infty}_T \mathcal{W}^{-\al,p}}\lesssim  T^{1-\frac{d}{2p}-\frac{\be+\al}{2}}\|v\|_{X(T)}^2.$$
Now, as $e^{(t-\tau)\Delta}=e^{\frac{t-\tau}{2}\Delta}e^{\frac{t-\tau}{2}\Delta}$, the heat kernel allows a gain of both regularity (Proposition \ref{heat}) and integrability (Proposition \ref{estimate}) showing that for all $0<t\leq T$,

\begin{eqnarray*}
\Big\|\int_0^t e^{(t-\tau)\Delta}(\rho^2 v_{\tau}^2)d\tau\Big\|_{\mathcal{W}^{\be,p}}&\leq&
 \int_0^t \|e^{(t-\tau)\Delta}(\rho^2 v_{\tau}^2)\|_{\mathcal{W}^{\be,p}}d\tau\\
&\lesssim& \int_0^t (t-\tau)^{-\frac{\be+\al}{2}}\|e^{\frac{t-\tau}{2}\Delta}(\rho^2 v_{\tau}^2)\|_{\mathcal{W}^{-\al,p}}d\tau\\
&\lesssim&\int_0^t (t-\tau)^{-\frac{\be+\al}{2}-\frac{d}{2p}}\|\rho^2 v_{\tau}^2\|_{\mathcal{W}^{-\al,\frac{p}{2}}}d\tau \\
&\lesssim&\int_0^t(t-\tau)^{-\frac{\be+\al}{2}-\frac{d}{2p}}\|v_{\tau}\|_{\mathcal{W}^{\be,p}}\|v_{\tau}\|_{\mathcal{W}^{-\al,p}}d\tau\\
&\lesssim& \int_0^t (t-\tau)^{-\frac{\be+\al}{2}-\frac{d}{2p}}\tau^{-\frac{\be+\al}{2}}d\tau\|v\|_{Y(T)}\|v\|_{L^{\infty}_T \mathcal{W}^{-\al,p}}\\
&\lesssim& t^{1-\frac{d}{2p}-\be-\al}\|v\|_{X(T)}^2,
\end{eqnarray*}
leading to
\begin{eqnarray*}
t^{\frac{\be+\al}{2}}\Big\|\int_0^t e^{(t-\tau)\Delta}(\rho^2 v_{\tau}^2)d\tau\Big\|_{\mathcal{W}^{\be,p}}&\lesssim& t^{1-\frac{d}{2p}-\frac{\be+\al}{2}}\|v\|_{X(T)}^2\\
&\lesssim&T^{1-\frac{d}{2p}-\frac{\be+\al}{2}}\|v\|_{X(T)}^2.
\end{eqnarray*}
Taking the supremum over $t \in (0,T]$, it holds 
$$\Big\|\int_0^t e^{(t-\tau)\Delta}(\rho^2 v_{\tau}^2)d\tau\Big\|_{Y(T)}\lesssim T^{1-\frac{d}{2p}-\frac{\be+\al}{2}}\|v\|_{X(T)}^2,$$
which entails
\begin{equation}\label{esti2}
\Big\|\int_0^t e^{(t-\tau)\Delta}(\rho^2 v_{\tau}^2)d\tau\Big\|_{X(T)}\lesssim T^{1-\frac{d}{2p}-\frac{\be+\al}{2}}\|v\|_{X(T)}^2.
\end{equation}

\noindent
\textbf{Bound on $\int_0^t e^{(t-\tau)\Delta}(\rho v_{\tau}\cdot\luxor_{\tau})\, d\tau$:}
By repeating the same arguments as above (one of two $\rho v$ being replaced by $\luxor$), we obtain
\begin{equation}\label{esti3}
\Big\|\int_0^t e^{(t-\tau)\Delta}(\rho v_{\tau}\cdot\luxor_{\tau})d\tau\Big\|_{X(T)}\lesssim T^{1-\frac{d}{2p}-\frac{\be+\al}{2}}\|v\|_{X(T)}\|\luxor\|_{L^{\infty}_T \mathcal{W}^{-\al,p}}.
\end{equation}

\noindent
\textbf{Bound on $\int_0^t e^{(t-\tau)\Delta}(\cherry_{\tau}) d\tau$:}
Fix $0\leq t \leq T$. Using successively Minkowsky inequality and Proposition \ref{heat}, we write
\begin{eqnarray*}
\Big\|\int_0^t e^{(t-\tau)\Delta}(\cherry_{\tau})d\tau\Big\|_{\mathcal{W}^{-\al,p}}&
\leq& \int_0^t \|e^{(t-\tau)\Delta}(\cherry_{\tau})\|_{\mathcal{W}^{-\al,p}}d\tau \\
&\lesssim& \int_0^t (t-\tau)^{-\frac{\al}{2}}\|\cherry_{\tau}\|_{\mathcal{W}^{-2\al,p}}d\tau \\
&\lesssim & T^{1-\frac{\al}{2}}\|\cherry\|_{L^{\infty}_T \mathcal{W}^{-2\al,p}},
\end{eqnarray*}
which entails
\begin{equation}\label{fin1}
\Big\|\int_0^t e^{(t-\tau)\Delta}(\cherry_{\tau})d\tau\Big\|_{L^{\infty}_T \mathcal{W}^{-\al,p}}\lesssim T^{1-\frac{\al}{2}}\|\cherry\|_{L^{\infty}_T \mathcal{W}^{-2\al,p}}.
\end{equation}
Then, for all $0<t\leq T$, thanks to the regularising effect of the heat semigroup and the assumption $\be+2\al<2$,
\begin{eqnarray*}
\Big\|\int_0^t e^{(t-\tau)\Delta}(\cherry_{\tau})d\tau\Big\|_{\mathcal{W}^{\be,p}}&
\leq& \int_0^t \|e^{(t-\tau)\Delta}(\cherry_{\tau})\|_{\mathcal{W}^{\be,p}}d\tau \\
&\lesssim& \int_0^t (t-\tau)^{-\frac{\be+2\al}{2}}\|\cherry_{\tau}\|_{\mathcal{W}^{-2\al,p}}d\tau \\
&\lesssim & t^{1-\frac{\be+2\al}{2}}\|\cherry\|_{L^{\infty}_T \mathcal{W}^{-2\al,p}},
\end{eqnarray*}
which leads to
$t^{\frac{\be+\al}{2}} \Big\|\displaystyle\int_0^t e^{(t-\tau)\Delta}(\cherry_{\tau})d\tau\Big\|_{\mathcal{W}^{\be,p}}\lesssim T^{1-\frac{\al}{2}} \|\cherry\|_{L^{\infty}_T \mathcal{W}^{-2\al,p}}$. Thus, 
\begin{equation}\label{fin2}
\Big\|\int_0^t e^{(t-\tau)\Delta}(\cherry_{\tau})d\tau\Big\|_{Y(T)}\lesssim T^{1-\frac{\al}{2}}\|\cherry\|_{L^{\infty}_T \mathcal{W}^{-2\al,p}}
\end{equation}
and, combining the identities (\ref{fin1}) and (\ref{fin2}),
\begin{equation}\label{esti4}
\Big\|\int_0^t e^{(t-\tau)\Delta}(\cherry_{\tau})d\tau\Big\|_{X(T)}\lesssim T^{1-\frac{\al}{2}}  \|\cherry\|_{L^{\infty}_T \mathcal{W}^{-2\al,p}}.
\end{equation}

\noindent
Putting together the four inequalities (\ref{esti1}), (\ref{esti2}), (\ref{esti3}) and (\ref{esti4}) provides us with the desired bound (\ref{bound7}). The second one (\ref{bound8}) can be obtained with similar arguments.

\end{proof}

\section{On the construction of the relevant stochastic objects}\label{stochastic}

The objective of this section is to prove Proposition \ref{sto}, Proposition \ref{sto1} and to give an asymptotic estimate of the quantity $\mathbb{E}\big[\<Psi>_n(t,x)^2\big] $ (Proposition \ref{prop:renorm-cstt}).
We recall that the space-time fractional white noise $B$ has been defined as a Gaussian process whose mean and covariance function are well-known (see Definition \ref{white}). This definition can seem a bit abstract and, in order to realize computations with this noise, we need a representation formula. We will resort to the harmonizable representation of the fractional Brownian motion presented in \cite{harmo}. Let us recall the procedure.
First of all, we fix, on some complete probability space $(\Omega,\mathcal{F},\mathbb{P})$, a space-time white noise $W$ on $\R^{d+1}$. Then let $H=(H_0,H_1,\dots,H_d)\in (0,1)^{d+1}$ and set, for any $t\in [0,T]$ and $x\in \R^d$,
\begin{equation}\label{harmo-repres}
B(t,x_1,\dots,x_d):= c_H\int_{\xi \in \mathbb{R}}\int_{\eta \in \mathbb{R}^d} \frac{e^{\imath t\xi}-1}{|\xi|^{H_0+\frac{1}{2}}}\prod_{i=1}^{d}\frac{e^{\imath x_i\eta_i}-1}{|\eta_i|^{H_i+\frac{1}{2}}}\, \widehat{W}(d\xi,d\eta)\, ,
\end{equation} 
where $c_H>0$ is a constant, and where $\widehat{W}$ stands for the Fourier transform of $W$. 
Then, for an appropriate value of $c_H$, $B$ is a space-time fractional Brownian motion of index $H$ (in the sense of Definition~\ref{white}). 

\smallskip

Unfortunately, we know that $B$ suffers from a lack of regularity measured by its Hurst index. That is why we are looking for a smooth approximation $(B_n)_{n\geq0}$. Our strategy is based on a truncation of the integration domain in~\eqref{harmo-repres}: precisely, we set $B_0\equiv 0$, and for $n\geq 1$,
\begin{equation}\label{defi:approx-b-n}
B_n(t,x_1,\dots,x_d)(\omega):= c_H\int_{|\xi|\leq 2^{2n}}^{}\int_{|\eta|\leq 2^n} \frac{e^{\imath t\xi}-1}{|\xi|^{H_0+\frac{1}{2}}}\prod_{i=1}^{d}\frac{e^{\imath x_i\eta_i}-1}{|\eta_i|^{H_i+\frac{1}{2}}}\, \widehat{W}(d\xi,d\eta)\, .
\end{equation}
It is then clear than $B_n$ defines a smooth process for any $n\geq 0$ and that for all $(t,x)\in [0,T]\times \R^d$, $B_n(t,x) \stackrel{n\to\infty}{\longrightarrow} B(t,x)$ in $L^2(\Omega)$ .

\subsection{Linear solution associated with the model}

\

\smallskip
With a view to constructing the first order process $\<Psi>$, the solution of the linear equation
\begin{equation}\label{linear1}
 \left\{
    \begin{array}{ll}
  \partial_t\<Psi>-\Delta \<Psi> = \dot B, \hskip 0.3 cm t \in [0,T], \hspace{0,2cm} x \in \mathbb{R}^d,   \\
 \  \<Psi>(0,.)=0,
    \end{array}
\right.
\end{equation}
we are now interested in giving a rigorous meaning to the sequence $(\<Psi>_n)_{n\geq 0}$ of the regularised version of (\ref{linear1}), that is the sequence of solutions to 
\begin{equation}\label{regu}
 \left\{
    \begin{array}{ll}
  \partial_t\<Psi>_n-\Delta \<Psi>_n = \dot B_n\, , \hskip 0.3 cm t \in [0,T]\, , \hspace{0,2cm} x \in \mathbb{R}^d\, ,   \\
 \  \<Psi>_n(0,.)=0\, ,
    \end{array}
\right.    
\end{equation}
where, for all $n\geq 0$, $\dot{B}_n$ denotes the classical derivative $\dot{B}_n:=\partial_t\partial_{x_1}\cdots \partial_{x_d}B_n$. Despite the regularity of $\dot B_n$, its {\it a priori} lack of integrability over the whole space $\mathbb{R}^d$ prevents us from using some general theorem which would guarantee the existence of $\<Psi>_n$. We thus propose a stochastic construction of $\<Psi>_n$. Should this latter exist, its expression would by given the formula
$$\<Psi>_n(t,x)=\int_0^t e^{(t-s)\Delta}(\dot B_n(s))(x)ds.$$

A formal computation leads us to the following expression:
\small
\begin{align*}
&\<Psi>_n(t,x)=\int_0^t e^{(t-s)\Delta}(\dot B_n(s))(x)ds\\
&=\int_0^t \mathcal{F}^{-1}(e^{-|\xi|^2 (t-s)}\widehat{\dot B_n}(s,\xi))(x)ds\\
&=\int_0^t \mathcal{F}^{-1}(e^{-|\xi|^2 (t-s)})\star \dot B_n(s,x)ds\\
&=\int_0^t \int_{\mathbb{R}^d}\mathcal{F}^{-1}(e^{-|\xi|^2 (t-s)})(x-y)\dot B_n(s,y)dyds\\
&= c_H\int_{0}^{t}ds\int_{\mathbb{R}^d}dy\, \int_{|\xi|\leq 2^{2n}}\int_{|\eta|\leq 2^n}\mathcal{F}^{-1}(e^{-|\xi|^2 (t-s)})(x-y)i^{d+1}\frac{\xi}{|\xi|^{H_0+\frac{1}{2}}}\prod_{i=1}^{d}\frac{\eta_i}{|\eta_i|^{H_i+\frac{1}{2}}}e^{i\xi s}e^{i\langle \eta,y \rangle}\widehat{W}(d\xi,d\eta)\\
&= c_H i^{d+1}\int_{|\xi|\leq 2^{2n}}^{}\int_{|\eta|\leq 2^n}\frac{\xi}{|\xi|^{H_0+\frac{1}{2}}}\prod_{i=1}^{d}\frac{\eta_i}{|\eta_i|^{H_i+\frac{1}{2}}}e^{i\langle \eta,x \rangle}\cdot\\
&\hspace{2,7cm}\left[\int_{0}^{t}ds \, e^{i\xi s}\left(\int_{\mathbb{R}^d}dy\, \mathcal{F}^{-1}(e^{-|\xi|^2 (t-s)})(x-y)e^{-i\langle \eta,x-y \rangle}\right)\right]\widehat{W}(d\xi,d\eta)\\
&= c_H i^{d+1}\int_{|\xi|\leq 2^{2n}}^{}\int_{|\eta|\leq 2^n}\frac{\xi}{|\xi|^{H_0+\frac{1}{2}}}\prod_{i=1}^{d}\frac{\eta_i}{|\eta_i|^{H_i+\frac{1}{2}}}e^{i\langle \eta,x \rangle}\cdot\left[\int_{0}^{t}ds \, e^{i\xi(t-s)}\left(\int_{\mathbb{R}^d}dy\, \mathcal{F}^{-1}(e^{-|\xi|^2 s})(y)e^{-i\langle \eta,y \rangle}\right)\right]\widehat{W}(d\xi,d\eta).
\end{align*}
We have finally obtained that
$$\<Psi>_n(t,x)=c_H i^{d+1}\int_{|\xi|\leq 2^{2n}}^{}\int_{|\eta|\leq 2^n}\frac{\xi}{|\xi|^{H_0+\frac{1}{2}}}\prod_{i=1}^{d}\frac{\eta_i}{|\eta_i|^{H_i+\frac{1}{2}}}e^{i\langle \eta,x \rangle}\gamma_t(\xi,|\eta|)\, \widehat{W}(d\xi,d\eta),$$
where for every $t\geq 0$, $\xi \in \mathbb{R}$ and $r>0$, we introduce the quantity $\gamma_t(\xi,r)$ as 
\begin{equation}
\gamma_t(\xi,r):=e^{i\xi t}\int_{0}^{t}e^{-s r^2}e^{-i \xi s }ds \, .
\end{equation}
\normalsize
Moreover, let us observe that $\<Psi>$ is real-valued. Indeed, let $f_{t,x}$ the function defined for all $\xi \in \mathbb{R}, \eta \in \mathbb{R}^d$ by
$$f_{t,x}(\xi,\eta):=c_H i^{d+1}\mathbbm{1}_{|\xi|\leq 2^{2n}}^{}\mathbbm{1}_{|\eta|\leq 2^n}\frac{\xi}{|\xi|^{H_0+\frac{1}{2}}}\prod_{i=1}^{d}\frac{\eta_i}{|\eta_i|^{H_i+\frac{1}{2}}}e^{i\langle \eta,x \rangle}\gamma_t(\xi,|\eta|),$$
in such a way that $\displaystyle\<Psi>_n(t,x)=\int_{\mathbb{R}^{d+1}}f_{t,x}(\xi,\eta)\widehat{W}(d\xi,d\eta).$ As $\overline{f_{t,x}(\xi,\eta)}=f_{t,x}(-\xi,-\eta),$ the Fourier transform of $f$ is real-valued that implies
\begin{eqnarray*}
\overline{\<Psi>_n(t,x)}&=&\overline{\int_{\mathbb{R}^{d+1}}f_{t,x}(\xi,\eta)\widehat{W}(d\xi,d\eta)}\\
&=&\overline{\int_{\mathbb{R}^{d+1}}\widehat{f_{t,x}}(\xi,\eta)W(d\xi,d\eta)}\\
&=&\int_{\mathbb{R}^{d+1}}\overline{\widehat{f_{t,x}}(\xi,\eta)}W(d\xi,d\eta)\\
&=&\int_{\mathbb{R}^{d+1}}\widehat{f_{t,x}}(\xi,\eta)W(d\xi,d\eta)\\
&=&\int_{\mathbb{R}^{d+1}}f_{t,x}(\xi,\eta)\widehat{W}(d\xi,d\eta)\\
&=&\<Psi>_n(t,x).
\end{eqnarray*}
We are now in a position to define properly $\<Psi>_n$.
\begin{definition}\label{defi:luxo-n}
We call a solution of equation \eqref{regu} (or \emph{linear solution associated with \eqref{dim-d}}) any centered real Gaussian process 
$$\big\{\<Psi>_n(s,x), \, n\geq 1, \, s\geq 0, \, x\in \R^d \big\}$$
whose covariance function is given by the relation: for all $n,m\geq 1$, $s, t\geq 0$ and $x, y \in \mathbb{R}^d,$
\begin{equation}\label{cova-luxo}
\mathbb{E}\Big[\<Psi>_n(s,x)\<Psi>_m(t,y)\Big]=c_{H}^2\int_{(\xi,\eta)\in D_n \cap D_m}\frac{1}{|\xi|^{2H_0-1}}\prod\limits_{i=1}^{d}\frac{1}{|\eta_i|^{2H_i-1}}\gamma_{s}(\xi,|\eta|)\overline{\gamma_{t}(\xi,|\eta|)}e^{\imath \langle \eta, x-y \rangle}\, d\xi d\eta \, ,
\end{equation}
where $D_n:=B_{2n}^1 \times B_n^d$ with $B_\ell^k:=\left\{ \lambda \in \mathbb{R}^k: |\lambda| \leq 2^\ell \right\}$.
\end{definition}

\subsection{Some preliminary estimates on $\ga$}

\

\smallskip
Before proving that $\chi \<Psi>_n$ is a Cauchy sequence in a convenient subspace, we need to establish some bounds on the quantity $\gamma_t(\xi,r)$ which contains all the information with respect to the heat semigroup:

\begin{equation}
\gamma_t(\xi,r):=e^{i\xi t}\int_{0}^{t}e^{-s r^2}e^{-i \xi s }ds \, .
\end{equation}
We first state some estimates on the variation
\begin{equation}
\gamma_{s,t}(\xi,r):=\gamma_t(\xi,r)-\gamma_s(\xi,r).
\end{equation}

\begin{lemma}\label{lem}
For all $0\leq s \leq t\leq 1$, $\xi \in \mathbb{R}$, $r>0$ and $\varepsilon \in [0,1]$, the following bound holds true:
$$
|\gamma_{s,t}(\xi,r)|\lesssim \min\bigg( |\xi|^{\varepsilon}|t-s|^{\varepsilon} + |t-s|,\frac{|t-s| r^2 }{|\xi|}+  \frac{|t-s|^{\varepsilon}\{1+r^2\}}{|\xi|^{1-\varepsilon}},\frac{|t-s|^{\varepsilon}\{r^{2\varepsilon}+|\xi|^{\varepsilon}\}}{\sqrt{r^4+\xi^2}}\bigg)\, .
$$
\end{lemma}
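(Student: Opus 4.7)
The plan is to first compute $\gamma_t(\xi,r)$ explicitly and then obtain each of the three bounds by interpolating elementary Lipschitz estimates for the building block functions $t\mapsto e^{i\xi t}$ and $t\mapsto e^{-tr^2}$.

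\smallskip

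First I would explicitly evaluate the integral
$$\int_0^t e^{-u(r^2+i\xi)}\, du= \frac{1-e^{-t(r^2+i\xi)}}{r^2+i\xi}$$
to rewrite
$$\gamma_t(\xi,r)=\frac{e^{i\xi t}-e^{-tr^2}}{r^2+i\xi}, \qquad \gamma_{s,t}(\xi,r)=\frac{\left(e^{i\xi t}-e^{i\xi s}\right)-\left(e^{-tr^2}-e^{-sr^2}\right)}{r^2+i\xi}.$$
Next, I would record the following pointwise inequalities, obtained by interpolating the trivial bound with the derivative bound: for every $\varepsilon\in [0,1]$,
$$|e^{i\xi t}-e^{i\xi s}|\leq \min(2,|\xi||t-s|)\lesssim |\xi|^{\varepsilon}|t-s|^{\varepsilon},\qquad |e^{-tr^2}-e^{-sr^2}|\leq \min(1,r^2|t-s|)\lesssim r^{2\varepsilon}|t-s|^{\varepsilon},$$
together with the elementary bounds $|r^2+i\xi|\geq |\xi|$ and $|r^2+i\xi|=\sqrt{r^4+\xi^2}$.

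\smallskip

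The third estimate is then immediate by injecting the interpolation inequalities into $|\gamma_{s,t}|\leq (|e^{i\xi t}-e^{i\xi s}|+|e^{-tr^2}-e^{-sr^2}|)/\sqrt{r^4+\xi^2}$. For the second estimate, I would use $|r^2+i\xi|\geq |\xi|$ and bound the two pieces of the numerator separately: the first one via the interpolation bound $|e^{i\xi t}-e^{i\xi s}|\lesssim |\xi|^{\varepsilon}|t-s|^{\varepsilon}$, producing the contribution $|t-s|^{\varepsilon}/|\xi|^{1-\varepsilon}$, and the second one via $|e^{-tr^2}-e^{-sr^2}|\leq r^2|t-s|$, producing the contribution $r^2|t-s|/|\xi|$; the former can be estimated from above by $(1+r^2)|t-s|^{\varepsilon}/|\xi|^{1-\varepsilon}$, as needed.

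\smallskip

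Finally, for the first estimate, which has no denominator, I would go back to the original integral definition and decompose
$$\gamma_{s,t}(\xi,r)=\left(e^{i\xi t}-e^{i\xi s}\right)\int_0^s e^{-ur^2}e^{-i\xi u}\,du+e^{i\xi t}\int_s^t e^{-ur^2}e^{-i\xi u}\,du,$$
which, using the trivial bound $|e^{-ur^2}e^{-i\xi u}|\leq 1$, the interpolation estimate for the oscillatory difference, and the constraint $s\leq t\leq 1$, immediately yields $|\gamma_{s,t}|\lesssim |\xi|^{\varepsilon}|t-s|^{\varepsilon}+|t-s|$. Collecting the three bounds in a minimum gives the claim. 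There is no real obstacle: the only minor care point is that the second and third estimates must share the same interpolation exponent $\varepsilon$ used for both $e^{i\xi\cdot}$ and $e^{-\cdot r^2}$, so that the resulting powers of $|t-s|$ match in the final minimum.
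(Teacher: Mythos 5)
Your proposal is correct, and for the first and third estimates it follows essentially the same path as the paper: the same decomposition $\gamma_{s,t}=(e^{i\xi t}-e^{i\xi s})\int_0^s+e^{i\xi t}\int_s^t$ for the first bound, and the closed form $\gamma_t(\xi,r)=\frac{e^{i\xi t}-e^{-tr^2}}{r^2+i\xi}$ combined with the interpolated Lipschitz bounds $|e^{i\xi t}-e^{i\xi s}|\lesssim |\xi|^{\varepsilon}|t-s|^{\varepsilon}$ and $|e^{-tr^2}-e^{-sr^2}|\lesssim r^{2\varepsilon}|t-s|^{\varepsilon}$ for the third. The only place you diverge is the second estimate: the paper obtains it through an integration-by-parts identity writing $\gamma_t(\xi,r)=-\frac{e^{-r^2t}-e^{i\xi t}}{i\xi}+\frac{ie^{i\xi t}r^2}{\xi}\int_0^t e^{-sr^2}e^{-i\xi s}ds$ and then estimating four separate terms, whereas you simply use the closed form together with $|r^2+i\xi|\geq |\xi|$ and bound the two numerator differences separately. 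Your route is shorter, avoids the extra bookkeeping, and in fact yields the marginally stronger bound $\frac{|t-s|^{\varepsilon}}{|\xi|^{1-\varepsilon}}+\frac{r^2|t-s|}{|\xi|}$, which trivially implies the stated one; nothing is lost, since the factor $\{1+r^2\}$ in the statement is only there because the paper's derivation produces it.
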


\begin{proof} First of all, we write
$$\gamma_{s,t}(\xi,r)=\{e^{i\xi t}-e^{i\xi s}\}\int_{0}^{s}e^{-u r^2}e^{-i \xi u }du + e^{i \xi t}\int_{s}^{t}e^{-u r^2}e^{-i \xi u }du \, ,$$ 
and so
$$
|\gamma_{s,t}(\xi,r)|\lesssim |e^{i\xi t}-e^{i\xi s}|\bigg|\int_{0}^{s}e^{-u r^2}e^{-i \xi u }du\bigg| +\bigg|\int_{s}^{t}e^{-u r^2}e^{-i \xi u }du\bigg|\lesssim |\xi|^{\varepsilon}|t-s|^{\varepsilon} + |t-s|\, .
$$
Then observe that 
$$\gamma_{t}(\xi,r)=e^{i\xi t}\int_{0}^{t}e^{-s r^2}e^{-i \xi s }ds=-\frac{e^{- r^2 t}-e^{i \xi t}}{i\xi}+\frac{i e^{i \xi t}r^2}{\xi}\int_{0}^{t}e^{-s r^2}e^{-i \xi s }ds \, , $$
which readily entails 
\begin{align*}
&\gamma_{s,t}(\xi,r)\\
&=-\frac{\{e^{- r^2 t}-e^{- r^2 s}\}-\{e^{i \xi t}-e^{i \xi s}\}}{i\xi}+\frac{i r^2}{\xi}\{e^{i\xi t}-e^{i\xi s}\}\int_{0}^{s}e^{-u r^2}e^{-i \xi u }du+\frac{i e^{i \xi t}r^2}{\xi}\int_{s}^{t}e^{-u r^2}e^{-i \xi u }du \, .
\end{align*}
Thus,
\begin{eqnarray*}
|\gamma_{s,t}(\xi,r)|&\lesssim& r^2 \frac{|t-s|}{|\xi|}+\frac{|t-s|^{\varepsilon}}{|\xi|^{1-\varepsilon}}+r^2\frac{|t-s|^{\varepsilon}}{|\xi|^{1-\varepsilon}}+\frac{r^2}{|\xi|}|t-s|\\
&\lesssim& r^2 \frac{|t-s|}{|\xi|}+ \{1+r^2\} \frac{|t-s|^{\varepsilon}}{|\xi|^{1-\varepsilon}}\, .
\end{eqnarray*}
To end with, it can be verified that 
$$\gamma_{t}(\xi,r)=\frac{e^{i \xi t}-e^{- r^2 t}}{r^2+i \xi} ,$$
which yields
\begin{eqnarray*}
|\gamma_{s,t}(\xi,r)|&=& \frac{1}{\sqrt{r^4+\xi^2}}\Big|\{e^{- r^2 t}-e^{- r^2 s}\}-\{e^{i \xi t}-e^{i \xi s}\}\Big|\\
&\lesssim&\frac{|t-s|^{\varepsilon}}{\sqrt{r^4+\xi^2}}\{r^{2\varepsilon}+|\xi|^{\varepsilon}\}\  \, .
\end{eqnarray*}
\end{proof}

\begin{corollary}\label{tec}
For any $0\leq s \leq t \leq 1$, $H\in (0,1)$, $r>0$ and $\varepsilon\in [0, H)$, it holds that
$$ \int_{\mathbb{R}}\frac{|\gamma_{s,t}(\xi,r)|^2}{|\xi|^{2H-1}}\, d\xi \lesssim \frac{|t-s|^{2\varepsilon}}{1+r^{4(H-\varepsilon)}}\, .$$
\end{corollary}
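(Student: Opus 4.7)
The plan is to split the integration domain according to two independent dichotomies depending on the size of $r$ (small vs.\ large) and, within each case, on the size of $|\xi|$ relative to a threshold suited to the bound one intends to apply; the three estimates of Lemma \ref{lem} will then each be used in the region where they are sharpest.

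For $r\geq 1$, I would use the third bound $|\gamma_{s,t}(\xi,r)|^2\lesssim |t-s|^{2\varepsilon}(r^{4\varepsilon}+|\xi|^{2\varepsilon})/(r^4+\xi^2)$ throughout, splitting the $\xi$-integral at $|\xi|=r^2$. On $\{|\xi|\leq r^2\}$ the denominator is of order $r^4$ and the numerator of order $r^{4\varepsilon}$, so one is reduced to the elementary integral $\int_{|\xi|\leq r^2}|\xi|^{1-2H}\,d\xi\asymp r^{4-4H}$ (finite because $H<1$), which yields $|t-s|^{2\varepsilon}r^{-4(H-\varepsilon)}$. On $\{|\xi|\geq r^2\}$ the denominator is of order $\xi^2$ and the numerator is dominated by $|\xi|^{2\varepsilon}$, leaving $\int_{|\xi|\geq r^2}|\xi|^{-1-2(H-\varepsilon)}\,d\xi\asymp r^{-4(H-\varepsilon)}$, which is finite thanks to $\varepsilon<H$. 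Since $r\geq 1$, both contributions agree with $|t-s|^{2\varepsilon}/(1+r^{4(H-\varepsilon)})$ up to a constant.

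For $r\leq 1$ this strategy breaks down, since the third bound is too crude near $\xi=0$; instead I would split at $|\xi|=1$. On $\{|\xi|\leq 1\}$, the first estimate of Lemma \ref{lem} gives $|\gamma_{s,t}(\xi,r)|^2\lesssim |t-s|^{2\varepsilon}(|\xi|^{2\varepsilon}+1)$ after absorbing the bounded factor $|t-s|^{2-2\varepsilon}\leq 1$; both $\int_{|\xi|\leq 1}|\xi|^{1-2H+2\varepsilon}\,d\xi$ and $\int_{|\xi|\leq 1}|\xi|^{1-2H}\,d\xi$ are finite because $H<1$. On $\{|\xi|\geq 1\}$, the second estimate combined with $r\leq 1$ yields $|\gamma_{s,t}(\xi,r)|^2\lesssim |t-s|^2/\xi^2+|t-s|^{2\varepsilon}/|\xi|^{2-2\varepsilon}$; the two corresponding tail integrals converge, thanks respectively to $H>0$ and $\varepsilon<H$. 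Gathering these pieces and using $|t-s|\leq 1$ produces the bound $|t-s|^{2\varepsilon}$, which matches the target because $1+r^{4(H-\varepsilon)}\asymp 1$ in this regime.

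The argument is largely bookkeeping; the only delicate point is choosing, in each of the four sub-regions, the estimate of Lemma \ref{lem} whose decay is fast enough to counterbalance the weight $|\xi|^{1-2H}$ without creating a new singularity. Each of the constraints $H\in(0,1)$ and $\varepsilon\in[0,H)$ appears in exactly one such convergence check, which indicates that the statement is essentially sharp at the level of Lemma \ref{lem}.
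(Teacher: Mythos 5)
Your proof is correct and follows essentially the same route as the paper: the first estimate of Lemma \ref{lem} on $\{|\xi|\leq 1\}$ and the second on $\{|\xi|\geq 1\}$ when $r\leq 1$, and the third estimate when $r\geq 1$. The only (cosmetic) difference is that for $r\geq 1$ you split the integral at $|\xi|=r^2$ and bound each piece by hand, whereas the paper extracts the factor $r^{-4(H-\varepsilon)}$ in one stroke via the rescaling $\xi\mapsto r^2\xi$; the two computations are equivalent.
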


\begin{proof}
The desired inequality comes of course from a relevant use of the estimates shown in Lemma \ref{lem}.

\smallskip

For $0<r<1$, we have
$$\int_{\mathbb{R}}\frac{|\gamma_{s,t}(\xi,r)|^2}{|\xi|^{2H-1}}\, d\xi\lesssim |t-s|^{2\varepsilon}\bigg[\int_{|\xi|\leq 1}\frac{d\xi}{|\xi|^{2H-1}}+\int_{|\xi|\geq 1}\frac{d\xi}{|\xi|^{2(H-\varepsilon)+1}}\bigg]\lesssim |t-s|^{2\varepsilon}\, .$$
Then, for $r>1$, it holds that
\begin{eqnarray*}
\int_{\mathbb{R}}\frac{|\gamma_{s,t}(\xi,r)|^2}{|\xi|^{2H-1}}\, d\xi &\lesssim& |t-s|^{2\varepsilon}\int_{\mathbb{R}} \frac{r^{4\varepsilon}+|\xi|^{2\varepsilon}}{(r^4+\xi^2)|\xi|^{2H-1}}\, d\xi \\
&\lesssim& \frac{|t-s|^{2\varepsilon}}{r^{4(H-\varepsilon)}}\int_{\mathbb{R}} \frac{1+|\xi|^{2\varepsilon}}{(1+\xi^2)|\xi|^{2H-1}}\, d\xi \ \lesssim \ \frac{|t-s|^{2\varepsilon}}{r^{4(H-\varepsilon)}}\, .
\end{eqnarray*}
\end{proof}

\subsection{Construction of the first order stochastic process}\label{subsec:proof-sto}
\begin{proof}[Proof of Proposition~\ref{sto}]
Without loss of generality, we will suppose during the whole proof that $T= 1$ and we set, for all $m, n\geq 0$, $\<Psi>_{n,m}:=\<Psi>_m-\<Psi>_n$. Also, along the statement of the proposition, we fix $\al$ verifying~\eqref{assump-gene-al}.

\

\noindent
\textbf{Step 1:}
The first objective is to show that for any $p\geq 1$, $1\leq n \leq m$, $0\leq s \leq t \leq 1$ and $\varepsilon>0$ small enough, it holds
\begin{equation}\label{bou-psi-step-1}
 \int_{\mathbb{R}^d}\mathbb{E}\bigg[\Big|\mathcal{F}^{-1}\Big(\{1+|.|^2\}^{-\frac{\alpha}{2}}\mathcal{F}\big(\chi \big[\<Psi>_{n,m}(t,.)-\<Psi>_{n,m}(s,.)\big]\big)\Big)(x)\Big|^{2p}\bigg]\, dx \lesssim 2^{-4n \varepsilon p}|t-s|^{2 \varepsilon p}\, ,
\end{equation}
where the proportional constant only depends on $p, \alpha$ and $\chi$.

\

Resorting to the same kind of arguments as those used in \cite[Proof of Prop 1.2]{deya3} , we obtain the estimate

\begin{align*}
& \int_{\mathbb{R}^d}dx\, \mathbb{E}\bigg[\Big|\mathcal{F}^{-1}\Big(\{1+|.|^2\}^{-\frac{\alpha}{2}}\mathcal{F}\big(\chi \big[\<Psi>_{n,m}(t,.)-\<Psi>_{n,m}(s,.)\big]\big)\Big)(x)\Big|^{2p}\bigg]\\
&\lesssim  \left(\int_{(\xi,\eta)\in D_{n,m}}\frac{1}{|\xi|^{2H_0-1}}\prod_{i=1}^{d}\frac{1}{|\eta_i|^{2H_i-1}}\{1+|\eta|^2\}^{-\alpha}|\gamma_{s,t}(\xi,|\eta|)|^2\, d\xi d\eta\right)^p \, .
\end{align*}

\smallskip

Now we can split the latter integral into
\begin{align}
& \left(\int_{(\xi,\eta)\in D_{n,m}}\frac{1}{|\xi|^{2H_0-1}}\prod_{i=1}^{d}\frac{1}{|\eta_i|^{2H_i-1}}\{1+|\eta|^2\}^{-\alpha}|\gamma_{s,t}(\xi,|\eta|)|^2\, d\xi d\eta\right)^p \nonumber\\
&\lesssim  \left(\int_{2^{2n}\leq |\xi|\leq 2^{2m}}\int_{|\eta|\leq 2^m}\frac{1}{|\xi|^{2H_0-1}}\prod_{i=1}^{d}\frac{1}{|\eta_i|^{2H_i-1}}\{1+|\eta|^2\}^{-\alpha}|\gamma_{s,t}(\xi,|\eta|)|^2 \, d\xi d\eta\right)^p \nonumber\\
&\hspace{1cm}+ \left(\int_{|\xi|\leq 2^{2m}}\int_{2^n\leq |\eta|\leq 2^m}\frac{1}{|\xi|^{2H_0-1}}\prod_{i=1}^{d}\frac{1}{|\eta_i|^{2H_i-1}}\{1+|\eta|^2\}^{-\alpha}|\gamma_{s,t}(\xi,|\eta|)|^2\, d\xi d\eta\right)^p \nonumber\\
&=:(\mathbb{I}_{n,m}(s,t))^p+(\mathbb{II}_{n,m}(s,t))^p \, . \label{decompos-i-ii}
\end{align}
Let us focus on the estimation of $\mathbb{I}_{n,m}(s,t)$. To this end, we fix $\varepsilon>0$, so that
\begin{equation}\label{element-bounding}
\mathbb{I}_{n,m}(s,t)\leq 2^{-4n\varepsilon}  \int_{\mathbb{R}}d\xi\int_{\R^d}d\eta \, \frac{1}{|\xi|^{2H_0-2\varepsilon-1}}\prod_{i=1}^{d}\frac{1}{|\eta_i|^{2H_i-1}}\{1+|\eta|^2\}^{-\alpha}|\gamma_{s,t}(\xi,|\eta|)|^2\, ,
\end{equation}
which, after a classical hyperspherical change of variables, leads us to
\begin{equation}\label{chg-of-var}
\mathbb{I}_{n,m}(s,t)\lesssim 2^{-4n\varepsilon} \int_{0}^{\infty}dr \, \frac{\left\{1+r^2\right\}^{-\alpha}}{r^{2(H_1+\cdots+H_d)-2d+1}}\left(\int_{\mathbb{R}}d\xi\, \frac{|\gamma_{s,t}(\xi,r)|^2}{|\xi|^{2H_0-2\varepsilon-1}}\right) \, .
\end{equation}
We can now apply Corollary~\ref{tec} with $H:=H_0-\varepsilon$, which gives, for all $0<\varepsilon< \frac{H_0}{2}$,
\begin{align}
&\mathbb{I}_{n,m}(s,t)\nonumber\\
&\lesssim 2^{-4n\varepsilon} |t-s|^{2\varepsilon}\int_{0}^{\infty}dr \, \frac{\left\{1+r^2\right\}^{-\alpha}}{r^{2(H_1+\cdots+H_d)-2d+1}}\frac{1}{1+r^{4H_0-8\varepsilon}}\nonumber\\
&\lesssim 2^{-4n\varepsilon}|t-s|^{2\varepsilon}\left(\int_0^1 \frac{1}{r^{2(H_1+\cdots+H_d)-2d+1}}dr + \int_{1}^{\infty}\frac{1}{r^{2\alpha+2(2H_0+H_1+\cdots+H_d)-2d+1-8\varepsilon}}dr \right)\, .\label{boun-i-pr}
\end{align}
Due to our assumption~\eqref{assump-gene-al}, we can in fact pick $\varepsilon$ small enough so that
$$4\varepsilon< \alpha-\bigg[d-\bigg(2H_0+\sum_{i=1}^{d}H_i\bigg)\bigg] \, ,$$
and for such a parameter, the two integrals in~\eqref{boun-i-pr} are finite, yielding
$$\mathbb{I}_{n,m}(s,t)\lesssim 2^{-4n\varepsilon}|t-s|^{2\varepsilon}\, .$$
It is not difficult to see that the previous estimates could also be used to control $\mathbb{II}_{n,m}(s,t)$, yielding the very same estimate
$$\mathbb{II}_{n,m}(s,t)\lesssim 2^{-4n\varepsilon}|t-s|^{2\varepsilon}\, .$$
Coming back to~\eqref{decompos-i-ii}, we get the desired bound~\eqref{bou-psi-step-1}.

\

\noindent
\textbf{Step 2:}
The estimate we proved in the previous step can be reformulated in the following way
\begin{equation}\label{recap-step-1}
\mathbb{E}\Big[\big\|\chi \<Psi>_{n,m}(t,.)-\chi\<Psi>_{n,m}(s,.)\big\|_{\mathcal{W}^{-\alpha,2p}}^{2p}\Big]\lesssim 2^{-4n \varepsilon p}|t-s|^{2 \varepsilon p}\, ,
\end{equation}
for all $p\geq 1$, $1\leq n \leq m$, $0\leq s \leq t \leq 1$ and $\varepsilon>0$ small enough.

\smallskip

Combining Kolmogorov continuity criterion with the classical Garsia-Rodemich-Rumsey estimate (see~\cite{GRR}), we easily show that for any $p\geq 1$ large enough,
\begin{equation}\label{bou-in-l-p}
\|\chi\<Psi>_{n,m}\|_{ L^{2p}(\Omega; \mathcal{C}_T\mathcal{W}^{-\alpha,2p})} \lesssim 2^{-2n \varepsilon }\, .
\end{equation}
In particular, $(\chi\<Psi>_{n})_{n\geq 1}$ is a Cauchy sequence in $L^{2p}(\Omega; \mathcal{C}([0,T]; \mathcal{W}^{-\alpha,2p}(\mathbb{R}^d)))$ (for any $p\geq 1$ large enough). As $L^{2p}(\Omega; \mathcal{C}([0,T]; \mathcal{W}^{-\alpha,2p}(\mathbb{R}^d)))$ is a Banach space, we deduce the convergence of $(\chi\<Psi>_{n})_{n\geq 1}$ in this space to a limit $\chi\<Psi>$. Coming back to~\eqref{bou-in-l-p}, we also have 
$$
\|\chi\<Psi>-\chi\<Psi>_{n}\|_{ L^{2p}(\Omega; \mathcal{C}_T\mathcal{W}^{-\alpha,2p})}  \lesssim 2^{-2n \varepsilon }\, ,
$$
and from there, a classical use of the Borell-Cantelli lemma justifies the desired almost sure convergence of $(\chi\<Psi>_{n})_{n\geq 1}$ to $\chi\<Psi>$ in $\mathcal{C}([0,T]; \mathcal{W}^{-\alpha,p}(\mathbb{R}^d))$, for every $2\leq p<\infty$. To end with, the convergence in $\mathcal{C}([0,T]; \mathcal{W}^{-\alpha,\infty}(\mathbb{R}^d))$ is the result of the Sobolev embedding $\cw^{-\al+\frac{d}{p}+\eta,p}(\R^d) \subset \cw^{-\al,\infty}(\R^d)$, for any $\eta>0$.

\

\end{proof}

\subsection{Construction of the second order stochastic process}
\begin{proof}[Proof of Proposition~\ref{sto1}]
We will follow the same general strategy as in the proof of Proposition \ref{sto}. Let us again suppose that $T= 1$, and set, for every $m, n\geq 0$, $\<Psi2>_{n,m}:=\<Psi2>_m-\<Psi2>_n$.

\

\noindent
\textbf{Step 1:}
Our first objective here is to prove that for every $p\geq 1$, $0\leq n \leq m$, $0\leq s \leq t \leq 1$, $\varepsilon>0$ small enough, and for every $\al$ verifying
\begin{equation}\label{assump-al-bis}
d-\bigg(2H_0+\sum_{i=1}^{d}H_i\bigg)<\al <\frac14 \, ,
\end{equation}
it holds
\begin{equation}\label{mom-estim-wick-sq}
 \int_{\mathbb{R}^d}\mathbb{E}\bigg[\Big|\mathcal{F}^{-1}\Big(\{1+|.|^2\}^{-\alpha}\mathcal{F}\big(\chi^2 \big[\<Psi2>_{n,m}(t,.)-\<Psi2>_{n,m}(s,.)\big]\big)\Big)(x)\Big|^{2p}\bigg]dx \lesssim 2^{-4n \varepsilon p}|t-s|^{\varepsilon p}\, ,
\end{equation}
where the proportional constant only depends on $p, \alpha,$ and $\chi$.

\smallskip

In order to reduce the length of the proof, we will only show estimate \eqref{mom-estim-wick-sq} for $n=0$, that is we will focus on the bound for the time-variation $\<Psi2>_{m}(t,.)-\<Psi2>_{m}(s,.)$, with $m\geq 1$. The extended result to all $m\geq n\geq 0$ could in fact be easily deduced from the estimates below combined with the bounding argument used (for example) in \eqref{element-bounding}.

\

Resorting to the same kind of arguments as those used in \cite[Proof of Prop 1.6]{deya-wave}, we obtain the estimate
\begin{equation}\label{bou-sum-cj-i-p}
\int_{\R^d} dx \, \mathbb{E}\bigg[\Big|\mathcal{F}^{-1}\Big(\{1+|.|^2\}^{-\al}\mathcal{F}\big(\chi^2 \big[\<Psi2>_{m}(t,.)-\<Psi2>_{m}(s,.)\big]\big)\Big)(x)\Big|^{2}\bigg]^p\lesssim \bigg( \sum_{\ell=1}^4 \cj^\ell_{m;s,t}\bigg)^p
\end{equation}
where
\begin{align}
&\cj^\ell_{m;s,t}:=\int_{(\xi,\eta)\in D_{m}}d\xi d\eta \int_{(\xiti,\etati)\in D_{m}}d\tilde{\xi} d\tilde{\eta} \, \frac{1}{|\xi|^{2H_0-1}}\prod_{i=1}^{d}\frac{1}{|\eta_i|^{2H_i-1}}\frac{1}{|\tilde{\xi}|^{2H_0-1}}\prod_{i=1}^{d}\frac{1}{|\tilde{\eta_i}|^{2H_i-1}}\nonumber\\
&\hspace{7cm} \big|\gga^\ell_{m;s,t}(\xi,|\eta|;\xiti,|\etati|)\big|\big\{1+|\eta-\tilde{\eta}|^2\big\}^{-2\alpha}, \,  \label{defi-cj-i}
\end{align}
with $\gga^\ell_{m;s,t}=\gga^\ell_{m;s,t}(\xi,|\eta|;\xiti,|\etati|)$ given by
\small
\begin{align*}
\gga^1_{m;s,t}:=\gamma_{t}(\xi,|\eta|)\, \overline{\gamma_{s,t}(\xi,|\eta|)}\, |\gamma_t(\xiti,|\etati|)|^2\, ,& \quad \gga^2_{m;s,t}:=\gamma_t(\xi,|\eta|)\, \overline{\gamma_{s,t}(\xi,|\eta|)}\, \overline{\gamma_{s}(\xiti,|\etati|)}\, \gamma_t(\xiti,|\etati|)\, ,\\
\gga^3_{m;s,t}:=\gamma_{s}(\xi,|\eta|)\, \overline{\gamma_{s,t}(\xi,|\eta|)}\, \overline{\gamma_t(\xiti,|\etati|)} \, \gamma_s(\xiti,|\etati|)\, , &\quad \gga^4_{m;s,t}:=\overline{\gamma_{t,s}(\xi,|\eta|)}\, \gamma_s(\xi,|\eta|)|\gamma_s(\xiti,|\etati|)|^2\, \,   \, .
\end{align*}
\normalsize

\smallskip

Let us focus on the treatment of $\cj^1_{m;s,t}$.
\begin{align}
\cj^1_{m;s,t}&\lesssim  \int_{(\xi,\eta)\in D_{m}}d\xi d\eta \int_{(\xiti,\etati)\in D_{m}}d\tilde{\xi} d\tilde{\eta} \, \big\{1+|\eta-\tilde{\eta}|^2\big\}^{-2\alpha}  \nonumber\\
&\hspace{0.5cm}\bigg(\frac{1}{|\xi|^{2H_0-1}}\prod_{i=1}^{d}\frac{1}{|\eta_i|^{2H_i-1}}|\gamma_t(\xi,|\eta|)||\gamma_{s,t}(\xi,|\eta|)|\bigg)\cdot \bigg(\frac{1}{|\tilde{\xi}|^{2H_0-1}}\prod_{i=1}^{d}\frac{1}{|\tilde{\eta_i}|^{2H_i-1}}|\gamma_t(\tilde{\xi},|\tilde{\eta}|)|^2\bigg)  \nonumber\\
&\lesssim  \int_{(\mathbb{R}\times\mathbb{R}^d)^2}d\xi d\eta d\tilde{\xi} d\tilde{\eta} \, \big\{1+||\eta|-|\tilde{\eta}||^2\big\}^{-2\alpha}  \nonumber\\
&\hspace{0.5cm}\bigg(\frac{1}{|\xi|^{2H_0-1}}\prod_{i=1}^{d}\frac{1}{|\eta_i|^{2H_i-1}}|\gamma_t(\xi,|\eta|)||\gamma_{s,t}(\xi,|\eta|)|\bigg)\cdot\bigg(\frac{1}{|\tilde{\xi}|^{2H_0-1}}\prod_{i=1}^{d}\frac{1}{|\tilde{\eta_i}|^{2H_i-1}}|\gamma_t(\tilde{\xi},|\tilde{\eta}|)|^2\bigg) \, , \label{boun-cj-1-m}
\end{align}
where we have used the trivial bound $|\eta-\etati|\geq | |\eta|-|\etati| |$.

\

Now, let us decompose the latter integration domain into $(\R\times \mathbb{R}^d )^2:=D_1\cup D_2$, where
$$D_1:=\Big\{(\xi,\eta,\tilde{\xi},\tilde{\eta}):\  0\leq |\tilde{\eta}|\leq \frac{|\eta|}{2} \ \text{or} \ |\tilde{\eta}|\geq \frac{3|\eta|}{2}\Big\}$$ and
$$D_2:=\Big\{(\xi,\eta,\tilde{\xi},\tilde{\eta}):\  \frac{|\eta|}{2}<|\tilde{\eta}|<\frac{3|\eta|}{2}\Big\}\, .$$
Concerning the integral on $D_1$, the inequality $||\eta|-|\tilde{\eta}||\geq\max(\frac{|\eta|}{2},\frac{|\tilde{\eta}|}{3})$ (valid for all $(\xi,\eta,\tilde{\xi},\tilde{\eta}) \in D_1$) allows us to write
\begin{align*}
\ca_1&:=\int_{D_1}\frac{d\xi d\eta d\tilde{\xi} d\tilde{\eta}}{\{1+||\eta|-|\tilde{\eta}||^2\}^{2\alpha}}\bigg(\frac{1}{|\xi|^{2H_0-1}}\prod_{i=1}^{d}\frac{1}{|\eta_i|^{2H_i-1}}|\gamma_t(\xi,|\eta|)||\gamma_{s,t}(\xi,|\eta|)|\bigg)\cdot\\
&\hspace{7cm}\bigg(\frac{1}{|\tilde{\xi}|^{2H_0-1}}\prod_{i=1}^{d}\frac{1}{|\tilde{\eta_i}|^{2H_i-1}}|\gamma_t(\tilde{\xi},|\tilde{\eta}|)|^2\bigg) \nonumber \\
& \lesssim \Bigg(\int_{\mathbb{R}\times\mathbb{R}^d}\frac{d\xi d\eta }{\{1+|\eta|^2\}^{\alpha}}\frac{1}{|\xi|^{2H_0-1}}\prod_{i=1}^{d}\frac{1}{|\eta_i|^{2H_i-1}}|\gamma_t(\xi,|\eta|)||\gamma_{s,t}(\xi,|\eta|)|  \Bigg)\cdot\nonumber \\
&\hspace{5cm} \Bigg(\int_{\mathbb{R}\times\mathbb{R}^d}\frac{d\tilde{\xi} d\tilde{\eta}}{\{1+|\tilde{\eta}|^2\}^{\alpha}}\frac{1}{|\tilde{\xi}|^{2H_0-1}}\prod_{i=1}^{d}\frac{1}{|\tilde{\eta_i}|^{2H_i-1}}|\gamma_t(\tilde{\xi},|\tilde{\eta}|)|
^2\Bigg)\nonumber \\
&\lesssim \Bigg(\int_{\mathbb{R}\times\mathbb{R}^d}\frac{d\xi d\eta }{\{1+|\eta|^2\}^{\alpha}}\frac{1}{|\xi|^{2H_0-1}}\prod_{i=1}^{d}\frac{1}{|\eta_i|^{2H_i-1}}|\gamma_t(\xi,|\eta|)|^2  \Bigg)^{\frac{1}{2}}\cdot\nonumber \\ 
&\hspace{2cm} \Bigg(\int_{\mathbb{R}\times\mathbb{R}^d}\frac{d\xi d\eta }{\{1+|\eta|^2\}^{\alpha}}\frac{1}{|\xi|^{2H_0-1}}\prod_{i=1}^{d}\frac{1}{|\eta_i|^{2H_i-1}}|\gamma_{s,t}(\xi,|\eta|)|^2  \Bigg)^{\frac{1}{2}}\cdot\nonumber \\ 
&\hspace{4cm}\Bigg(\int_{\mathbb{R}\times\mathbb{R}^d}\frac{d\tilde{\xi} d\tilde{\eta}}{\{1+|\tilde{\eta}|^2\}^{\alpha}}\frac{1}{|\tilde{\xi}|^{2H_0-1}}\prod_{i=1}^{d}\frac{1}{|\tilde{\eta_i}|^{2H_i-1}}|\gamma_t(\tilde{\xi},|\tilde{\eta}|)|^2\Bigg)\, ,
\end{align*}
where Cauchy-Schwarz inequality has been used to get the last estimate. 

\smallskip

Now remark that we are here coping with the same integral as in the proof of Proposition \ref{sto} (see in particular \eqref{element-bounding}) and therefore we can reproduce the arguments in \eqref{chg-of-var}-\eqref{boun-i-pr} to obtain the estimate we are looking for, namely: for all $\varepsilon >0$ small enough,
$$\ca_1\lesssim |t-s|^{\varepsilon} \, .$$

\

Concerning the integral over $D_2$, a hyperspherical change of variable entails
\begin{align*}
&\int_{\frac{|\eta|}{2}<|\tilde{\eta}|<\frac{3|\eta|}{2}}\frac{d\tilde{\eta}}{\{1+||\eta|-|\tilde{\eta}||^2\}^{2\alpha}}|\gamma_t(\tilde{\xi},|\tilde{\eta}|)|^2 \prod\limits_{i=1}^{d}\frac{1}{|\tilde{\eta_i}|^{2H_i-1}} \nonumber \\
&=|\eta|^{-2(H_1+...+H_d)+2d}\int_{\frac{1}{2}<|\tilde{\eta}|<\frac{3}{2}}\frac{d\tilde{\eta}}{\{1+|\eta|^2(1-|\tilde{\eta}|)^2\}^{2\alpha}}|\gamma_t(\tilde{\xi},|\eta||\tilde{\eta}|)|^2 \prod\limits_{i=1}^{d}\frac{1}{|\tilde{\eta_i}|^{2H_i-1}} \nonumber \\
&\lesssim |\eta|^{-2(H_1+...+H_d)+2d}\int_{\frac{1}{2}<r<\frac{3}{2}}\frac{dr}{\{1+|\eta|^2(1-r)^2\}^{2\alpha}}|\gamma_t(\tilde{\xi},|\eta|r)|^2\, .
\end{align*}
As a consequence,
\begin{align*}
\ca_2&:=\int_{D_2}\frac{d\xi d\eta d\tilde{\xi} d\tilde{\eta}}{\{1+||\eta|-|\tilde{\eta}||^2\}^{2\alpha}}\bigg(\frac{1}{|\xi|^{2H_0-1}}\prod_{i=1}^{d}\frac{1}{|\eta_i|^{2H_i-1}}|\gamma_t(\xi,|\eta|)||\gamma_{s,t}(\xi,|\eta|)|\bigg)\cdot\\
&\hspace{7cm}\bigg(\frac{1}{|\tilde{\xi}|^{2H_0-1}}\prod_{i=1}^{d}\frac{1}{|\tilde{\eta_i}|^{2H_i-1}}|\gamma_t(\tilde{\xi},|\tilde{\eta}|)|^2\bigg) \nonumber \\
&\lesssim \int_{\mathbb{R}^d}\frac{d\eta}{|\eta|^{2(H_1+...+H_d)-2d}}\prod_{i=1}^{d}\frac{1}{|\eta_i|^{2H_i-1}}\int_{\frac{1}{2}<r<\frac{3}{2}}\frac{dr}{\{1+|\eta|^2(1-r)^2\}^{2\alpha}}\cdot \nonumber \\
&\hspace{5cm}\bigg(\int_{\mathbb{R}}d\xi\, \frac{|\gamma_t(\xi,|\eta|)||\gamma_{s,t}(\xi,|\eta|)|}{|\xi|^{2H_0-1}}\bigg)\cdot \bigg(\int_{\mathbb{R}}d\tilde{\xi}\, \frac{|\gamma_t(\tilde{\xi},|\eta|r)|^2}{|\tilde{\xi}|^{2H_0-1}}\bigg) \nonumber \\
&\lesssim \int_{\mathbb{R}^d}\frac{d\eta}{|\eta|^{2(H_1+...+H_d)-2d}}\prod\limits_{i=1}^{d}\frac{1}{|\eta_i|^{2H_i-1}}\int_{\frac{1}{2}<r<\frac{3}{2}}\frac{dr}{\{1+|\eta|^2(1-r)^2\}^{2\alpha}} \cdot\nonumber \\
&\hspace{2cm}\bigg(\int_{\mathbb{R}}d\xi\,  \frac{|\gamma_t(\xi,|\eta|)|^2}{|\xi|^{2H_0-1}}\bigg)^{\frac{1}{2}}\cdot \bigg(\int_{\mathbb{R}}d\xi\,  \frac{|\gamma_{s,t}(\xi,|\eta|)|^2}{|\xi|^{2H_0-1}}\bigg)^{\frac{1}{2}}\cdot \bigg(\int_{\mathbb{R}}d\tilde{\xi}\, \frac{|\gamma_t(\tilde{\xi},|\eta|r)|^2}{|\tilde{\xi}|^{2H_0-1}}\bigg)\, . 
\end{align*}
Using again a hyperspherical change of variable (with respect to $\eta$), we obtain
\begin{align*}
\ca_2&\lesssim \int_{0}^{\infty}\frac{d\rho}{\rho^{4(H_1+...+H_d)-4d+1}}\int_{\frac{1}{2}<r<\frac{3}{2}}\frac{dr}{\{1+\rho^2(1-r)^2\}^{2\alpha}} \nonumber \\
&\hspace{2cm}\bigg(\int_{\mathbb{R}}d\xi\, \frac{|\gamma_t(\xi,\rho)|^2}{|\xi|^{2H_0-1}}\bigg)^{\frac{1}{2}}\cdot \bigg(\int_{\mathbb{R}}d\xi\,  \frac{|\gamma_{s,t}(\xi,\rho)|^2}{|\xi|^{2H_0-1}}\bigg)^{\frac{1}{2}}\cdot \bigg(\int_{\mathbb{R}}d\tilde{\xi}\, \frac{|\gamma_t(\tilde{\xi},\rho r)|^2}{|\tilde{\xi}|^{2H_0-1}}\bigg)\, , \nonumber 
\end{align*}
and we can now use Corollary \ref{tec} to assert that 
\begin{align*}
\ca_2&\lesssim |t-s|^{\varepsilon}\Bigg[\int_{0}^{1}\frac{d\rho}{\rho^{4(H_1+...+H_d)-4d+1}}\\
&\hspace{2cm}+\int_{1}^{\infty}\frac{d\rho}{\rho^{4(2H_0+H_1+...+H_d)-4d+1-8\varepsilon}}\int_{\frac{1}{2}<r<\frac{3}{2}}\frac{dr}{\{1+\rho^2(1-r)^2\}^{2\alpha}}\Bigg]
\end{align*}
for all $0<\varepsilon<H_0$. 

\smallskip

Now, remark that
\begin{align}
&\int_{1}^{\infty}\frac{d\rho}{\rho^{4(2H_0+H_1+...+H_d)-4d+1-8\varepsilon}}\int_{\frac{1}{2}<r<\frac{3}{2}}\frac{dr}{\{1+\rho^2(1-r)^2\}^{2\alpha}} \nonumber \\
&\leq \int_{1}^{\infty}\frac{d\rho}{\rho^{4\alpha+4(2H_0+H_1+...+H_d)-4d+1-8\varepsilon}}\int_{\frac{1}{2}<r<\frac{3}{2}}\frac{dr}{(1-r)^{4\alpha}} \, .\label{cond-al-imp}
\end{align}
Thanks to our hypothesis on $\al$ (see \eqref{assump-al-bis}), we know that $4\al <1$ and we can choose $\varepsilon>0$ such that 
$$4\alpha+4(2H_0+H_1+...+H_d)-4d+1-8\varepsilon>1\, . $$
For such a choice of parameter, the two integrals in the right-hand side of \eqref{cond-al-imp} are clearly finite, and finally we have proved that
$$\ca_2\lesssim |t-s|^{\varepsilon} \, .$$

\smallskip

Going back to \eqref{boun-cj-1-m}, we have thus shown that, uniformly over $m$,
$$\cj^1_{m;s,t} \lesssim |t-s|^{\varepsilon} \, .$$ 
It is now easy to realize that the other three integrals $\{\cj^i_{m;s,t},\, i=2,3,4\}$ (as defined in \eqref{defi-cj-i}) could be controlled with the very same arguments (with the very same resulting bound). 

\smallskip

Injecting the above estimates into \eqref{bou-sum-cj-i-p} provides us with the desired conclusion \eqref{mom-estim-wick-sq}.

\

\noindent
\textbf{Step 2: Conclusion.} Let $\al$ satisfying \eqref{assump-gene-al}, that is $\alpha > d-\big(2H_0+\sum_{i=1}^{d}H_i\big)$.  

\smallskip

If $\al < \frac14$, then condition \eqref{assump-al-bis} is satisfied, and so the moment estimate \eqref{mom-estim-wick-sq} holds true. Thanks to this bound, we can reproduce the arguments used in Step 2 of Section \ref{subsec:proof-sto} to get that $(\chi^2\<Psi2>_{n})_{n\geq 1}$ converges almost surely in the space $\mathcal{C}([0,T]; \mathcal{W}^{-2\alpha,p}(\mathbb{R}^d))$, for all $2\leq p <\infty$.

\smallskip

If $\al \geq \frac14$, remark that, due to assumption \eqref{cond-hurst-psi-2}, we can choose $\al'$ verifying $\al'<\al$ and $d-\big(2H_0+\sum_{i=1}^{d}H_i\big) <\al'<\frac14$ (that is, $\al'$ satisfies \eqref{assump-al-bis}). By executing the above scheme again, we deduce that the sequence $(\chi^2\<Psi2>_{n})_{n\geq 1}$ converges almost surely in $\mathcal{C}([0,T]; \mathcal{W}^{-2\alpha',p}(\mathbb{R}^d))$, and therefore it converges almost surely in $\mathcal{C}([0,T]; \mathcal{W}^{-2\alpha,p}(\mathbb{R}^d))$ as well, for all $2\leq p<\infty$.

\smallskip

Finally, the (a.s.) convergence in $\mathcal{C}([0,T]; \mathcal{W}^{-2\alpha,\infty}(\mathbb{R}^d))$ is the result of the Sobolev embedding $\cw^{-2\al+\frac{d}{p}+\eta,p}(\R^d)\subset \cw^{-2\al,\infty}(\R^d)$, for any $\eta>0$, which ends the proof of Proposition \ref{sto1}.
\end{proof}

\subsection{Asymptotic estimate of the renormalization constant}

\

\smallskip

Fix $d\geq 1$ and $(H_0,...,H_d) \in (0,1)^{d+1}$ such that 
$$2H_0+\sum_{i=1}^{d}H_i\leq d\, .$$ 
The objective of this subsection is to give an equivalent of the quantity $\sigma_n(t,x)\ =\ \mathbb{E}[\<Psi>_n(t,x)^2]$.
Let us rewrite it under the integral form:
\begin{eqnarray*}
\sigma_n(t,x)\ =\ \mathbb{E}[\<Psi>_n(t,x)^2]&=&c^2\int_{|\xi|\leq 4^n} \frac{d\xi}{|\xi|^{2H_0-1}}\int_{|\eta|\leq 2^n}\prod\limits_{i=1}^{d}\frac{d\eta_i}{|\eta_i|^{2H_i-1}}|\gamma_t(\xi,|\eta|)|^2 \\
&=&C\int_{0}^{2^n} \frac{dr}{r^{2(H_1+...+H_d)-2d+1}}\int_{|\xi|\leq 4^n}\frac{d\xi}{|\xi|^{2H_0-1}}|\gamma_t(\xi,r)|^2 \, ,
\end{eqnarray*}
where the previous identity is obtained after a hyperspherical change of variables. As remarked in Proposition \ref{prop:renorm-cstt}, the above formula shows the surprising fact that $\sigma_n$ does not depend on $x$.

\smallskip

The desired estimate \eqref{estim-sigma-n} is now a consequence of the following technical result (applied with $\alpha:=2H_0 \in (0,2)$ and $\kappa:=d-[2H_0+\sum_{i=1}^{d}H_i] \geq 0$):

\begin{proposition}\label{equi}
Fix $t>0$. For all $\alpha \in (0,2)$ and $\kappa \geq 0$, there exists two constants $c_1$ and $c_2$ depending only on $\al$ and $\kappa$ such that
$$\int_{0}^{2^n}\frac{dr}{r^{-2\alpha-2\kappa+1}}\int_{|\xi|\leq 4^n}\frac{d\xi}{|\xi|^{\alpha-1}}|\gamma_t(\xi,r)|^2\underset{n \rightarrow \infty}\sim  \left\{
    \begin{array}{ll}
        c_1 4^{n \kappa} & \mbox{if}\hspace{0,3cm} \kappa>0 \,   \\
        c_2 n  & \mbox{if}\hspace{0,3cm} \kappa=0\, 
    \end{array}
\right. .$$
\end{proposition}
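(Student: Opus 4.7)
The plan is to reduce the double integral to a one-variable asymptotic analysis by exploiting the parabolic scaling hidden in $\gamma_t$. Starting from the closed-form expression $\gamma_t(\xi,r)=\frac{e^{i\xi t}-e^{-r^2 t}}{r^2+i\xi}$ derived in the proof of Lemma~\ref{lem}, the substitution $\xi=r^2 v$ in the inner integral produces $\int_{|\xi|\leq 4^n}|\xi|^{1-\alpha}|\gamma_t(\xi,r)|^2\,d\xi=r^{-2\alpha}g_n(r)$ with $g_n(r):=\int_{|v|\leq 4^n/r^2}\frac{|v|^{1-\alpha}\,|e^{ir^2 vt}-e^{-r^2 t}|^2}{1+v^2}\,dv$. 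After this clean-up, the quantity to analyse reduces to $I_n=\int_0^{2^n} r^{2\kappa-1} g_n(r)\,dr$, and the remainder of the proof is about tracking the $n$-dependence of this one-dimensional integral.

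For the case $\kappa>0$, I would perform the outer rescaling $r=2^n\tau$, which produces a factor $4^{n\kappa}$ in front of $\int_0^1 \tau^{2\kappa-1} g_n(2^n\tau)\,d\tau$. For each fixed $\tau>0$ the integrand of $g_n(2^n\tau)$ converges pointwise to $|v|^{1-\alpha}/(1+v^2)$ on $\{|v|\leq 1/\tau^2\}$ (because $e^{-4^n t\tau^2}\to 0$ while $|e^{i\theta}|^2=1$), and it is dominated by $4|v|^{1-\alpha}/(1+v^2)$. The outer weight $\tau^{2\kappa-1}$ is integrable on $(0,1]$ exactly because $\kappa>0$, and $\int_{\mathbb{R}}\frac{|v|^{1-\alpha}}{1+v^2}\,dv<\infty$ exactly because $\alpha\in(0,2)$. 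Dominated convergence then yields $I_n/4^{n\kappa}\to c_1:=\int_0^1\tau^{2\kappa-1}\int_{|v|\leq 1/\tau^2}\frac{|v|^{1-\alpha}}{1+v^2}\,dv\,d\tau\in(0,\infty)$, which is the desired equivalent.

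The main obstacle is the borderline case $\kappa=0$, where the scaling argument degenerates and the expected growth is only logarithmic. I would introduce the unrestricted limit $g(r):=\int_{\mathbb{R}}\frac{|v|^{1-\alpha}|e^{ir^2 vt}-e^{-r^2 t}|^2}{1+v^2}\,dv$ and establish two separate facts about it: first, $g(r)\to C_\alpha:=\int_{\mathbb{R}}\frac{|v|^{1-\alpha}}{1+v^2}\,dv$ as $r\to\infty$ (the cross term decays thanks to $e^{-r^2 t}$ combined with Riemann--Lebesgue applied to $\cos(r^2 vt)$); second, $g(r)=O(r^{2\alpha})$ as $r\to 0$, obtained by Taylor-expanding $|e^{iav}-e^{-a}|^2\sim a^2(1+v^2)$ for $|v|\ll 1/a$ (with $a=r^2 t$) and applying the uniform bound $\leq 4$ for $|v|\gtrsim 1/a$. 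These two pieces of information imply the key identity $\int_0^R r^{-1}g(r)\,dr=C_\alpha\log R + O(1)$ as $R\to\infty$.

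To upgrade this identity from $g$ to $g_n$, I would control the cutoff error $|g(r)-g_n(r)|\lesssim\int_{|v|\geq 4^n/r^2}\frac{|v|^{1-\alpha}}{1+v^2}\,dv\lesssim(r^2/4^n)^\alpha$ and split the outer integration at $r=2^{n-K}$ for a large constant $K$: on $[0,2^{n-K}]$ the error integrates against $r^{-1}$ to $O(4^{-K\alpha})$ (using $\int_0^{2^{n-K}} r^{2\alpha-1}\,dr=O(4^{(n-K)\alpha})$), whereas on $[2^{n-K},2^n]$ the uniform bound $g_n\leq 4C_\alpha$ costs only $O(K)$. Letting $n\to\infty$ first (with $K$ fixed) produces $I_n=C_\alpha\,n\log 2+O_K(1)$, whence $I_n/n\to C_\alpha\log 2=:c_2$, which is the announced linear growth. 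The delicate point here is really the interplay between the truncation in $v$ at the scale $4^n/r^2$ and the logarithmic divergence of $\int r^{-1}g(r)\,dr$: one must be sure that the cutoff contributes only bounded corrections and does not alter the constant in front of $n$.
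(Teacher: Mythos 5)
Your proof is correct, and it reaches both constants of the paper, but it takes a genuinely different route, most visibly in the borderline case $\kappa=0$. Both arguments start from the closed form $|\gamma_t(\xi,r)|^2=\big(1-2\cos(\xi t)e^{-r^2t}+e^{-2r^2t}\big)/(r^4+\xi^2)$, and for $\kappa>0$ both extract the factor $4^{n\kappa}$ by rescaling and conclude by dominated convergence; your parabolic substitution $\xi=r^2v$ followed by $r=2^n\tau$ replaces the paper's sequence of changes of variables ($u=r^2$, rescaling by $4^n$, then $(r,\xi)\mapsto(r/\xi,\xi)$), and one checks via Fubini and $v\mapsto 1/x$ that your $c_1=\int_0^1\tau^{2\kappa-1}\int_{|v|\le\tau^{-2}}\frac{|v|^{1-\alpha}}{1+v^2}\,dv\,d\tau$ coincides with the paper's $\frac1\kappa\big(\int_0^1\frac{x^{\alpha+\kappa-1}}{1+x^2}dx+\int_1^\infty\frac{x^{\alpha-1}}{1+x^2}dx\big)$. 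For $\kappa=0$ the paper introduces the rescaled integral as a function $f(T)$, computes $f'(T)$ in closed form through the elementary antiderivatives of Lemma \ref{teco}, shows $f'(T)\sim c_2/T$ and integrates back using the comparison Lemma \ref{int}; you instead study the one-variable profile $g(r)$ directly (limit $C_\alpha$ at infinity, $O(r^{2\alpha})$ at the origin) and control the truncation error $|g-g_n|\lesssim (r^2/4^n)^\alpha$ — note that its integral against $r^{-1}dr$ over all of $[0,2^n]$ is already uniformly $O(1)$, so the splitting at $2^{n-K}$ is superfluous. The constants agree since $\int_{\mathbb{R}}\frac{|v|^{1-\alpha}}{1+v^2}\,dv=\frac{\pi}{\sin(\pi\alpha/2)}$. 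Your version avoids the explicit integrations of Lemma \ref{teco} at the cost of the (easy) two-sided asymptotics of $g$. One small imprecision: the mere convergence $g(r)\to C_\alpha$ yields only $\int_0^R r^{-1}g(r)\,dr\sim C_\alpha\log R$, not the stated $+O(1)$ remainder; the weaker statement already suffices for your conclusion, and the stronger one does hold because $|g(r)-C_\alpha|\le 3C_\alpha e^{-r^2 t}$, so this does not affect the validity of the proof.
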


\begin{proof}
It is quite easy to verify from the expression
\begin{equation}
\gamma_t(\xi,r):=e^{i\xi t}\int_{0}^{t}e^{-s r^2}e^{-i \xi s }ds \, 
\end{equation}
that
$$|\gamma_t(\xi,r)|^2=\frac{1-2\cos(\xi t)e^{-r^2 t}+e^{-2r^2 t}}{r^4+\xi^2}\, .$$ 
With the change of variables $u=r^2$ and using the parity in $\xi$ of the function $\frac{|\gamma_t(\xi,r)|^2}{|\xi|^{\alpha-1}},$
we rewrite the integral we are dealing with as
\begin{align}
\int_{0}^{2^n}\frac{dr}{r^{-2\alpha-2\kappa+1}}\int_{|\xi|\leq 4^n}\frac{d\xi}{|\xi|^{\alpha-1}}|\gamma_t(\xi,r)|^2 &=
\int_{0}^{4^n}\frac{dr}{r^{-\alpha-\kappa+1}}\int_{0}^{4^n}\frac{d\xi}{{\xi}^{\alpha-1}}\frac{1-2\cos(\xi t)e^{-r t}+e^{-2r t}}{r^2+\xi^2}\nonumber \\
&= 4^{n\kappa}\int_{0}^{1}\frac{dr}{r^{-\alpha-\kappa+1}}\int_{0}^{1}\frac{d\xi}{\xi^{\alpha-1}}\frac{1-2\cos( 4^n \xi t)e^{-4^n r t}+e^{-2.4^n r t}}{r^2+\xi^2}.\nonumber 
\end{align}
We perform the change of variables $(r,\xi) \mapsto T(r,\xi)$ defined by $$ T : (r,\xi) \mapsto T(r,\xi)=\Big(\frac{r}{\xi},\xi\Big).$$
It is readily checked that $T$ is a one-to-one map of $(0,1)^2$ on $$\{(x,y)\in \mathbb{R}^2, x>0, 0<y<\min\Big(1,\frac{1}{x}\Big)\}.$$
Moreover, its reverse is explicitly given by $T^{-1}: (x,y) \mapsto (xy,y)$ whose absolute value of the Jacobian equals $y$.
\begin{align}
\int_{0}^{2^n}\frac{dr}{r^{-2\alpha-2\kappa+1}}\int_{|\xi|\leq 4^n}\frac{d\xi}{|\xi|^{\alpha-1}}&|\gamma_t(\xi,r)|^2
= 4^{n\kappa}\int_0^1 \frac{x^{\al+\kappa-1}}{1+x^2}dx\int_0^1\frac{dy}{y^{1-\kappa}}\Big(1-2\cos( 4^n y t)e^{-4^n xy t}+e^{-2.4^n xy t}\Big)\nonumber \\
& \hspace{0,1cm}+4^{n\kappa}\int_1^{+\infty} \frac{x^{\al+\kappa-1}}{1+x^2}dx\int_0^{\frac{1}{x}}\frac{dy}{y^{1-\kappa}}\Big(1-2\cos( 4^n y t)e^{-4^n xy t}+e^{-2.4^n xy t}\Big). \nonumber
\end{align}
\noindent
\textit{First case: $\ka>0$}.
By Lebesgue's dominated convergence theorem, we observe that
\begin{align}
& \int_0^1 \frac{x^{\al+\kappa-1}}{1+x^2}dx\int_0^1\frac{dy}{y^{1-\kappa}}\Big(-2\cos( 4^n y t)e^{-4^n xy t}+e^{-2.4^n xy t}\Big)\nonumber \\
& \hspace{0,1cm}+\int_1^{+\infty} \frac{x^{\al+\kappa-1}}{1+x^2}dx\int_0^{\frac{1}{x}}\frac{dy}{y^{1-\kappa}}\Big(-2\cos( 4^n y t)e^{-4^n xy t}+e^{-2.4^n xy t}\Big)\underset{n \rightarrow \infty}\rightarrow 0, \nonumber
\end{align}
leading to
$$\int_{0}^{2^n}\frac{dr}{r^{-2\alpha-2\kappa+1}}\int_{|\xi|\leq 4^n}\frac{d\xi}{|\xi|^{\alpha-1}}|\gamma_t(\xi,r)|^2\underset{n \rightarrow \infty}\sim c_1 4^{n \kappa},$$
where 
\begin{eqnarray*}
c_1&=&\int_0^1 \frac{x^{\al+\kappa-1}}{1+x^2}dx\int_0^1\frac{dy}{y^{1-\kappa}}+
\int_1^{+\infty} \frac{x^{\al+\kappa-1}}{1+x^2}dx\int_0^{\frac{1}{x}}\frac{dy}{y^{1-\kappa}}\\
&=&\frac{1}{\kappa}\Bigg(\int_0^1 \frac{x^{\al+\kappa-1}}{1+x^2}dx+\int_1^{+\infty} \frac{x^{\al-1}}{1+x^2}dx\Bigg).
\end{eqnarray*}
A more visual expression of $c_1$ can be found in the appendix.\\
\textit{Second case: $\ka=0$.} Let us recall that
\begin{align}
\int_{0}^{2^n}\frac{dr}{r^{-2\alpha+1}}\int_{|\xi|\leq 4^n}\frac{d\xi}{|\xi|^{\alpha-1}}|\gamma_t(\xi,r)|^2 &=\int_0^1 \frac{x^{\al-1}}{1+x^2}dx\int_0^1\frac{dy}{y}\Big(1-2\cos( 4^n y t)e^{-4^n xy t}+e^{-2.4^n xy t}\Big)\nonumber \\
& \hspace{0,1cm}+\int_1^{+\infty} \frac{x^{\al-1}}{1+x^2}dx\int_0^{\frac{1}{x}}\frac{dy}{y}\Big(1-2\cos( 4^n y t)e^{-4^n xy t}+e^{-2.4^n xy t}\Big). \nonumber
\end{align}
Let us introduce the function $f$ defined for all $T>0$ by
\begin{align}
f(T)&=\int_0^1 \frac{x^{\al-1}}{1+x^2}dx\int_0^1\frac{dy}{y}\Big(1-2\cos( T y )e^{-T xy }+e^{-2T xy }\Big)\nonumber \\
& \hspace{0,1cm}+\int_1^{+\infty} \frac{x^{\al-1}}{1+x^2}dx\int_0^{\frac{1}{x}}\frac{dy}{y}\Big(1-2\cos( T y )e^{-T xy }+e^{-2T xy }\Big).\nonumber
\end{align}
Then, one has by derivation
\begin{align}
f'(T)&=\int_0^1 \frac{x^{\al-1}}{1+x^2}dx\int_0^1 dy\Big(2\sin( T y )e^{-T xy }+2x\cos(Ty)e^{-T xy }-2xe^{-2T xy }\Big)\nonumber \\
& \hspace{0,1cm}+\int_1^{+\infty} \frac{x^{\al-1}}{1+x^2}dx\int_0^{\frac{1}{x}}dy\Big(2\sin( T y )e^{-T xy }+2x\cos(Ty)e^{-T xy }-2xe^{-2T xy }\Big).\nonumber
\end{align}
To deal with this derivative with ease, we will resort to the following technical lemma:

\begin{lemma}\label{teco}
Let $a$, $x$ and $T$ three positive numbers. It holds that:
\begin{eqnarray*}
\int_0^a \sin( T y )e^{-T xy }dy &=&-\frac{x e^{-Tax}\sin(Ta)}{(1+x^2)T}+\frac{1-e^{-Tax}\cos(Ta)}{(1+x^2)T};\\
\int_0^a \cos(Ty)e^{-T xy }dy&=&\frac{ e^{-Tax}\sin(Ta)}{(1+x^2)T}+
\frac{x(1-e^{-Tax}\cos(Ta))}{(1+x^2)T};\\
\int_0^a e^{-2Txy}dy&=&\frac{1-e^{-2Txa}}{2Tx}.
\end{eqnarray*}
\end{lemma}

\noindent
Lemma \ref{teco} with $a=1$ and $a=\frac{1}{x}$ readily entails:
\begin{align}
f'(T)&=\frac{1}{T}\Bigg(\int_0^{+\infty} \frac{x^{\al-1}}{1+x^2}dx+\int_0^{1} \frac{x^{\al-1}}{1+x^2}dx\big(e^{-2Tx}-2\cos(T)e^{-Tx}\big) \nonumber\\
& \hspace{0,1cm}+\int_1^{+\infty} \frac{x^{\al-1}}{1+x^2}dx\big(e^{-2T}-2\cos\Big(\frac{T}{x}\Big)e^{-T}\big)\Bigg).\nonumber
\end{align}
By Lebesgue's dominated convergence theorem, we deduce since $\al \in (0,2)$ that
$$\int_0^{1} \frac{x^{\al-1}}{1+x^2}dx\big(e^{-2Tx}-2\cos(T)e^{-Tx}\big)+\int_1^{+\infty} \frac{x^{\al-1}}{1+x^2}dx\big(e^{-2T}-2\cos\Big(\frac{T}{x}\Big)e^{-T}\big)\underset{T \rightarrow \infty}\rightarrow 0$$
and consequently
$$f'(T)\underset{T \rightarrow \infty}\sim \frac{c_2}{T}\, $$
where $c_2=\displaystyle \int_0^{+\infty} \frac{x^{\al-1}}{1+x^2}dx.$
For the sake of beauty, let us compute the value of the constant $c_2$.
We recall the classical result, coming from complex analysis, stating that for all $\beta \in ]0,1[,$
$$\int_{0}^{+\infty} \frac{d t}{t^{\beta}(1+t)}=\frac{\pi}{\sin \pi \beta}.$$
It yields:
$$f'(T)\underset{T \rightarrow \infty}\sim \frac{\pi}{2\sin(\frac{\al \pi}{2})T}\, .$$
We can finally use a standard comparison argument (see Lemma \ref{int} below) to assert that
$$f(T)\underset{T \rightarrow \infty}\sim \frac{\pi}{2\sin(\frac{\al \pi}{2})}\ln(T)  .$$
We obtain the equivalent we are looking for, namely
$$\int_{0}^{2^n}\frac{dr}{r^{-2\alpha+1}}\int_{|\xi|\leq 4^n}\frac{d\xi}{|\xi|^{\alpha-1}}|\gamma_t(\xi,r)|^2\underset{n \rightarrow \infty}\sim \frac{\pi \ln(2)}{\sin(\frac{\al \pi}{2})}n .$$
\end{proof}

\begin{lemma}\label{int}
Fix $a \in \mathbb{R}$ and let $g:[a,+\infty[\rightarrow \mathbb{R}$, $h:[a,+\infty[\rightarrow (0,\infty)$, be two continuous functions. If $g(t)\underset{t \rightarrow \infty}\sim h(t)$ and $\int_a^{+\infty}h(t)dt=\infty$, then
$$\int_a^{T}g(t)dt\underset{T \rightarrow \infty}\sim \int_a^{T}h(t)dt \, .$$
\end{lemma}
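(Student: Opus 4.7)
The plan is to run a standard $\varepsilon$-argument, exploiting the divergence of $\int_a^T h$ to absorb the contributions from a bounded initial segment. Fix $\varepsilon>0$. Since $g(t)\sim h(t)$ as $t\to \infty$ and $h>0$, there exists $T_0=T_0(\varepsilon)\geq a$ such that
$$
(1-\varepsilon)\, h(t)\,\leq\, g(t)\,\leq\, (1+\varepsilon)\, h(t)\qquad \text{for all } t\geq T_0.
$$
In particular $|g(t)-h(t)|\leq \varepsilon\, h(t)$ on $[T_0,\infty)$.

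First I would split the integrals at $T_0$: for $T\geq T_0$,
$$
\int_a^T g(t)\,dt-\int_a^T h(t)\,dt \;=\; \underbrace{\int_a^{T_0}\bigl(g(t)-h(t)\bigr)dt}_{=:C(\varepsilon)} \;+\; \int_{T_0}^T\bigl(g(t)-h(t)\bigr)dt,
$$
where $C(\varepsilon)$ is a finite constant (by continuity of $g$ and $h$ on the compact interval $[a,T_0]$). The second term is controlled by
$$
\Bigl|\int_{T_0}^T\bigl(g(t)-h(t)\bigr)dt\Bigr| \;\leq\; \varepsilon \int_{T_0}^T h(t)\,dt \;\leq\; \varepsilon \int_a^T h(t)\,dt .
$$

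Now I would divide through by $\int_a^T h(t)\,dt$, which tends to $+\infty$ by hypothesis. This yields
$$
\left| \frac{\int_a^T g(t)\,dt}{\int_a^T h(t)\,dt}-1\right| \;\leq\; \frac{|C(\varepsilon)|}{\int_a^T h(t)\,dt}+\varepsilon .
$$
Taking $\limsup_{T\to\infty}$ kills the first term (since the numerator is fixed and the denominator diverges), leaving $\limsup_{T\to\infty}\bigl|\int_a^T g/\int_a^T h-1\bigr|\leq \varepsilon$. As $\varepsilon>0$ was arbitrary, the ratio converges to $1$, which is exactly the claim $\int_a^T g\sim \int_a^T h$.

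There is no real obstacle here: the proof is a two-line $\varepsilon$-argument, and the only subtlety is making sure to absorb the (finite but potentially large) constant $C(\varepsilon)$, which is precisely the reason the divergence hypothesis $\int_a^{+\infty}h=\infty$ is needed. Note also that positivity of $h$ is used only to write $|g-h|\leq \varepsilon h$ (rather than $\varepsilon |h|$) and to ensure that $\int_a^T h(t)\,dt$ is nondecreasing and eventually positive, so that division is legitimate for $T$ large enough.
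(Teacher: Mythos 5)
Your proof is correct. The paper actually states Lemma~\ref{int} without any proof (it is invoked as ``a standard comparison argument''), so there is nothing to compare against; your $\varepsilon$-argument --- splitting at $T_0(\varepsilon)$, bounding the tail by $\varepsilon\int_a^T h$, and letting the divergence of $\int_a^T h$ absorb the fixed constant $C(\varepsilon)$ --- is exactly the standard argument the authors are implicitly relying on, and every step (finiteness of $C(\varepsilon)$ by continuity, positivity of $h$ to justify the division) is justified.
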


\subsection{Details about the definition of the linear solution}\label{subsec:glob-def}

\

\smallskip

As in \cite{deya3}, the statement of Proposition~\ref{sto} provides us with a \emph{local} definition of the process $\<Psi>$, that is, up to multiplication by $\chi \in \cac_c^\infty(\R^d)$. To put it differently, what is actually given by the proposition is the set of the limit elements $\{\chi\<Psi>, \, \chi\in \cac_c^\infty(\R^d)\}$. Let us briefly recall how those elements can be gathered into a single process $\<Psi>$.

\smallskip

Fix $p\geq 2$ and $\al$ verifying~\eqref{assump-gene-al}. We denote by $\cp$ the set of sequences $\si=(\si_k)_{k\geq 1}$ such that for every $k\geq 1$, $\si_k:\R^d \to \R$ is a smooth function verifying
\begin{equation*}
    \si_k(x)=  \left\{
    \begin{array}{ll}
		1 & \mbox{if}\  \|x\|\leq k \, ,\\
        0 & \mbox{if} \ \|x\|\geq k+1\, .  
    \end{array}
\right.
\end{equation*}

Let us fix such a sequence $\si$, and for all $k\geq 1$, we call $\<Psi>^{(\si_k)}$ the limit of the sequence $(\si_k\<Psi>_n)_{n\geq 1}$ in the space $\cac([0,T];\cw^{-\al,p}(\R^d))$, as given by Proposition~\ref{sto}. As $\<Psi>^{(\si_k)}$ is defined on a probability space $\Omega^{(\si_k)}$ of full measure $1$, $\Omega^{(\si)}:=\cap_{k\geq 1}\Omega^{(\si_k)}$ is still of measure $1$.

\smallskip

For each time $t \in [0,T]$, we are now able to define the random distribution 
$$\<Psi>^{(\si)}(t):\Omega^{(\si)}\to \cd'(\R^d)$$
as follows: for all smooth compactly-supported function $\vp:\R^d \to \R$ such that $\textup{supp}(\vp) \subset B(0,k)$ (for some $k\geq 1$),
\begin{equation*}
\langle \<Psi>^{(\si)}(t), \vp \rangle := \langle \<Psi>^{(\si_k)}(t), \vp \rangle \, .
\end{equation*}

\begin{proposition}\label{prop:defi-psi-glo}
\begin{enumerate}
\item The above distribution $\<Psi>^{(\si)}$ is well defined, i.e. for every $1\leq k\leq \ell$ and for all smooth compactly-supported function $\vp:\R^d \to \R$ with $\textup{supp}(\vp) \subset B(0,k)\subset B(0,\ell)$, one has
$$\langle \<Psi>^{(\si_k)}(t), \vp \rangle =\langle \<Psi>^{(\si_\ell)}(t), \vp \rangle  \quad \text{on} \ \ \Omega^{(\si)} \, .$$
\item For any smooth compactly-supported function $\chi:\R^d \to \R$, one has, on $\Omega^{(\si)}$,
$$\chi \cdot\<Psi>_n \underset{n \rightarrow \infty}\rightarrow \chi\cdot \<Psi>^{(\si)} \quad \text{in}\ \ \mathcal{C}([0,T]; \mathcal{W}^{-\al,p}(\mathbb{R}^d)) \, .$$
\item If $\si,\ga \in \cp$, it holds that
$$\<Psi>^{(\si)}=\<Psi>^{(\ga)} \quad \text{on} \ \ \Omega^{(\si)}\cap \Omega^{(\ga)} \, .$$
\end{enumerate}
Thanks to the latter identification property, we can set $\<Psi>:=\<Psi>^{(\si)}$, as soon as $\si\in \cp$ is a fixed element.
\end{proposition}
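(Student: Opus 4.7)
The strategy for all three items is to exploit the locality of multiplication by a smooth cutoff: if $\chi\equiv 1$ on the support of a test function $\vp$, then $\langle \chi u,\vp\rangle=\langle u,\vp\rangle$ for every distribution $u$. Combined with the continuity of the distributional pairing on $\cw^{-\al,p}(\R^d)$, this essentially reduces the proof to bookkeeping with the limiting objects provided by Proposition~\ref{sto}.

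For part (1), if $\vp\in\cac_c^\infty(\R^d)$ has support in $B(0,k)\subset B(0,\ell)$, then $\si_k\vp=\si_\ell\vp=\vp$, hence $\langle \si_k\<Psi>_n(t),\vp\rangle=\langle\<Psi>_n(t),\vp\rangle=\langle \si_\ell\<Psi>_n(t),\vp\rangle$ for every $n$. On $\Omega^{(\si)}\subset\Omega^{(\si_k)}\cap\Omega^{(\si_\ell)}$ both sides converge in $\cw^{-\al,p}(\R^d)$ by Proposition~\ref{sto}, and taking $n\to\infty$ gives the desired identity. Part (3) follows by exactly the same principle, with $\ga_k$ in place of $\si_\ell$: for any $\vp$ supported in $B(0,k)$ one gets $\langle \<Psi>^{(\si)}(t),\vp\rangle=\lim_n\langle\<Psi>_n(t),\vp\rangle=\langle\<Psi>^{(\ga)}(t),\vp\rangle$ on $\Omega^{(\si)}\cap\Omega^{(\ga)}$, so the two distributions coincide.

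For part (2), fix $\chi\in\cac_c^\infty(\R^d)$ and choose $k$ with $\textup{supp}(\chi)\subset B(0,k)$, so that $\chi\si_k=\chi$. Writing $\chi\<Psi>_n=\chi(\si_k\<Psi>_n)$ and using that multiplication by a fixed smooth compactly-supported function is a bounded operator on $\cw^{-\al,p}(\R^d)$, the convergence $\si_k\<Psi>_n\to\<Psi>^{(\si_k)}$ in $\cac([0,T];\cw^{-\al,p}(\R^d))$ granted by Proposition~\ref{sto} transfers to $\chi\<Psi>_n\to\chi\<Psi>^{(\si_k)}$ in the same space. To identify the limit with $\chi\<Psi>^{(\si)}$, one tests against an arbitrary $\vp\in\cac_c^\infty(\R^d)$: since $\chi\vp$ is supported in $B(0,k)$, the very definition of $\<Psi>^{(\si)}(t)$ yields $\langle\<Psi>^{(\si)}(t),\chi\vp\rangle=\langle\<Psi>^{(\si_k)}(t),\chi\vp\rangle$, i.e.\ $\chi\<Psi>^{(\si)}(t)=\chi\<Psi>^{(\si_k)}(t)$ as distributions, and hence in $\cw^{-\al,p}(\R^d)$. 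The only genuinely technical point in the whole proof is precisely this identification: one must invoke the (standard) fact that pointwise multiplication by a fixed $\chi\in\cac_c^\infty(\R^d)$ defines a bounded operator on the negative-order Sobolev space $\cw^{-\al,p}(\R^d)$, in order to promote $\chi\<Psi>^{(\si)}$ from a mere distribution to a bona fide element of $\cw^{-\al,p}(\R^d)$. Once this is in hand, everything else is routine manipulation of the definitions.
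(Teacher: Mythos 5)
Your proof is correct, and it is exactly the argument the paper has in mind: the paper itself omits the proof of this proposition (deferring to the analogous construction in \cite{deya3}), and the intended reasoning is precisely the localization bookkeeping you carry out — using $\si_k\vp=\vp$ and continuity of the pairing for items (1) and (3), and the factorization $\chi\<Psi>_n=\chi(\si_k\<Psi>_n)$ together with boundedness of multiplication by a fixed $\chi\in\cac_c^\infty(\R^d)$ on $\cw^{-\al,p}(\R^d)$ for item (2). You also correctly isolate the one non-trivial ingredient (that multiplication operator bound, which is classical for $1<p<\infty$ and any $s\in\R$), so nothing is missing.
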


\begin{remark}
The latter procedure can of course be used to give a rigorous meaning to the second-order process $\chi^2 \<Psi2>$ as a distribution-valued function $\<Psi2>$.
\end{remark}

\section{Rougher stochastic constructions when the space dimension equals 2}\label{rough}

The aim of this section is to establish the proofs of Proposition \ref{prop:stoch-constr} and Proposition \ref{prop:limite-case}, that is to construct the second order stochastic process $\<Psi2>$ in the roughest case, namely when $\frac{3}{2} < 2H_0+H_1+H_2 \leq  \frac74 $, and to show that this construction is in some sense optimal insofar as 
$$\mathbb{E}\Big[ \|\chi \cdot \<Psi2>^n(t,.)\|_{H^{-2\al}}^2\Big]\underset{n\to +\infty}{\longrightarrow} \ +\infty,$$
for all compactly-supported function $\chi$ and $t>0$ when $2H_0+H_1+H_2 \leq  \frac32.$

\subsection{Additional notations}

To begin with, let us introduce some notations that will be frequently used in the proofs of Proposition \ref{prop:stoch-constr} and Proposition \ref{prop:limite-case}.
For all $\tau\in \big\{\<Psi>,\<Psi2> \big\}$, $0\leq n \leq m$ and  $0\leq s,t\leq 1$, let us set $\tau^{n,m} := \tau^m-\tau^n$, and then, for $f\in \{\tau^{n},\tau^{m},\tau^{n,m}\}$, $f_{s,t} := f_t-f_s$.

\smallskip
Then, we set for all $H=(H_0,H_1,H_2)\in (0,1)^3$, $\eta \in \R^2$,
\begin{equation}\label{n-h}
K^H(\eta):= \frac{|\eta_1|^{1-2H_1} |\eta_2|^{1-2H_2}}{1+|\eta|^{4H_0}} \ .
\end{equation}
For all $a=(a_1,a_2)$, resp. $b=(b_1,b_2)$, with $a_i\in \{n,m,\{n,m\}\}$, resp. $b_i\in \{s,t,\{s,t\}\}$, we write
$$L^{H,a}_b(\eta) := \frac{1}{|\eta_1|^{2H_1-1} |\eta_2|^{2H_2-1}} \int_{(\xi,\eta)\in D^{a_1} \cap D^{a_2}} d\xi \, \frac{\ga_{b_1}(\xi,|\eta|) \overline{\ga_{b_2}(\xi,|\eta|)}}{|\xi|^{2H_0-1}}  $$
where
$$D_n:=B_{2n}^1 \times B_n^2 \quad \text{with} \quad B_\ell^k:=\left\{ \lambda \in \mathbb{R}^k: |\lambda| \leq 2^\ell \right\} \quad \text{and} \quad D^{n,m}:= D^m \backslash D^n  \ ,$$
in such a way that for all $y,\yti\in \R^2$, 
\begin{equation}\label{cova-gene}
\mathbb{E}\big[  \<Psi>^{a_1}_{b_1}(y)\overline{\<Psi>^{a_2}_{b_2}(\tilde{y})}\big]=c_H ^2\int_{\R^2} d\eta \, e^{\imath \langle \eta,y\rangle}e^{-\imath \langle \eta,\yti\rangle} L^{H,a}_b(\eta) \ .
\end{equation}
In the following, we will resort to several technical lemmas whose statements and proofs can be found in the Appendix. In particular, Lemma \ref{lem:handle-chi-correc} is of major interest since it describes in some way the role of the cut-off function $\chi$ which allows a gain of integrability.

\subsection{Proof of Proposition \ref{prop:stoch-constr}}

The strategy is exactly the same as the one developed in the proof of Proposition \ref{sto} and results from the combination of Kolmogorov criterion and Garsia-Rodemich-Rumsey lemma. Let us write
\begin{align}
&\mathbb{E}\bigg[\Big|\mathcal{F}^{-1}\Big(\{1+|.|^2\}^{-\al}\mathcal{F}\big(\chi^2 \cdot \<Psi2>^{n,m}_{s,t}\big)\Big)(x)\Big|^{2}\bigg]\nonumber\\
&=\frac{1}{(2\pi)^{4}}\iint_{(\R^2)^2} \frac{d\la d\lati}{\{1+|\la|^2\}^{\al} \{1+|\lati|^2\}^{\al}}e^{\imath \langle x,\la-\lati\rangle} \iint_{(\R^2)^2} d\xi d\xiti \, \widehat{\chi^2}(\la-\xi) \overline{\widehat{\chi^2}(\lati-\xiti)} \mathcal{Q}_{n,m;s,t}(\xi,\xiti) \, ,\label{bound-correc-q}
\end{align}
with
\begin{eqnarray*}
\mathcal{Q}_{n,m;s,t}(\xi,\xiti)&:=&\mathbb{E}\Big[\cf\big(\<Psi2>^{n,m}_{s,t}\big)(\xi) \overline{\cf\big(\<Psi2>^{n,m}_{s,t}\big)(\xiti)}\Big]\\
&=&\ \iint_{(\R^2)^2} dy d\yti \, e^{-\imath \langle \xi,y \rangle}e^{\imath \langle \tilde{\xi},\yti\rangle} \mathbb{E}\big[  \<Psi2>^{n,m}_{s,t}(y)\overline{\<Psi2>^{n,m}_{s,t}(\tilde{y})}\big].
\end{eqnarray*}
\noindent
Resorting to Wick formula, we derive 
\begin{eqnarray*}\label{exp-wick}
\mathbb{E}\big[  \<Psi2>^{n,m}_{s,t}(y)\overline{\<Psi2>^{n,m}_{s,t}(\tilde{y})}\big]&=&\sum_{(a,b)\in \ca}c_{a,b}\, \mathbb{E}\big[  \<Psi>^{a_1}_{b_1}(y)\overline{\<Psi>^{a_2}_{b_2}(\tilde{y})}\big] \mathbb{E}\big[  \<Psi>^{a_3}_{b_3}(y)\overline{\<Psi>^{a_4}_{b_4}(\tilde{y})}\big]\\
&=&\sum_{(a,b)\in \ca}\tilde{c_{a,b}}\, \iint_{(\R^2)^2} d\eta d\etati \,  e^{\imath \langle \eta,y\rangle}e^{-\imath \langle \eta,\yti\rangle}  e^{\imath \langle \etati,y\rangle}e^{-\imath \langle \etati,\yti\rangle} L^{H,(a_1,a_2)}_{b_1,b_2}(\eta)L^{H,(a_3,a_4)}_{b_3,b_4}(\etati) \ ,
\end{eqnarray*}
for some index set $\ca$ such that $a_i\in \{n,m,\{n,m\}\}$, $b_i\in \{s,t,\{s,t\}\}$, and one has both $\{a_1,\ldots,a_4\} \cap \{\{n,m\}\}\neq \emptyset$ and $\{b_1,\ldots,b_4\} \cap \{\{s,t\}\}\neq \emptyset$.
It leads us to
\begin{align}
   \mathcal{Q}_{n,m;s,t}(\xi,\xiti)&=\sum_{(a,b)\in \ca}\tilde{c_{a,b}} \iint_{(\R^2)^2} d\eta d\etati \,  L^{H,(a_1,a_2)}_{b_1,b_2}(\eta)L^{H,(a_3,a_4)}_{b_3,b_4}(\etati)\delta_{\{\xi=\eta+\etati\}} \delta_{\{\xiti=\eta+\etati\}}  \ . \nonumber
\end{align}
Consequently,
$$\mathbb{E}\bigg[\Big|\mathcal{F}^{-1}\Big(\{1+|.|^2\}^{-\al}\mathcal{F}\big(\chi^2 \cdot \<Psi2>^{n,m}_{s,t}\big)\Big)(x)\Big|^{2}\bigg]=\sum_{(a,b)\in \ca} \phi_{a,b},$$
with
\begin{align}
  \phi_{a,b}&= \tilde{c_{a,b}} \iint_{(\R^2)^2} \frac{d\la d\lati}{\{1+|\la|^2\}^{\al} \{1+|\lati|^2\}^{\al}}e^{\imath \langle x,\la-\lati\rangle}\nonumber \\ 
  &\hspace{1cm}\iint_{(\R^2)^2} d\eta d\etati L^{H,(a_1,a_2)}_{b_1,b_2}(\eta)L^{H,(a_3,a_4)}_{b_3,b_4}(\etati)\widehat{\chi^2}(\la-(\eta+\etati)) \overline{\widehat{\chi^2}(\lati-(\eta+\etati))}.
\end{align}
With the help of Lemma~\ref{lem:handle-chi-correc} and the hypercontractivity of Gaussian chaoses, we obtain
\begin{align*}
\int_{\mathbb{R}^2}dx\, \mathbb{E}\bigg[\Big|\mathcal{F}^{-1}\Big(\{1+|.|^2\}^{-\al}\mathcal{F}\big(\chi^2 \cdot \<Psi2>^{n,m}_{s,t}\big)\Big)(x)\Big|^{2p}\bigg]\lesssim  \left(\sum_{(a,b)\in \ca} \psi_{a,b}\right)^p \, ,
\end{align*}
where 
\begin{align}
  \psi_{a,b}&=  \iint_{(\R^2)^2}  d\eta d\etati \, \{1+|\eta+\etati|^2\}^{-2\alpha} |L^{H,(a_1,a_2)}_{b_1,b_2}(\eta)||L^{H,(a_3,a_4)}_{b_3,b_4}(\etati)|. 
\end{align}
Now, for $0<\varepsilon<H_0$, Lemma \ref{lem:bou-l-h} provides us with the bound
\begin{align}
    \psi_{a,b}&\lesssim 2^{-2n\varepsilon}|t-s|^{\varepsilon} \iint_{(\R^2)^2}  d\eta d\etati \, \{1+|\eta+\etati|^2\}^{-2\alpha}\nonumber\\
    &\hspace{0,8cm}\big\{ K^{H_{\varepsilon,0}}(\eta)+K^{H_{\varepsilon,0,1}}(\eta)+K^{H_{\varepsilon,0,2}}(\eta) \big\}\big\{ K^{H_{\varepsilon,0}}(\etati)+K^{H_{\varepsilon,0,1}}(\etati)+K^{H_{\varepsilon,0,2}}(\etati) \big\}.
\end{align}

According to Lemma \ref{lem:tech-ordre-deux}, the latter quantity is finite as soon as $\varepsilon$ is small enough and we have finally obtained:
$$\int_{\mathbb{R}^2}dx\, \mathbb{E}\bigg[\Big|\mathcal{F}^{-1}\Big(\{1+|.|^2\}^{-\al}\mathcal{F}\big(\chi^2 \cdot \<Psi2>^{n,m}_{s,t}\big)\Big)(x)\Big|^{2p}\bigg]\lesssim 2^{-2n\varepsilon p}|t-s|^{ \varepsilon p}.$$
We can mimic the arguments at the end of the proof of Proposition \ref{sto} to get the result.

\subsection{Proof of Proposition \ref{prop:limite-case}}
Suppose that $d\geq1$ and that $(H_0,H_1,...,H_d)\in (0,1)^{d+1}$ verifies the condition $2H_0+H_1+\cdots+H_d \leq \frac{3}{4}d.$
As usual, we will use the notation $A\gtrsim B$ whenever one can find a constant $c>0$ such that $A\geq c B$. Let us also introduce the additional notation
\begin{equation}
\gga^{H_0,n}_{t}(r):=\int_{-4^n}^{4^n} d\xi \, \frac{|\ga_t(\xi,r)|^2 }{|\xi|^{2H_0-1}} \ .
\end{equation}

\smallskip

\noindent
Fix $t>0$. Using (\ref{cova-gene}) and then Wick formula, we get that
\begin{eqnarray*}
\lefteqn{\mathbb{E}\Big[ \|\chi \cdot \<Psi2>^n(t,.)\|_{H^{-2\al}}^2\Big]} \\
&=&c\int_{|\eta|\leq 2^n} d\eta \int_{|\etati|\leq 2^n} d\etati\, \prod_{i=1}^{d}\frac{1}{|\eta_i|^{2H_i-1}} \prod_{i=1}^{d}\frac{1}{|\etati_i|^{2H_i-1}} \gga^{H_0,n}_{t}(|\eta|) \gga^{H_0,n}_{t}(|\etati|)  \int_{\R^d} \frac{d\xi}{\{1+|\xi|^2\}^{2\al}} \big| \widehat{\chi}(\xi-(\eta-\etati))\big|^2 \, .
\end{eqnarray*}
With the change of variables $\tilde{\xi}:=\xi-(\eta-\etati)$, it holds that
\begin{align*}
&\int_{\R^d} \frac{d\xi}{\{1+|\xi|^2\}^{2\al}} \big| \widehat{\chi}(\xi-(\eta-\etati))\big|^2\\
&= \int_{\R^d} \frac{d\xi}{\{1+|\xi+(\eta-\etati)|^2\}^{2\al}} \big| \widehat{\chi}(\xi)\big|^2 \gtrsim \frac{1}{\{1+|\eta-\etati|^2\}^{2\al}}\int_{\R^d}\frac{d\xi}{\{1+|\xi|^2\}^{2\al}} \big| \widehat{\chi}(\xi)\big|^2  \, .
\end{align*}
As $\chi$ is a non-zero compactly-supported function, the support of $\widehat{\chi}$ contains a ball of radius $R>0$ that guarantees $\displaystyle\int_{\R^d}\frac{d\xi}{\{1+|\xi|^2\}^{2\al}} \big| \widehat{\chi}(\xi)\big|^2>0.$ Performing the changes of variables $r_1=\frac{\eta_1-\etati_1}{\eta_1},\cdots, r_d=\frac{\eta_d-\etati_d}{\eta_d}$ (second inequality) followed by $\tilde{r_1}=\eta_1 r_1,\cdots, \tilde{r_d}=\eta_d r_d$ (third inequality), we write
\begin{eqnarray*}
\lefteqn{\mathbb{E}\Big[ \|\chi \cdot \<Psi2>^n(t,.)\|_{H^{-2\al}}^2\Big]}\\
&\gtrsim&\int_{0}^{\frac{2^{n}}{\sqrt{d}}} d\eta_1 \int_{\frac12 \eta_1}^{\eta_1} d\etati_1 \cdots \int_0^{\frac{2^{n}}{\sqrt{d}}} d\eta_d \int_{\frac12 \eta_d}^{\eta_d} d\etati_d \,   \frac{\gga^{H_0,n}_{t}(|\eta|) \gga^{H_0,n}_{t}(|\etati|)}{\{1+|\eta-\etati|^2\}^{2\al}}  \prod_{i=1}^{d}\frac{1}{|\eta_i|^{2H_i-1}} \prod_{i=1}^{d}\frac{1}{|\etati_i|^{2H_i-1}}  \\
&\gtrsim& \int_{0}^{\frac{2^{n}}{\sqrt{d}}}\cdots \int_0^{\frac{2^{n}}{\sqrt{d}}}d\eta_1\cdots d\eta_d \prod_{i=1}^{d}\frac{1}{|\eta_i|^{4H_i-3}}\int_0^{\frac12}\cdots \int_0^{\frac12}\frac{dr_1\cdots dr_d}{\{1+\eta_1^2r_1^2+\cdots+\eta_d^2r_d^2\}^{2\al}} \\
& &\hspace{6cm} \gga^{H_0,n}_{t}(|\eta|) \gga^{H_0,n}_{t} \Big( \sqrt{\eta_1^2 (1-r_1)^2+\cdots+\eta_d^2(1-r_d)^2}\Big)\\
&\gtrsim& \int_{0}^{\frac{2^{n}}{\sqrt{d}}}\cdots \int_0^{\frac{2^{n}}{\sqrt{d}}}d\eta_1\cdots d\eta_d \prod_{i=1}^{d}\frac{1}{|\eta_i|^{4H_i-2}}\int_0^{\frac{1}{2}\eta_1}\cdots \int_0^{\frac{1}{2}\eta_d}\frac{dr_1\cdots dr_d}{\{1+r_1^2+\cdots+r_d^2\}^{2\al}} \\
& &\hspace{6cm} \gga^{H_0,n}_{t}(|\eta|) \gga^{H_0,n}_{t} \Big( \sqrt{\eta_1^2 \bigg(1-\frac{r_1}{\eta_1}\bigg)^2+\cdots+\eta_d^2\bigg(1-\frac{r_d}{\eta_d}\bigg)^2}\Big).
\end{eqnarray*}
The hyperspherical change of variables below\\
$$
\left\{
    \begin{array}{ll}
        \ |\eta|=r \\
        \ \eta_1=r\cos(\theta_1)\\
        \ \eta_2=r\sin(\theta_1)\cos(\theta_2)\\
        \ .\\
        \ .\\
        \ .\\
        \ \eta_{d-1}=r\sin(\theta_1)\cdots\sin(\theta_{d-2})\cos(\theta_{d-1})\\
        \ \eta_{d}=r\sin(\theta_1)\cdots\sin(\theta_{d-2})\sin(\theta_{d-1})\\
    \end{array}
\right.
$$
whose absolute value of the Jacobian equals $r^{d-1}\prod\limits_{i=1}^{d}|\sin(\theta_i)|^{d-1-i}$ entails that
\begin{eqnarray*}
\lefteqn{\mathbb{E}\Big[ \|\chi \cdot \<Psi2>^n(t,.)\|_{H^{-2\al}}^2\Big]}\\
&\gtrsim& \int_{[\frac{\pi}{8},\frac{\pi}{4}]^{d-1}} d\theta_1\cdots d\theta_{d-1} \int_2^{2^{n}} \frac{dr}{r^{4(H_1+\dots+H_d)-3d+1}} \int_0^{\frac12 r \cos \theta_1 }\cdots \int_0^{\frac12 r\sin(\theta_1)\cdots\sin(\theta_{d-1})}\frac{dr_1\cdots dr_d}{\{1+r_1^2+\cdots+r_d^2\}^{2\al}} \\
& &\gga^{H_0,n}_{t}(r) \gga^{H_0,n}_{t} \Bigg( \sqrt{r^2 \cos^2\theta_1 \Big(1-\frac{r_1}{r \cos \theta_1}\Big)^2+\cdots+r^2 \sin^2\theta_1\cdots\sin^2\theta_{d-1}\Big(1-\frac{r_d}{r \sin \theta_1\cdots\sin\theta_{d-1}}\Big)^2}\Bigg).
\end{eqnarray*}
\noindent
A quick view on the integration domain reveals that
$$r \geq \tilde{r}_{\theta}:= \sqrt{r^2 \cos^2\theta_1 \Big(1-\frac{r_1}{r \cos \theta_1}\Big)^2+\cdots+r^2 \sin^2\theta_1\cdots\sin^2\theta_{d-1}\Big(1-\frac{r_d}{r \sin \theta_1\cdots\sin\theta_{d-1}}\Big)^2} \geq \frac12 r \geq 1 \ .$$
Resorting to Lemma \ref{tech-lem-explos}, the (forthcoming) lower bound (\ref{lower-bound-gga-n}) leads to
\begin{eqnarray*}
\lefteqn{\mathbb{E}\Big[ \|\chi \cdot \<Psi2>^n(t,.)\|_{H^{-2\al}}^2\Big] \ \gtrsim \ \int_{[\frac{\pi}{8},\frac{\pi}{4}]^{d-1}} d\theta_1\cdots d\theta_{d-1}  \int_2^{2^{n}} \frac{dr}{r^{4(H_1+\dots+H_d)-3d+1}} }\nonumber\\
& &\hspace{2cm}  \int_0^{\frac12 r \cos \theta_1 }\cdots \int_0^{\frac12 r\sin(\theta_1)\cdots\sin(\theta_{d-1})}\frac{dr_1\cdots dr_d}{\{1+r_1^2+\cdots+r_d^2\}^{2\al}} \frac{1}{r^{8H_0}}(1-2e^{-t}+e^{-2t})^2\\
&\gtrsim& (1-2e^{-t}+e^{-2t})^2\bigg(\int_0^{\cos \frac{\pi}{8} }\cdots\int_0^{\sin (\frac{\pi}{8})^{d-1}}\frac{dr_1\cdots dr_d}{\{1+r_1^2+\cdots+r_d^2\}^{2\al}}\bigg)\bigg(\int_2^{2^{n}} \frac{dr}{r^{4(2H_0+H_1+\dots+H_d)-3d+1}}\bigg) \ .\label{low-bou-pr}
\end{eqnarray*}

\smallskip

\noindent
As $2H_0+H_1+\cdots+H_d \leq \frac{3}{4}d$, $4(2H_0+H_1+\dots+H_d)-3d+1\leq 1$,  and consequently
$$\int_2^{2^{n}} \frac{dr}{r^{4(2H_0+H_1+\dots+H_d)-3d+1}} \ \underset{n\to +\infty}{\longrightarrow} \ +\infty \ .$$
We then get the desired conclusion
$$\mathbb{E}\Big[ \|\chi \cdot \<Psi2>^n(t,.)\|_{H^{-2\al}}^2\Big]\underset{n\to +\infty}{\longrightarrow} \ +\infty.$$

\

\begin{lemma}\label{tech-lem-explos}
For all ${H_0}\in (0,1)$, $n\geq 1$, $t>0$ and $r \in [1,2^n]$,
\begin{equation}\label{lower-bound-gga-n}
\gga^{H_0,n}_t(r) \gtrsim \frac{1}{r^{4H_0}}(1-2e^{- t}+e^{-2 t})>0.
\end{equation} 
\end{lemma}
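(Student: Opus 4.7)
The plan is to start from the explicit closed form of $\gamma_t$, which has already been noted in the proof of Proposition \ref{equi}: namely,
$$\gamma_t(\xi,r) = \frac{e^{i\xi t}-e^{-r^2 t}}{r^2+i\xi}, \qquad |\gamma_t(\xi,r)|^2 = \frac{|e^{i\xi t}-e^{-r^2 t}|^2}{r^4+\xi^2} = \frac{1-2\cos(\xi t)e^{-r^2 t}+e^{-2r^2 t}}{r^4+\xi^2}.$$
The first step is then to observe a \emph{pointwise} lower bound on the numerator that is independent of $\xi$. Viewing $1-2\cos(\xi t)e^{-r^2 t}+e^{-2r^2 t}$ as an affine function of $\cos(\xi t)\in[-1,1]$, its minimum over $\xi$ is attained when $\cos(\xi t)=1$ and equals $(1-e^{-r^2 t})^2$. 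Hence
$$|\gamma_t(\xi,r)|^2 \geq \frac{(1-e^{-r^2 t})^2}{r^4+\xi^2}, \qquad \text{for all } \xi\in\mathbb{R}.$$

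Next I would exploit the assumption $r\in[1,2^n]$ to remove the dependence on $r$ in the factor $(1-e^{-r^2 t})^2$: since $r^2\geq 1$ and $x\mapsto 1-e^{-x}$ is increasing, we have $(1-e^{-r^2 t})^2 \geq (1-e^{-t})^2 = 1 - 2e^{-t} + e^{-2t}$. Plugging this into the definition of $\gamma^{H_0,n}_t(r)$ yields
$$\gamma^{H_0,n}_t(r) \;\geq\; (1-2e^{-t}+e^{-2t})\int_{-4^n}^{4^n} \frac{d\xi}{(r^4+\xi^2)|\xi|^{2H_0-1}}.$$

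It then remains to show the purely deterministic bound
$$I_n(r) := \int_{-4^n}^{4^n} \frac{d\xi}{(r^4+\xi^2)|\xi|^{2H_0-1}} \;\gtrsim\; \frac{1}{r^{4H_0}}$$
for $r\in[1,2^n]$. This follows by the change of variable $\xi = r^2 u$, which gives (using parity)
$$I_n(r) \;=\; 2\, r^{-4H_0}\int_0^{4^n/r^2} \frac{du}{(1+u^2)\, u^{2H_0-1}}.$$
Because $r\leq 2^n$, we have $4^n/r^2\geq 1$, so the latter integral is bounded below by $\int_0^1 \frac{du}{(1+u^2)u^{2H_0-1}}\geq \frac{1}{2}\int_0^1 u^{1-2H_0}\, du = \frac{1}{2(2-2H_0)}>0$ since $H_0\in(0,1)$. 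Combining the three bounds delivers the desired inequality
$$\gamma^{H_0,n}_t(r) \;\gtrsim\; \frac{1}{r^{4H_0}}\,(1-2e^{-t}+e^{-2t}),$$
and positivity of the right-hand side for $t>0$ is immediate since $1-2e^{-t}+e^{-2t}=(1-e^{-t})^2$. There is no real obstacle in this argument; the only conceptual point is spotting the pointwise minimization of $1-2\cos(\xi t)e^{-r^2 t}+e^{-2r^2 t}$ in $\cos(\xi t)$, after which everything reduces to a one-line scaling computation using the constraint $r\in[1,2^n]$.
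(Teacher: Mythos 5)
Your proof is correct and follows essentially the same route as the paper's: both start from the closed form $|\gamma_t(\xi,r)|^2=\frac{1-2\cos(\xi t)e^{-r^2t}+e^{-2r^2t}}{r^4+\xi^2}$, bound the numerator below by $(1-e^{-r^2t})^2$ via $\cos(\xi t)\leq 1$, rescale $\xi=r^2u$ and use $r\leq 2^n$ to reduce to the fixed integral $\int_0^1\frac{du}{(1+u^2)u^{2H_0-1}}$, and finally use $r\geq 1$ to replace $(1-e^{-r^2t})^2$ by $(1-e^{-t})^2$. The only (immaterial) difference is that the paper restricts the integration domain to $[-r^2,r^2]$ before rescaling, whereas you rescale first and then restrict.
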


\begin{proof}
Remember that
$$|\gamma_t(\xi,r)|^2=\frac{1-2\cos(\xi t)e^{-r^2 t}+e^{-2r^2 t}}{r^4+\xi^2}\, .$$ 
It is clear that $\displaystyle\gga^{H_0,n}_{t}(r)=\int_{-4^n}^{4^n}  \, \frac{|\ga_t(\xi,r)|^2 }{|\xi|^{2H_0-1}}d\xi \geq\int_{-r^2}^{r^2}  \, \frac{|\ga_t(\xi,r)|^2 }{|\xi|^{2H_0-1}}d\xi \ .$
Now, by parity,
\begin{eqnarray*}
\int_{-r^2}^{r^2}  \, \frac{|\ga_t(\xi,r)|^2 }{|\xi|^{2H_0-1}}d\xi&=&2\int_{0}^{r^2}  \, \frac{|\ga_t(\xi,r)|^2 }{|\xi|^{2H_0-1}}d\xi \\
&=&\frac{2}{r^{4H_0}}\int_0^1 \frac{1-2\cos(r^2\xi t)e^{-r^2 t}+e^{-2r^2 t}}{(1+\xi^2)\xi^{2H_0-1}}d\xi\\
&\geq& \frac{2}{r^{4H_0}}\left(\int_0^1 \frac{d\xi}{(1+\xi^2)\xi^{2H_0-1}}\right)(1-2e^{-r^2t}+e^{-2r^2t})>0.
\end{eqnarray*}
Indeed, if we set for all $t\geq 0, r\geq 1,$ $$h(t,r):=1-2e^{-r^2t}+e^{-2r^2t},$$
it is quite easy to check that, when $r\geq1$, $h(.,r)$
 is strictly increasing on $[0,+\infty)$ and, since $h(0,r)=0$, for every $t>0$, $h(t,r)=1-2e^{-r^2t}+e^{-2r^2t}>0.$
 Moreover, we can verify that if $t\geq 0,$ $h(t,.)$ is increasing on $[1,+\infty)$ that provides the conclusion.

\end{proof}

\section{Proof of the main results}\label{end}
\subsection{Proof of Theorem~\ref{resu}}
Suppose that $ d\geq 1$. Let $p\geq2$ and $\be$ be such that $0< \beta < 2H_0+\sum_{i=1}^{d}H_i-d$ and $\frac{d}{2p}<1+\frac{\be}{2}$.\\
Recall that for every $T\geq0,$
\begin{equation}
X^{\be,p}(T):=\mathcal{C}([0,T]; \mathcal{W}^{\be,p}(\mathbb{R}^d)) \, .
\end{equation}
$i)$ The statement of Proposition~\ref{sto} with $\al:=-\be$ and $\chi:=\rho$ ensures the existence of a measurable set $\tilde{\Omega}$ of measure one on which $\rho \<Psi>(\omega) \in X^{\be,p}(T)$. Now, the well-posedness result of Theorem~\ref{resu} comes from the application of Theorem~\ref{thm:regular} (in an almost sure way) to $\luxor:=\rho\<Psi>$.

\smallskip
\noindent
$ii)$ By reproducing the arguments that can be found in the proof of~\cite[Theorem 1.7]{deya-wave}, we see that the convergence property is a consequence from the continuity of $\Gamma_{T,\luxor}$ with respect to $\luxor$ (along~\eqref{bound2}) and the almost sure convergence of $\chi \<Psi>_n$ to $\chi \<Psi>$.

\subsection{Proof of Theorem~\ref{resu1}}

Suppose that $d\geq 1$ and $p\geq2$ verifies that $\frac{d}{2p}\leq \frac{3}{4}.$
Assume that $2H_0+\sum_{i=1}^{d}H_i\leq d$.
Fix $\al>0$ such that $$d-\bigg(2H_0+\sum_{i=1}^{d}H_i\bigg)<\al<\frac{1}{4}.$$
As $\frac{d}{2p}\leq \frac{3}{4}$, observe that $\alpha<\frac{1}{4}\leq 1-\frac{d}{2p}$ and we can pick $\al<\be<\min\bigg(2-\al-\frac{d}{p}, 2-2\al\bigg).$\\
Recall that
\begin{equation}
\mathcal{R}_{\al,p}:=L^\infty\big([0,T];\mathcal{W}^{-\alpha,p}(\mathbb{R}^d)\big)\times L^\infty\big([0,T];\mathcal{W}^{-2\alpha,p}(\mathbb{R}^d)\big) \, .
\end{equation}
$i)$ Proposition~\ref{sto} and Proposition~\ref{sto1} applied with $\al>0$ guarantees the existence of a measurable set $\tilde{\Omega}$ of measure one on which $(\rho \<Psi>(\omega),\rho^2\<Psi2>(\omega)) \in \mathcal{R}_{\al,p}$. Now, the statement of Theorem~\ref{resu1} results from the application of Theorem~\ref{thm:regular} (in an almost sure way) to $(\luxor,\cherry):=(\rho\<Psi>,\rho^2\<Psi2>)$.

\smallskip
\noindent
$ii)$ Again, we remark that the convergence property is a consequence from the continuity of $\Gamma_{T,\luxor,\cherry}$ with respect to $(\luxor,\cherry)$ and the almost sure convergence of $(\chi \<Psi>_n,\chi^2 \<Psi2>_n)$ to $(\chi \<Psi>,\chi^2\<Psi2>)$.

\subsection{Proof of Theorem~\ref{resu2}}

Suppose that $d=2$ and $p\geq2.$
Let $(H_0,H_1,H_2)\in (0,1)^{3}$ be such that 
\begin{equation}
0<H_1<\frac34 \quad , \quad 0<H_2< \frac34 \quad , \quad \frac{3}{2} < 2H_0+H_1+H_2 \leq  \frac74 \ .
\end{equation}
Fix $\al>0$ such that $$2-(2H_0+H_1+H_2)<\al<\frac{1}{2}.$$
As $p\geq 2$, observe that $\alpha<\frac{1}{2}\leq 1-\frac{2}{2p}$ and we can pick $\al<\be<\min\bigg(2-\al-\frac{d}{p}, 2-2\al\bigg).$\\
$i)$ Proposition~\ref{sto} and Proposition~\ref{sto1} applied with $\al>0$ guarantees the existence of a measurable set $\tilde{\Omega}$ of measure one on which $(\rho \<Psi>(\omega),\rho^2\<Psi2>(\omega)) \in \mathcal{R}_{\al}$. Again, Theorem~\ref{resu1} results from the application of Theorem~\ref{thm:regular} (in an almost sure way) to $(\luxor,\cherry):=(\rho\<Psi>,\rho^2\<Psi2>)$.

\smallskip
\noindent
$ii)$ As before, the convergence property is a consequence from the continuity of $\Gamma_{T,\luxor,\cherry}$ with respect to $(\luxor,\cherry)$ and the almost sure convergence of $(\chi \<Psi>_n,\chi^2 \<Psi2>_n)$ to $(\chi \<Psi>,\chi^2\<Psi2>)$.

\section{Appendix}
\subsection{Technical lemmas}
\smallskip

In this subsection, we state the three technical lemmas at the core of the proof of Proposition \ref{prop:stoch-constr}.

\begin{lemma}\label{lem:bou-l-h}
For all $H=(H_0,H_1,H_2)\in (0,1)^3$, $\varepsilon\in (0,H_0)$, $0\leq n\leq m$, $0\leq s,t,u \leq 1$ and $\eta\in \R^2$, it holds that
\begin{equation}\label{bou-l-1}
| L^{H,(m,m)}_{t,t}(\eta) \big| \lesssim K^{H_{\varepsilon,0}}(\eta) 
\end{equation}
and 
\begin{equation}\label{bou-l-2}
| L^{H,((n,m),m)}_{(s,t),u}(\eta) \big|\lesssim 2^{-n\varepsilon}|t-s|^{\frac{\varepsilon}{2}} \big\{ K^{H_{\varepsilon,0}}(\eta)+K^{H_{\varepsilon,0,1}}(\eta)+K^{H_{\varepsilon,0,2}}(\eta) \big\} \ ,
\end{equation}
where  $H_{\varepsilon,0} := (H_0-\varepsilon,H_1,H_2)$, $H_{\varepsilon,0,1} := (H_0-\varepsilon,H_1-\varepsilon,H_2)$, $H_{\varepsilon,0,2} := (H_0-\varepsilon,H_1,H_2-\varepsilon)$, and the proportional constants do no depend on $(n,m)$, $(s,t)$, $u$ and $\eta$.
\end{lemma}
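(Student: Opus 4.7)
The plan is to reduce both estimates to Corollary~\ref{tec}, after exploiting the structure of the restriction $(\xi,\eta)\in D^{n,m}$ in the variation case.

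For \eqref{bou-l-1}, the integrand $|\ga_t(\xi,|\eta|)|^2/|\xi|^{2H_0-1}$ is nonnegative, so the $\xi$-integration over $\{|\xi|\leq 4^m\}$ can be extended to the whole line at no cost. I would then apply Corollary~\ref{tec} with $s=0$ (noting $\ga_0\equiv 0$, so $\ga_{0,t}=\ga_t$) and parameter $\varepsilon\in(0,H_0)$. Since $t\leq 1$, the factor $t^{2\varepsilon}$ is bounded, and what remains is exactly $1/(1+|\eta|^{4(H_0-\varepsilon)})$, which once multiplied by $|\eta_1|^{1-2H_1}|\eta_2|^{1-2H_2}$ is precisely $K^{H_{\varepsilon,0}}(\eta)$.

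For \eqref{bou-l-2}, the key observation is the inclusion
$$D^{n,m} \subset \big\{(\xi,\eta):|\eta|>2^n\big\} \cup \big\{(\xi,\eta):|\eta|\leq 2^n,\ |\xi|>4^n\big\},$$
and I would split the integration accordingly. On the first piece, I apply Cauchy--Schwarz in $\xi$ to decouple $\ga_{s,t}$ from $\ga_u$, then invoke Corollary~\ref{tec} with parameter $\varepsilon/2$ on $|\ga_{s,t}|^2$ and with $s=0$ on $|\ga_u|^2$. This produces a factor $|t-s|^{\varepsilon/2}$ together with a denominator of the form $(1+|\eta|^{4(H_0-\varepsilon/2)})^{1/2}(1+|\eta|^{4H_0})^{1/2}$, from which the extra $|\eta|^{-\varepsilon}\leq 2^{-n\varepsilon}$ can be extracted using $|\eta|\geq 2^n$, while leaving exactly the $1+|\eta|^{4(H_0-\varepsilon)}$ required by $K^{H_{\varepsilon,0}}$. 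On the second piece, the elementary observation $(|\xi|/4^n)^{2\varepsilon}\geq 1$ converts the restriction $|\xi|>4^n$ into a prefactor $4^{-2n\varepsilon}$, at the price of replacing $|\xi|^{2H_0-1}$ by $|\xi|^{2(H_0-\varepsilon)-1}$; Corollary~\ref{tec} then applies with shifted parameter $H_0-\varepsilon$, and the gain $4^{-2n\varepsilon}\leq 2^{-n\varepsilon}$ is more than sufficient.

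The two additional summands $K^{H_{\varepsilon,0,1}}$ and $K^{H_{\varepsilon,0,2}}$ on the right-hand side of \eqref{bou-l-2} arise from the refined pointwise bounds of Lemma~\ref{lem}, where $|\ga_{s,t}(\xi,r)|$ carries an $r^{2\varepsilon'}=|\eta|^{2\varepsilon'}$ contribution alongside the $|\xi|^{\varepsilon'}$ one. Using the subadditivity $|\eta|^{2\varepsilon}\leq |\eta_1|^{2\varepsilon}+|\eta_2|^{2\varepsilon}$ (valid for $2\varepsilon\leq 1$), this extra factor splits onto the two $|\eta_i|^{1-2H_i}$ weights and produces precisely the two $K^{H_{\varepsilon,0,i}}$ contributions. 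The principal obstacle is thus not any single analytic estimate but rather the parameter bookkeeping: one has to choose $\varepsilon$ small enough that all applications of Corollary~\ref{tec} remain valid (in particular $H_0-2\varepsilon>0$ in the shifted case) while matching the exponents prescribed by the three $K$ terms and routing the time-regularity exponent $\varepsilon/2$ properly through the Cauchy--Schwarz step.
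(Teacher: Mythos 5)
Your proposal is correct and follows the paper's strategy in its essentials: extend the $\xi$-integration to the whole line for \eqref{bou-l-1} and apply Corollary~\ref{tec} with $s=0$; for \eqref{bou-l-2}, split $D^{n,m}$ into a large-$\xi$ region and a large-$\eta$ region, decouple $\ga_{s,t}$ from $\ga_u$ by Cauchy--Schwarz, and feed everything back into Corollary~\ref{tec}. The one genuine divergence is on the region $\{|\eta|\geq 2^n\}$: you harvest the factor $2^{-n\varepsilon}$ from the denominators $(1+|\eta|^{4(H_0-\varepsilon/2)})^{1/2}(1+|\eta|^{4H_0})^{1/2}$ produced by Corollary~\ref{tec}, writing $|\eta|^{-\varepsilon}\leq 2^{-n\varepsilon}$ and still retaining $1+|\eta|^{4(H_0-\varepsilon)}$, which yields the single term $K^{H_{\varepsilon,0}}(\eta)$ on that piece. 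The paper instead harvests the gain from the weights: it covers $\{|\eta|\geq 2^n\}$ by $\{|\eta_1|\geq |\eta|/\sqrt{2}\}\cup\{|\eta_2|\geq |\eta|/\sqrt{2}\}$ and writes $\mathbbm{1}_{|\eta|\geq 2^n}|\eta_i|^{1-2H_i}\lesssim 2^{-2n\varepsilon}|\eta_i|^{1-2(H_i-\varepsilon)}$, which is precisely what forces the two extra summands $K^{H_{\varepsilon,0,1}}$ and $K^{H_{\varepsilon,0,2}}$ into the statement. Both extractions are valid; yours is marginally stronger (only $K^{H_{\varepsilon,0}}$ is needed, and the lemma's right-hand side dominates it), while the paper's avoids touching the $H_0$-decay at all. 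One point to correct: your final paragraph attributes the terms $K^{H_{\varepsilon,0,1}}$, $K^{H_{\varepsilon,0,2}}$ to the $r^{2\varepsilon}$ contribution in the pointwise bounds of Lemma~\ref{lem}; that contribution is already absorbed into the denominator $1+r^{4(H-\varepsilon)}$ of Corollary~\ref{tec}, so it is not the source of those terms --- they come from the weight manipulation just described, and in your own route they simply do not arise. This is a misattribution, not a gap. The remaining issues you flag (rebalancing the various $\varepsilon$'s so that the shifted applications of Corollary~\ref{tec} land on the exponent $4(H_0-\varepsilon)$, and the implicit Cauchy--Schwarz on the large-$\xi$ piece) are indeed pure bookkeeping and are handled the same way in the paper.
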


\begin{proof}
The first inequality is a straight consequence of Corollary \ref{tec} :
\begin{eqnarray*}
\big| L^{H,(m,m)}_{t,t}(\eta) \big|&=&\Bigg| \frac{1}{|\eta_1|^{2H_1-1} |\eta_2|^{2H_2-1}} \int_{(\xi,\eta)\in D^{m} } d\xi \, \frac{|\ga_{t}(\xi,|\eta|)|^2}{|\xi|^{2H_0-1}}\Bigg|  \\
&\lesssim& \frac{1}{|\eta_1|^{2H_1-1} |\eta_2|^{2H_2-1}} \int_{\mathbb{R} } d\xi \, \frac{|\ga_{t}(\xi,|\eta|)|^2}{|\xi|^{2H_0-1}}\lesssim K^{H_{\varepsilon,0}}(\eta).
\end{eqnarray*}
The second one is a bit more technical. Recall that one has
$$ L^{H,((n,m),m)}_{(s,t),u}(\eta) := \frac{1}{|\eta_1|^{2H_1-1} |\eta_2|^{2H_2-1}} \int_{(\xi,\eta)\in D^{n,m} \cap D^{m}} d\xi \, \frac{\ga_{s,t}(\xi,|\eta|) \overline{\ga_{u}(\xi,|\eta|)}}{|\xi|^{2H_0-1}}  $$
which leads to
\begin{eqnarray*}
\big| L^{H,((n,m),m)}_{(s,t),u}(\eta)\big| &\lesssim &  \frac{1}{|\eta_1|^{2H_1-1} |\eta_2|^{2H_2-1}} \int_{(\xi,\eta)\in D^{n,m}} d\xi \, \frac{|\ga_{s,t}(\xi,|\eta|)|| \ga_{u}(\xi,|\eta|)|}{|\xi|^{2H_0-1}}\\
&\lesssim& \frac{1}{|\eta_1|^{2H_1-1} |\eta_2|^{2H_2-1}}\int_{ |\xi| \geq 2^{2n}} d\xi \, \frac{|\ga_{s,t}(\xi,|\eta|)|| \ga_{u}(\xi,|\eta|)|}{|\xi|^{2H_0-1}}\\
&&+ \frac{1}{|\eta_1|^{2H_1-1} |\eta_2|^{2H_2-1}}\mathbbm{1}_{ |\eta| \geq 2^{n}}\int_{\mathbb{R}} d\xi \, \frac{|\ga_{s,t}(\xi,|\eta|)|| \ga_{u}(\xi,|\eta|)|}{|\xi|^{2H_0-1}}.
\end{eqnarray*}
Let us call $\mathbb{I}_{m,n}(s,t,u)$ (resp. $\mathbb{II}_{m,n}(s,t,u)$) the first (resp. the second) integral.
Since $0<~\varepsilon<~H_0$, Corollary \ref{tec} combined with Cauchy-Schwarz inequality entails:
$$\mathbb{I}_{m,n}(s,t,u) \lesssim 2^{-2n\varepsilon}|t-s|^{\frac{\varepsilon}{2}}K^{H_{\varepsilon,0}}(\eta),$$
whereas, keeping in mind the identity $$\{ |\eta|\geq 2^n\} \subset \{  |\eta|\geq 2^n, |\eta_1|\geq \frac{1}{\sqrt{2}}|\eta| \}\cup\{  |\eta|\geq 2^n, |\eta_2|\geq \frac{1}{\sqrt{2}}|\eta| \},$$
it holds
\begin{eqnarray*}
\frac{1}{|\eta_1|^{2H_1-1} |\eta_2|^{2H_2-1}}\mathbbm{1}_{ |\eta| \geq 2^{n}}\lesssim 2^{-2n\varepsilon}\frac{1}{|\eta_1|^{2(H_1-\varepsilon)-1} |\eta_2|^{2H_2-1}}+ 2^{-2n\varepsilon}\frac{1}{|\eta_1|^{2H_1-1} |\eta_2|^{2(H_2-\varepsilon)-1}}
\end{eqnarray*}
which, thanks to Corollary \ref{tec}, immediately implies
$$\mathbb{II}_{m,n}(s,t,u)\lesssim 2^{-n\varepsilon}|t-s|^{\varepsilon} \big\{K^{H_{\varepsilon,0,1}}(\eta)+K^{H_{\varepsilon,0,2}}(\eta) \big\}.$$
\end{proof}

The lemma below brings back computations on compact domains and its proof can be found in \cite[Lemma 2.6]{deya3}.
\begin{lemma}\label{lem:handle-chi-correc}
Let $\chi:\R^d \to \R$ be a test function and fix $\si\in \R$. Then, for every $p\geq 1$ and for all $\eta_1,\ldots,\eta_p\in \R^d$, it holds that
$$\bigg| \int_{\R^d} dx \, \prod_{i=1}^p \int_{(\R^d)^2} \frac{d\la_i d\lati_i}{\{1+|\la_i|^2\}^{\frac{\si}{2}}\{1+|\lati_i|^2\}^{\frac{\si}{2}}} e^{\imath \langle x,\la_i-\lati_i\rangle} \widehat{\chi}(\la_i-\eta_i)\overline{\widehat{\chi}(\lati_i-\eta_i)}  \bigg| \lesssim \prod_{i=1}^p \frac{1}{\{1+|\eta_i|^2\}^\si} \, ,$$
where the proportional constant only depends on $\chi$ and $\si$. 
\end{lemma}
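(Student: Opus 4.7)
The plan is to reduce the multiple integral to a product involving a single Fourier-type object $G_\eta$, and then to combine an $L^\infty$ bound with an $L^2$ bound for $G_\eta$.

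As a first step, I would observe that for each index $i$ the $\la_i$-- and $\lati_i$--integrals factor, and since $\si$ is real, the $\lati_i$--integral is the complex conjugate of the $\la_i$--integral. Setting
\[
G_\eta(x) := \int_{\R^d} \frac{\widehat{\chi}(\la-\eta)}{\{1+|\la|^2\}^{\si/2}}\,e^{\imath\langle x,\la\rangle}\,d\la ,
\]
the integrand of the displayed expression collapses to $\prod_{i=1}^p |G_{\eta_i}(x)|^2$, so the lemma reduces to establishing
\[
\int_{\R^d}\prod_{i=1}^p |G_{\eta_i}(x)|^2\,dx \ \lesssim \ \prod_{i=1}^p \{1+|\eta_i|^2\}^{-\si} .
\]

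Next, I would prove two bounds on $G_\eta$, both relying on Peetre's inequality $\{1+|\la|^2\}^{-\si/2}\leq C_\si\{1+|\eta|^2\}^{-\si/2}\{1+|\la-\eta|^2\}^{|\si|/2}$ (valid for any $\si\in\R$ with $|\si|$ taking care of the sign). For the pointwise bound, moving the modulus inside and changing variables $\mu:=\la-\eta$ gives
\[
|G_\eta(x)|\leq \int_{\R^d} \frac{|\widehat{\chi}(\la-\eta)|}{\{1+|\la|^2\}^{\si/2}}\,d\la \ \lesssim \ \{1+|\eta|^2\}^{-\si/2}\int_{\R^d} |\widehat{\chi}(\mu)|\{1+|\mu|^2\}^{|\si|/2}\,d\mu ,
\]
where the last integral is finite because $\widehat{\chi}$ is Schwartz. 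For the $L^2$ bound, I would note that $G_\eta$ is (up to a constant) the inverse Fourier transform of the function $\la\mapsto \{1+|\la|^2\}^{-\si/2}\widehat{\chi}(\la-\eta)$; then Plancherel followed by Peetre yields
\[
\int_{\R^d}|G_\eta(x)|^2\,dx \ \lesssim \ \int_{\R^d} \frac{|\widehat{\chi}(\la-\eta)|^2}{\{1+|\la|^2\}^{\si}}\,d\la \ \lesssim \ \{1+|\eta|^2\}^{-\si} ,
\]
again using the Schwartz decay of $\widehat{\chi}$ to bound the weighted integral uniformly in $\eta$.

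Finally, I would combine the two estimates by bounding $p-1$ of the factors pointwise in $L^\infty$ and keeping the remaining one to integrate via the $L^2$ bound, giving
\[
\int_{\R^d}\prod_{i=1}^p|G_{\eta_i}(x)|^2\,dx \leq \Bigg(\prod_{i=2}^p\|G_{\eta_i}\|_\infty^2\Bigg)\|G_{\eta_1}\|_{L^2(\R^d)}^2 \lesssim \prod_{i=1}^p\{1+|\eta_i|^2\}^{-\si} ,
\]
which is exactly the required bound. The only point requiring care is the uniform applicability of Peetre's inequality for arbitrary real $\si$ via the absolute value $|\si|$ on the $\la-\eta$ side; once this is in place the remainder is essentially mechanical, with no subtle cancellation to track.
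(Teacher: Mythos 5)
Your proof is correct: for each fixed $x$ the inner double integral converges absolutely and factors into $G_{\eta_i}(x)\overline{G_{\eta_i}(x)}=|G_{\eta_i}(x)|^2$ (using that $\si$ is real), the two Peetre-based estimates $\|G_\eta\|_{L^\infty}\lesssim \{1+|\eta|^2\}^{-\si/2}$ and $\|G_\eta\|_{L^2}^2\lesssim \{1+|\eta|^2\}^{-\si}$ both hold because $\widehat{\chi}$ is Schwartz, and the final $L^\infty$--$L^2$ splitting (which degenerates harmlessly to the pure $L^2$ bound when $p=1$) yields exactly the claimed estimate. The paper itself gives no proof of this lemma --- it simply cites \cite[Lemma 2.6]{deya3} --- so there is no internal argument to compare against; your argument is the standard one for this type of statement and stands as a correct, self-contained proof.
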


We end this subsection by a highly technical lemma that permits us to construct the second order stochastic process $\<Psi2>$ when $d=2$ in the roughest case.
\begin{lemma}\label{lem:tech-ordre-deux}
For all $H=(H_0,H_1,H_2), \Hti=(\Hti_0,\Hti_1,\Hti_2) \in (0,1)^3$ verifying
\begin{equation}\label{constraint-h-i-lem}
0<H_1, \Hti_1<\frac34 \quad , \quad 0<H_2,\Hti_2< \frac34 \quad , \quad \frac{3}{2} < 2H_0+H_1+H_2 \leq  \frac74 \quad ,\quad \frac{3}{2} < 2\Hti_0+\Hti_1+\Hti_2 \leq  \frac74  \ ,
\end{equation} 
and any
\begin{equation}\label{tech-cond-al}
\al\in \big(\max\big(2-(2H_0+H_1+H_2),2-(2\Hti_0+\Hti_1+\Hti_2)\big),\frac12\big) \ ,
\end{equation}
it holds that
$$
\iint_{(\R^2)^2} \frac{d\eta d\etati}{\{1+|\eta-\etati|^2\}^{2\al}} K^H(\eta) K^{\Hti}(\etati) \ < \ \infty\ .
$$
\end{lemma}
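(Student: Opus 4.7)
The plan is to split $(\R^2)^2$ into regions based on the sizes of $|\eta|$, $|\etati|$, and $|\eta-\etati|$, and to bound the integral on each region separately, using local integrability and decay of $K^H$ and $K^{\Hti}$. Throughout, I would set $a := 2H_0+H_1+H_2$ and $b := 2\Hti_0+\Hti_1+\Hti_2$, so that the hypotheses read $a, b \in (3/2, 7/4]$ and $\al \in (\max(2-a, 2-b),\, 1/2)$.

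First I would record two preliminary facts via polar coordinates. The function $K^H$ belongs to $L^1_{\mathrm{loc}}(\R^2)$: the angular integral $\int_0^{2\pi}|\cos\theta|^{1-2H_1}|\sin\theta|^{1-2H_2}\, d\theta$ is finite because $H_1, H_2 < 1$ gives $1-2H_i > -1$, and the radial integral $\int_0^R r^{3-2H_1-2H_2}\, dr$ is finite. Moreover $K^H \in L^2(\R^2)$: local $L^2$-integrability requires $2-4H_i > -1$, i.e.\ $H_i < 3/4$ (exactly the standing assumption), while the tail reduces to $\int_1^\infty r^{3-4H_1-4H_2-8H_0}\, dr$, finite iff $a > 1$. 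Analogous facts hold for $K^{\Hti}$.

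With these in hand, I would split the integral into three pieces. On the bounded region $\{|\eta|, |\etati|\leq 2\}$ the kernel is $\leq 1$ and the integral factorizes into a product of two finite local $L^1$-integrals. On the mixed region $\{|\eta|\leq 2,\, |\etati|>2\}$ (and symmetrically), we have $|\eta-\etati| \sim |\etati|$, so the kernel is $\lesssim (1+|\etati|)^{-4\al}$; polar coordinates in $\etati$ then reduce the question to $\int_2^\infty r^{3-2b-4\al}\, dr < \infty$, equivalent to $\al > 1 - b/2$, which is implied by $\al > 2-b$ (using $b<2$).

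The delicate case is $\{|\eta|, |\etati|>2\}$, which I would further split via the change of variables $u := \eta-\etati$. For $|u|\leq 1$, applying Cauchy--Schwarz to the inner integral in $\etati$ and using $K^H, K^{\Hti} \in L^2$ bounds the inner integral by $\|K^H\|_{L^2}\|K^{\Hti}\|_{L^2}$, after which $\int_{|u|\leq 1}(1+|u|^2)^{-2\al}\, du$ is trivially finite. For $|u|>1$, I would subdivide further according to whether $|u| \gtrsim \max(|\eta|, |\etati|)$, in which case the kernel alone yields decay comparable to $\max(|\eta|, |\etati|)^{-4\al}$, or $|u| \ll \max(|\eta|, |\etati|)$, which forces $|\eta| \sim |\etati|$ and converts the analysis into a diagonal radial estimate. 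In either sub-case, after polar coordinates the exponent conditions boil down to $\al > 2-(a+b)/2$ at infinity (which follows from $\al > \max(2-a, 2-b)$ by averaging) and $\al < 1/2$ for the local singularity of the kernel in dimension two. The main obstacle will be the lack of radial symmetry of $K^H$, which makes tracking the axial factors $|\eta_i|^{1-2H_i}$ after translation by $u$ delicate; I plan to address this via the weighted AM--GM inequality $|\eta|^{4H_0}\geq |\eta_1|^{4H_0\theta_1}|\eta_2|^{4H_0\theta_2}$ (for any $\theta_1+\theta_2=1$ with $\theta_i\geq 0$), which produces a tensor-product majorant of $K^H$ on $\{|\eta|\geq 1\}$ and allows the remaining estimates to be carried out one coordinate at a time.
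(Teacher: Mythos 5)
You should first note that the paper does not actually prove this lemma: its ``proof'' is a one-line reference to Lemma 3.3 of \cite{deya-wave-2}, so your proposal is being measured against the argument that reference carries out (a coordinate-wise tensorization of $K^H$ and of the kernel, reducing everything to one-dimensional convolution integrals). Your treatment of the easy regions is fine: local integrability of $K^H$, the bounded region, the mixed region $\{|\eta|\leq 2,\ |\etati|>2\}$, and the near-diagonal region via Cauchy--Schwarz and $K^H\in L^2$ are all correct. Two slips there, though. First, the tail exponent for $\|K^H\|_{L^2}^2$ is $r^{5-4(2H_0+H_1+H_2)}$ (you dropped the contribution of the numerator's two powers of $r$... more precisely you wrote $3-4a$ instead of $5-4a$), so the correct threshold is $a>\frac32$, not $a>1$; this matters because $a>\frac32$ is exactly the standing hypothesis (and is sharp, by Proposition \ref{prop:limite-case}), so your computation misidentifies where that hypothesis is consumed. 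Second, $\{1+|\eta-\etati|^2\}^{-2\al}$ has no local singularity whatsoever, so the condition $\al<\frac12$ is not ``for the local singularity of the kernel''; since the integral is nonincreasing in $\al$, the upper bound on $\al$ plays no role in this lemma at all.

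The genuine gap is your Case B: $|\eta|\sim|\etati|$ both large and $|\eta-\etati|>1$. Your claim that ``the exponent conditions boil down to $\al>2-(a+b)/2$ at infinity'' is only the radially averaged heuristic obtained by pretending $K^H(\eta)\approx|\eta|^{2-2a}$; it ignores the axial singularities $|\eta_i|^{1-2H_i}$, which is precisely the difficulty you yourself flag at the end. The weighted AM--GM majorization $1+|\eta|^{4H_0}\gtrsim(1+|\eta_1|)^{4H_0\te_1}(1+|\eta_2|)^{4H_0\te_2}$ together with $\{1+|\eta-\etati|^2\}^{2\al}\geq\prod_i\{1+|\eta_i-\etati_i|^2\}^{\al\mu_i}$ is indeed the right tool, but after tensorizing, each coordinate $i$ produces a one-dimensional integral $\iint dx\,dy\,(1+|x|)^{-A_i}(1+|y|)^{-\tilde A_i}(1+|x-y|)^{-C_i}$ with $A_i=2H_i+4H_0\te_i-1$, $\tilde A_i=2\Hti_i+4\Hti_0\tilde\te_i-1$, and convergence requires \emph{simultaneously}, for both $i=1$ and $i=2$, the conditions $A_i+\tilde A_i>1$, $A_i+C_i>1$, $\tilde A_i+C_i>1$, $A_i+\tilde A_i+C_i>2$ (plus local integrability, which is where $H_i,\Hti_i<\frac34$ enters). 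Only the \emph{sums} over $i$ of these constraints reduce to $a+b>3$ and $\al>2-(a+b)/2$; the lemma's content is that the weights $\te_i,\tilde\te_i,\mu_i\in[0,1]$ can be chosen so that every individual constraint holds, and that verification --- a nonempty-intersection argument for several linear inequalities in the weights, using $2H_0+H_1+H_2>\frac32$, $2\Hti_0+\Hti_1+\Hti_2>\frac32$ and $\al>\max(2-a,2-b)$ --- is entirely absent from your proposal. As written, ``carry out the remaining estimates one coordinate at a time'' asserts the conclusion of the hard step rather than proving it.
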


\begin{proof}

\

\noindent
It is a simple adaptation of the proof of \cite[Lemma 3.3]{deya-wave-2}.
\end{proof}

\subsection{Estimation of the constant $c_1$ from Proposition \ref{equi}}
Let us compute the value of $c_1$. By developing the integrand in Taylor series, we write
\begin{align}
\int_0^1 \frac{x^{\al+\kappa-1}}{1+x^2}dx+\int_1^{+\infty} \frac{x^{\al-1}}{1+x^2}dx&=\int_0^1 \frac{x^{\al+\kappa-1}}{1+x^2}dx+\int_0^{1} \frac{x^{1-\al}}{1+x^2}dx\nonumber\\
&=\sum_{n=0}^{+\infty}(-1)^n \int_0^1 x^{2n+\al+\ka-1}dx +\sum_{n=0}^{+\infty}(-1)^n \int_0^1 x^{2n+1-\al}dx \nonumber\\
&=\sum_{n=0}^{+\infty}\frac{(-1)^n}{2n+\al+\ka}+\sum_{n=0}^{+\infty}\frac{(-1)^n}{2n+2-\al}\nonumber\\
&=\sum_{n=0}^{+\infty}\frac{2}{(4n+\al+\ka)(4n+2+\al+\ka)}+\sum_{n=0}^{+\infty}\frac{2}{(4n+2-\al)(4n+4-\al)}\nonumber\\
&=\frac{1}{8}\sum_{n=0}^{+\infty}\frac{1}{(n+\frac{\al+\ka}{4})(n+\frac{\al+\ka+2}{4})}+\frac{1}{8}\sum_{n=0}^{+\infty}\frac{1}{(n+\frac{2-\al}{4})(n+1-\frac{\al}{4})}. \label{laste}
\end{align}

\noindent
To continue the computation, we need to introduce two well-known special functions, namely:
\begin{definition}
The Gamma function $\Gamma$ and the digamma function $\Psi$ are defined for all $x>0$ by the formulas
$$\Gamma(x)=\int_0^{+\infty}e^{-t}t^{x-1}dt, \quad \Psi(x)=\frac{\Gamma'(x)}{\Gamma(x)}.$$
\end{definition}

\noindent
We are now in a position to recall the following classical result from analytic number theory:

\begin{lemma}\label{gamma}
For all $a,b>0$, it holds that:
$$\sum_{n=0}^{+\infty}\frac{1}{(n+a)(n+b)}=\frac{\Psi(b)-\Psi(a)}{b-a}.$$
\end{lemma}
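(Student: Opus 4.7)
The plan is to reduce the identity to the standard series representation of the digamma function, which is the natural tool given that $\Psi$ appears on the right-hand side. First I would observe that, for $a\neq b$, partial fractions give
$$\frac{1}{(n+a)(n+b)}=\frac{1}{b-a}\left(\frac{1}{n+a}-\frac{1}{n+b}\right),$$
so that the left-hand side of the claim can be rewritten, after summing and using absolute convergence (the general term is $O(1/n^{2})$), as
$$\sum_{n=0}^{+\infty}\frac{1}{(n+a)(n+b)}=\frac{1}{b-a}\sum_{n=0}^{+\infty}\left(\frac{1}{n+a}-\frac{1}{n+b}\right).$$

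Next, I would invoke the classical Weierstrass-type series representation of the digamma function, namely
$$\Psi(x)=-\gamma+\sum_{n=0}^{+\infty}\left(\frac{1}{n+1}-\frac{1}{n+x}\right),\qquad x>0,$$
where $\gamma$ denotes the Euler-Mascheroni constant (this expansion is standard and can be derived, for instance, from the Weierstrass product for $\Gamma$ by logarithmic differentiation). Subtracting the expansions for $\Psi(b)$ and $\Psi(a)$ — which is legal because each partial sum is a finite expression and the term $-\gamma+\sum 1/(n+1)$ cancels — yields
$$\Psi(b)-\Psi(a)=\sum_{n=0}^{+\infty}\left(\frac{1}{n+a}-\frac{1}{n+b}\right).$$
Combining this with the partial-fraction rewriting above gives exactly
$$\sum_{n=0}^{+\infty}\frac{1}{(n+a)(n+b)}=\frac{\Psi(b)-\Psi(a)}{b-a}.$$

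There is essentially no serious obstacle: the only point that requires a line of justification is the rearrangement between the two conditionally arranged sums, and this is handled by noting that both $\sum 1/[(n+a)(n+b)]$ and the paired sum $\sum\bigl(1/(n+a)-1/(n+b)\bigr)$ have the same absolutely convergent grouping (the $n$-th paired term equals $(b-a)/[(n+a)(n+b)]$). The hypothesis $a,b>0$ ensures no denominators vanish, so the argument is uniform on compact subsets of $\{(a,b):a,b>0\}$, and the case $a=b$ is recovered (if ever needed) by passing to the limit and obtaining the trigamma identity $\sum 1/(n+a)^{2}=\Psi'(a)$.
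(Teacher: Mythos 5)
Your proof is correct. The paper does not actually prove this lemma --- it simply states it as ``a classical result from analytic number theory'' and moves on --- so there is no argument to compare against; your derivation via partial fractions combined with the standard series representation $\Psi(x)=-\gamma+\sum_{n\geq 0}\bigl(\tfrac{1}{n+1}-\tfrac{1}{n+x}\bigr)$ is exactly the textbook route and correctly fills the omitted detail. Your side remark that the case $a=b$ must be read as a limit (giving $\sum_{n\geq 0}(n+a)^{-2}=\Psi'(a)$) is a fair observation, since the right-hand side as written is $0/0$ there; in the paper's application the two arguments are distinct, so nothing is lost.
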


\noindent
Coming back to (\ref{laste}), with the help of Lemma \ref{gamma}, we deduce
$$\int_0^1 \frac{x^{\al+\kappa-1}}{1+x^2}dx+\int_1^{+\infty} \frac{x^{\al-1}}{1+x^2}dx=\frac{1}{4}\Big[\Psi\left(\frac{\al+\ka+2}{4}\right)-\Psi\left(\frac{\al+\ka}{4}\right)+\Psi\left(\frac{4-\al}{4}\right)-\Psi\left(\frac{2-\al}{4}\right)\Big]$$
and, finally, for your viewing pleasure,
$$c_1=\frac{1}{4\ka}\Bigg[\frac{\Gamma'(\frac{\al+\ka+2}{4})}{\Gamma(\frac{\al+\ka+2}{4})}+\frac{\Gamma'(\frac{4-\al}{4})}{\Gamma(\frac{4-\al}{4})}-\frac{\Gamma'(\frac{\al+\ka}{4})}{\Gamma(\frac{\al+\ka}{4})}-\frac{\Gamma'(\frac{2-\al}{4})}{\Gamma(\frac{2-\al}{4})}\Bigg].$$

\end{document}